\documentclass[a4paper,reqno,10.5pt, oneside]{amsart}

\usepackage{amssymb}
\usepackage{amstext}
\usepackage{amsmath}
\usepackage{amscd}
\usepackage{amsthm}
\usepackage{amsfonts}
\usepackage{enumerate}
\usepackage{graphicx}
\usepackage{latexsym}
\usepackage{mathrsfs}
\usepackage{mathtools}
\usepackage{here}

\usepackage{tikz}
\usepackage{tikz-cd}
\usetikzlibrary{arrows}
\usetikzlibrary{decorations.markings}
\usetikzlibrary{patterns,decorations.pathreplacing}

\makeatletter
\pgfdeclarepatternformonly[\LineSpace]{my north west lines}{\pgfqpoint{-1pt}{-1pt}}{\pgfqpoint{\LineSpace}{\LineSpace}}{\pgfqpoint{\LineSpace}{\LineSpace}}
{
    \pgfsetcolor{\tikz@pattern@color} 
    \pgfsetlinewidth{0.6pt}
    \pgfpathmoveto{\pgfqpoint{0pt}{\LineSpace}}
    \pgfpathlineto{\pgfqpoint{\LineSpace + 0.1pt}{- 0.1pt}}
    \pgfusepath{stroke}
}
\makeatother 
\newdimen\LineSpace
\tikzset{
    line space/.code={\LineSpace=#1},
    line space=8.5pt
}

\usepackage{hyperref}

\newtheorem{theorem}{Theorem}[section]

\newtheorem{corollary}[theorem]{Corollary}
\newtheorem{lemma}[theorem]{Lemma}
\newtheorem{proposition}[theorem]{Proposition}
\newtheorem{definition-proposition}[theorem]{Definition-Proposition}

\theoremstyle{definition}
\newtheorem{definition}[theorem]{Definition}
\newtheorem{example}[theorem]{Example}
\newtheorem{observation}[theorem]{Observation}

\newtheorem{settings}[theorem]{Settings}
\newtheorem{notation}[theorem]{Notation}

\newtheorem{remark}[theorem]{Remark}

\makeatletter
  
  \@addtoreset{equation}{section}
\makeatother

\newcommand{\rmb}{\mathrm{b}}

\newcommand{\rmD}{\mathrm{D}}

\newcommand{\rmV}{\mathrm{V}}

\newcommand{\rmX}{\mathrm{X}}
\newcommand{\rmY}{\mathrm{Y}}

\newcommand{\calA}{\mathcal{A}}

\newcommand{\calC}{\mathcal{C}}
\newcommand{\calD}{\mathcal{D}}

\newcommand{\calG}{\mathcal{G}}
\newcommand{\calH}{\mathcal{H}}

\newcommand{\calL}{\mathcal{L}}
\newcommand{\calM}{\mathcal{M}}
\newcommand{\calN}{\mathcal{N}}

\newcommand{\calP}{\mathcal{P}}

\newcommand{\calT}{\mathcal{T}}
\newcommand{\calU}{\mathcal{U}}

\newcommand{\calW}{\mathcal{W}}

\newcommand{\fkp}{\mathfrak{p}}

\newcommand{\ZZ}{\mathbb{Z}}
\newcommand{\QQ}{\mathbb{Q}}
\newcommand{\RR}{\mathbb{R}}

\newcommand{\TT}{\mathbb{T}}

\newcommand{\sfM}{\mathsf{M}}
\newcommand{\sfN}{\mathsf{N}}

\newcommand{\bfa}{\mathbf{a}}
\newcommand{\bfb}{\mathbf{b}}
\newcommand{\bfv}{\mathbf{v}}

\newcommand{\kk}{\Bbbk}

\newcommand{\Spec}{\operatorname{Spec}}

\newcommand{\Hom}{\operatorname{Hom}}

\newcommand{\Ker}{\operatorname{Ker}}

\newcommand{\GL}{\operatorname{GL}}

\newcommand{\Sym}{\operatorname{Sym}}

\newcommand{\Cl}{\operatorname{Cl}}

\newcommand{\End}{\operatorname{End}}
\newcommand{\gldim}{\mathrm{gl.dim}}
\newcommand{\pdim}{\mathrm{proj.dim}}

\newcommand{\add}{\mathsf{add}}

\newcommand{\mc}{\mathsf{mod}}
\newcommand{\refl}{\mathsf{ref}}
\newcommand{\coh}{\mathsf{coh}}

\setlength{\textwidth}{16cm} 
\setlength{\textheight}{24cm}
\setlength{\oddsidemargin}{0cm} 
\setlength{\topmargin}{0cm} 
\setlength{\headheight}{0cm} 
\setlength{\headsep}{1cm} 
\setlength{\footskip}{1cm}

\begin{document}

\title[NCCRs of Hibi rings with small class group]{Non-commutative crepant resolutions of Hibi rings \\ with small class group}  

\author[Y. Nakajima]{Yusuke Nakajima} 

\address[Y. Nakajima]{Kavli Institute for the Physics and Mathematics of the Universe (WPI), UTIAS, The University of Tokyo, Kashiwa, Chiba 277-8583, Japan} 
\email{yusuke.nakajima@ipmu.jp}


\subjclass[2010]{Primary 13C14, 16S38; Secondary 06A11, 14M25, 14E15.}
\keywords{Non-commutative crepant resolutions, Hibi rings, class groups} 

\maketitle

\begin{abstract} 
In this paper, we study splitting (or toric) non-commutative crepant resolutions (= NCCRs) of some toric rings. 
In particular, we consider Hibi rings, which are toric rings arising from partially ordered sets, and show that Gorenstein Hibi rings with class group $\ZZ^2$ have a splitting NCCR. 

In the appendix, we also discuss Gorenstein toric rings with class group $\ZZ$, in which case the existence of splitting NCCRs is already known. 
We especially observe the mutations of modules giving splitting NCCRs for the three dimensional case, and show the connectedness of the exchange graph. 
\end{abstract}



\section{\bf Introduction} 
\label{sec_intro}

In this paper, we discuss non-commutative crepant resolutions for some toric rings. Thus, we first recall the notion of non-commutative crepant resolutions  defined by Van den Bergh \cite{VdB2}. 

\begin{definition}
\label{def_NCCR}
Let $R$ be a Cohen-Macaulay (= CM) normal domain, and $M$ be a non-zero reflexive $R$-module. Let $\Lambda\coloneqq\End_R(M)$. 
We say that $\Lambda$ is a \emph{non-commutative crepant resolution} (= \emph{NCCR}) of $R$ 
(or $M$ \emph{gives an NCCR} of $R$) if $\gldim \Lambda_\mathfrak{p}=\dim R_\mathfrak{p}$ for all $\mathfrak{p}\in\Spec R$ and $\Lambda$ is a maximal Cohen-Macaulay (= MCM) $R$-module.
\end{definition}

\begin{remark}
\label{rem_NCCR}
We note some remarks concerning the definition of NCCRs. 
\begin{enumerate}[\rm (a)]
\item When $R$ is a Gorenstein normal domain, we can relax the above definition. 
In such a situation, $\Lambda$ is an NCCR of $R$  if and only if $\gldim \Lambda<\infty$ and $\Lambda$ is an MCM $R$-module (see e.g., \cite[Lemma~4.2]{VdB2}, \cite[Lemma~2.23]{IW1}). 
\item It was shown in \cite{DITW} that if a CM normal domain $R$ has an NCCR, then $R$ is $\QQ$-Gorenstein. 
Thus, if the class group of $R$ is a free abelian group and $R$ is not Gorenstein, then $R$ does not have an NCCR. 
In this paper, we will mainly consider an NCCR of a CM normal domain $R$ whose class group is free abelian, thus we often assume that $R$ is Gorenstein. 
\item In addition, we say that an NCCR $\End_R(M)$ is \emph{splitting} if $M$ is a finite direct sum of rank one reflexive $R$-modules. 
A splitting NCCR is also called ``\emph{toric NCCR}" when $R$ is a toric ring (see \cite{Boc2}). 
\end{enumerate}
\end{remark}

Some NCCRs can be obtained as the endomorphism ring of a tilting bundle on a crepant resolution $Y$ of $\Spec R$, in which case a tilting bundle $\calT$ induces a derived equivalence $\rmD^\rmb(\coh Y)\cong\rmD^\rmb(\mc\End(\calT))$. 
By this equivalence we can consider an NCCR as a non-commutaive analogue of a crepant resolution, and it gives an interaction between birational geometry and representation theory of algebras. 
From the viewpoint of representation theory, NCCRs are also related with cluster tilting theory, higher dimensional Auslander-Reiten theory etc (see \cite{Iya,IR,IW1} for example). 
One of the important problems concerning NCCRs is the existence of an NCCR for a given singularity. 
For some nice cases, NCCRs have been constructed in several literatures, see e.g., \cite{Bro,BLVdB,Har,HN,IU,IW1,IW2,SpVdB,VdB2} and the survey article \cite{Leu}. 
In this paper, we especially focus on NCCRs of toric rings. The existence of NCCRs is known for several toric rings. 
For example, the case of quotient singularities associated with a finite abelian subgroup in $\GL(\kk,d)$ \cite{VdB2,IW1}, Gorenstein toric rings whose class group is $\ZZ$ \cite{VdB2} (see also Appendix~\ref{sec_dimer}), 
$3$-dimensional Gorenstein toric rings \cite{Bro,IU,SpVdB_toric2}, and some higher dimensional Gorenstein toric rings \cite{SpVdB,HN}. 
In general, the existence of NCCRs for toric rings is still open. 

On the other hand, NCCRs of toric rings mentioned above are splitting ones. 
Thus, we hoped that any toric ring has a splitting NCCR, but recently \v{S}penko and Van den Bergh gave the following example. 

\begin{proposition}[see {\cite[Example~3.3]{SpVdB_toric1}}]
Let $G\coloneqq(\kk^\times)^2$ be the two dimensional algebraic torus and $S\coloneqq\kk[x_1,\cdots,x_6]$ be a polynomial ring.  
We consider the action of $G$ on $S$ defined by $g\cdot x_i\coloneqq \beta_i(g)x_i$ for $g\in G$ and $i=1,\cdots,6$, 
in which case $R\coloneqq S^G$ is a toric ring with $\dim R=4$ and the class group $\Cl(R)$ is isomorphic to $\ZZ^2$. 
Here, $\beta_i$ is the character of $G$ corresponding to the weight $(1, 1), (-1, 0), (0,-1), (-3,-3), (3, 0), (0, 3)$ respectively (see Subsection~\ref{sec_preliminary_toric} for more details). 
Then, such a toric ring $R$ has an NCCR but does not have any splitting NCCR. 
\end{proposition}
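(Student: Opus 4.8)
The plan is to make the toric description explicit and then treat the two assertions separately, the non-existence of a splitting NCCR being the substantive point.

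Write $B\colon\ZZ^6\to\ZZ^2$ for the weight matrix with columns $\beta_1,\dots,\beta_6$, so that
$$R=S^G=\kk[\calS],\qquad \calS=\ker(B)\cap\ZZ_{\ge0}^6=\Big\{a\in\ZZ_{\ge0}^6:\textstyle\sum_{i=1}^6 a_i\beta_i=0\Big\}$$
is the ring of a saturated affine monoid. As $\rank B=2$ we get $\dim R=4$, and as $\beta_1,\beta_2$ already form a $\ZZ$-basis of $\ZZ^2$ the weights generate the character lattice, giving $\Cl(R)\cong\ZZ^2$. Two structural features I would record at the outset: $\beta_1+\beta_2+\beta_3=0$ together with $\{\beta_4,\beta_5,\beta_6\}=\{-3\beta_1,-3\beta_2,-3\beta_3\}$, so the full multiset of weights sums to $0$ and hence $R$ is Gorenstein (by Remark~\ref{rem_NCCR}(a) an NCCR is then just an MCM endomorphism ring of finite global dimension); yet the multiset is visibly not stable under $\beta\mapsto-\beta$, the line through $(1,1)$ carrying $\beta_1$ and $\beta_4=-3\beta_1$ with nonzero sum. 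The rank-one reflexive modules are the conic divisorial modules $M_\chi$, $\chi\in\Cl(R)=\ZZ^2$, and a splitting NCCR is precisely one of the form $\End_R\big(\bigoplus_{\chi\in\calC}M_\chi\big)$ for a finite $\calC\subset\ZZ^2$.

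For the existence of an NCCR I would keep the argument independent of the splitting question. The failure of the $\beta\mapsto-\beta$ symmetry means the direct ``box'' construction of \v{S}penko--Van den Bergh (which for a torus always yields a \emph{splitting} NCCR, under the quasi-symmetry hypothesis) is unavailable. Instead I would produce a non-splitting tilting object: check whether $\Spec R$ admits a projective crepant resolution $Y$ (equivalently, whether the cross-section polytope of the defining cone carries a unimodular lattice triangulation) and, if so, construct a tilting bundle on $Y$ whose endomorphism ring is an NCCR; the bundle will not be a sum of line bundles, so this NCCR is non-splitting, consistently with what follows. Absent a crepant resolution I would fall back on the explicit module exhibited in the cited source.

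The heart of the matter is ruling out \emph{every} splitting NCCR, and this is the step I expect to be hardest. First reduce to a finite problem. In $\End_R(M)$ with $M=\bigoplus_{\chi\in\calC}M_\chi$ one has $\Hom_R(M_\chi,M_{\chi'})\cong M_{\chi'-\chi}$, so MCM-ness of $\End_R(M)$ forces every difference $\chi'-\chi$ ($\chi,\chi'\in\calC$) to be a \emph{conic} class, i.e. a class $c$ for which $M_c$ is MCM. The crucial finiteness input is that a normal affine monoid has only finitely many conic classes; here they form the explicit set $\mathsf{Con}=\overline{\Sigma}\cap\ZZ^2$, with $\Sigma=\sum_{i=1}^6[0,1)\,\beta_i\subset\RR^2$ the planar zonotope spanned by the weights, which I would enumerate by hand. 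Thus the difference set $\calC-\calC$ lies in the finite set $\mathsf{Con}$, which, up to an overall translation of $\calC$, confines $\calC$ to finitely many configurations.

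Second, pin the size of $\calC$: for a Gorenstein toric ring the number of summands of a splitting NCCR is forced by $R$, equal to its normalized volume (multiplicity) $e(R)$, which I would compute as the normalized volume of the cross-section polytope. Non-existence then reduces to showing that no $e(R)$-element set $\calC\subseteq\mathsf{Con}$ with $\calC-\calC\subseteq\mathsf{Con}$ gives an algebra of finite global dimension -- and I expect the example to be engineered so that either there are too few conic classes to begin with, or the finitely many candidates all fail. The decisive, most delicate step is this global-dimension verification. Rather than compute $\gldim$ directly, I would localize at the codimension-$\ge2$ faces of the cone: an NCCR must localize to an NCCR of the associated lower-dimensional toric localization at each such face, and these local data must be compatible. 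I expect the broken $\beta\mapsto-\beta$ symmetry to obstruct exactly this compatibility -- surfacing as a nonvanishing higher self-extension among the conic modules over one singular face -- and isolating that obstruction, and checking it over all faces, is where the real work lies.
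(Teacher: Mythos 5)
First, a point of calibration: the paper does not prove this proposition at all --- it is quoted from \v{S}penko--Van den Bergh \cite[Example~3.3]{SpVdB_toric1} purely as motivation, so there is no internal proof to compare your sketch against, and it has to be judged on its own terms. Your setup is correct, and you correctly isolate the structural point of the example: the six weights sum to zero (so $R$ is Gorenstein), yet the line through $(1,1)$ carries $\beta_1$ and $\beta_4=-3\beta_1$ with nonzero sum, so $W$ is not quasi-symmetric and \cite[Theorem~1.19]{SpVdB} is unavailable. Beyond that, however, the proposal is a plan rather than a proof, and two of its load-bearing steps are wrong as stated.

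You write that MCM-ness of $\End_R(M)$ forces every difference $\chi'-\chi$ to be ``a conic class, i.e.\ a class $c$ for which $M_c$ is MCM,'' and then confine $\calC-\calC$ to the zonotope $\overline{\Sigma}\cap\ZZ^2$. Conic and MCM are not the same: conic implies MCM but not conversely, and this very paper exhibits many non-conic rank-one MCM classes (Section~\ref{sec_MCM_HibiZ2}). The correct constraint is only that $\calC-\calC$ lies in the finite set of MCM classes, which must be determined separately (e.g.\ by Van den Bergh's criterion, Proposition~\ref{criterion_MCM}); restricting to the conic region makes the enumeration incomplete and the non-existence conclusion unsound. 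Second, the claim that the number of summands of a splitting NCCR equals the normalized volume $e(R)$ is false in dimension $\ge 4$: the paper's own type (V) Hibi ring with $n=0$ is the four-dimensional cone over the Segre embedding of $\mathbb{P}^1\times\mathbb{P}^1\times\mathbb{P}^1$, of multiplicity $6$, whose splitting NCCR has $4$ summands. (The coincidence you have in mind is special to dimension $3$, where the summand count is the number of faces of a consistent dimer model, i.e.\ twice the area of the lattice polygon.) Whether the summand count is even an invariant of $R$ here would rest on derived equivalence of all NCCRs, which is not known in this generality. Finally, the two genuinely hard steps --- producing the (necessarily non-splitting) NCCR, where note that a four-dimensional Gorenstein toric singularity need not admit any crepant resolution, so the tilting-bundle route cannot be assumed; and verifying that every surviving candidate $\calC$ fails to yield finite global dimension --- are both deferred. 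As it stands the sketch identifies the right battlefield but does not fight the battle.
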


Thus, it is interesting to ask when a Gorenstein toric ring $R$ with $\Cl(R)\cong\ZZ^2$ has a splitting NCCR. 
In this paper, we consider Hibi rings which are toric rings arising from partially ordered sets, and show the existence of a splitting NCCR for the following Hibi rings. 

\begin{theorem}[{see Theorem~\ref{main_thm_HibiZ2}}]
\label{main_intro}
Let $R$ be a Gorenstein Hibi ring with the class group $\Cl(R)\cong\ZZ^2$. Then, $R$ has a splitting NCCR. 
\end{theorem}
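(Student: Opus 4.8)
The plan is to translate the statement entirely into the combinatorics of the underlying poset and then dispose of the resulting (short) list of cases by explicit construction. Write $R=\kk[P]$ for the Hibi ring of a finite poset $P$, and let $\hat{P}=P\cup\{\hat{0},\hat{1}\}$ be $P$ with a minimum and a maximum adjoined. First I would recall the standard structure theory: $R$ is always a normal Cohen--Macaulay toric domain; by Hibi's criterion it is Gorenstein exactly when $P$ is \emph{pure} (every maximal chain of $\hat{P}$ has the same length); and, reading the divisorial structure off the order polytope, $\Cl(R)$ is free abelian of rank equal to the first Betti number of the Hasse diagram of $\hat{P}$, namely $\#\{\text{edges}\}-\#\{\text{vertices}\}+1$ (the facets of the order polytope, hence the height-one primes, are the covering relations of $\hat{P}$, modulo the $|P|+1$ character relations). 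Thus $\Cl(R)\cong\ZZ^2$ forces the Hasse diagram of $\hat{P}$ to carry exactly two independent cycles, and the whole problem becomes: classify the pure posets $P$ whose $\hat{P}$ has cyclomatic number $2$, and for each exhibit a splitting NCCR.

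The second step is this classification. Since $\hat{P}$ is graded and the extra covering relations beyond a spanning tree of its Hasse diagram are confined to a small region, I expect the relevant posets to fall into a short list of explicit families: morally, those obtained by gluing two elementary cycles either \emph{in parallel} (three chains running between a common meet and a common join, the smallest instance being the three-element antichain) or \emph{in series} (two ``diamonds'' stacked along a chain), possibly padded by chains above, below, and in between. I would carry out this enumeration with care, since the construction in the next step proceeds family by family.

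The third step is the construction. A splitting NCCR is $\End_R(M)$ with $M=\bigoplus_{a\in A}M_a$ a direct sum of rank-one reflexive (equivalently, torus-equivariant divisorial) modules indexed by a finite set $A\subset\Cl(R)\cong\ZZ^2$. For each family I would write down an explicit $A$, a two-dimensional ``window'' of divisor classes dictated by the cycle structure of $\hat{P}$, generalizing Van den Bergh's interval of consecutive divisorial ideals in the $\Cl(R)\cong\ZZ$ case treated in the appendix. I would then verify the two defining conditions via the relaxed criterion of Remark~\ref{rem_NCCR}(a): that $\End_R(M)$ is a maximal Cohen--Macaulay $R$-module, and that it has finite global dimension. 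The MCM condition reduces to checking that each $\Hom_R(M_a,M_b)$, which is the reflexive module of class $b-a$, is itself MCM; for Hibi rings this is a combinatorial local-cohomology condition on the divisor class that I would confirm for the chosen window.

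The main obstacle is the finiteness of global dimension. I would attack it by presenting $\End_R(M)$ as the path algebra of an explicit quiver with relations, the arrows being the minimal maps between the $M_a$ coming from covering relations of $\hat{P}$ and the relations coming from the commuting squares, and then proving that this algebra has finite global dimension, either by constructing the projective resolutions of the simple modules directly or by exhibiting the algebra as a sufficiently nice combinatorial (``dimer''-type) algebra whose consistency yields finiteness. The delicate point is that a general Gorenstein toric ring with class group $\ZZ^2$ need \emph{not} admit a splitting NCCR, by the cited example of \v{S}penko--Van den Bergh; hence the argument must genuinely exploit the special shape of the Hibi window, in particular the gradedness and near-planarity of $\hat{P}$, rather than any formal feature of the class group alone.
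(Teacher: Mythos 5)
Your overall architecture — classify the pure posets whose $\widehat{P}$ has cyclomatic number two, choose a finite rectangular ``window'' $A\subset\Cl(R)\cong\ZZ^2$ of divisor classes generalizing the interval of Theorem~\ref{NCCR_Z}, check that all differences $b-a$ land in the set of rank-one MCM classes, and separately prove finiteness of global dimension — is exactly the architecture of the paper (the classification is Lemma~\ref{poset_Z2}, with five families (I)--(V); the windows are the sets $\calL$ of Theorem~\ref{main_thm_HibiZ2}; the MCM check is Section~\ref{sec_MCM_HibiZ2}, carried out via Van den Bergh's criterion, Proposition~\ref{criterion_MCM}, which is indeed the combinatorial local-cohomology condition you anticipate). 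Up to that point your proposal is on track.

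The genuine gap is in your method for the finiteness of global dimension, which is the hardest step. You propose to present $\End_R(M)$ as a quiver with relations and either resolve the simples directly or invoke a dimer-model-type consistency argument. The dimer-model route is not available here: consistent dimer models produce NCCRs only for \emph{three}-dimensional Gorenstein toric rings, whereas the Hibi rings in question have dimension $|P|+1$, which is unbounded within each of the five families (the chains can be padded arbitrarily). Resolving the simples of an explicit quiver with relations over a toric ring of arbitrary dimension is not a procedure you can carry out uniformly in the parameters $\ell,m,n$ without some additional structural input. The paper supplies that input with the \v{S}penko--Van den Bergh machinery (Lemmas~\ref{key_lem1} and~\ref{key_lem2}): one works in the category of $(G,S)$-modules, reduces $\gldim\Lambda_\calL<\infty$ to $\pdim_{\Lambda_\calL}P_{\calL,\chi}<\infty$ for every character $\chi$, and for $\chi$ outside the window chooses a one-parameter subgroup $\lambda\in\rmY(G)_\RR$ separating $\chi$ from $\calL$; the associated Koszul complex $C_{\calL,\lambda,\chi}$ is then acyclic with terms indexed by $\chi+\beta_{i_1}+\cdots+\beta_{i_p}$, and an induction sweeping the character lattice row by row (the sets $\calM_j$ and $\calN_j$ in Subsection~\ref{subsec_proof_typeI}) yields finite projective dimension for all conic classes, which suffices by \cite[Subsection~10.3]{SpVdB}. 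Without this (or an equivalent) mechanism, your outline does not close; with it, the rest of your plan goes through essentially as the paper does.
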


An idea for constructing a splitting NCCR of a toric ring is to consider a nice class of rank one MCM modules, called \emph{conic modules} (see Section~\ref{sec_preliminary}). 
Conic modules were well studied in \cite{Bru,BG1,SmVdB}, and it is known that the number of those is finite up to isomorphism. 
Furthermore, the endomorphism ring of the direct sum of all conic modules has finite global dimension (see \cite[Proposition~1.8]{SpVdB}, \cite[Theorem~6.1]{FMS}), but in general this endomorphism ring is not an MCM module, and hence not an NCCR. 
In particular, taking all conic modules is too big to allow the endomorphism ring to be an MCM module. 
A naive idea for constructing an NCCR is to consider a part of conic modules. 
In fact, an NCCR of a ``quasi-symmetric" toric ring given in \cite{SpVdB}, and an NCCR of the Segre products of polynomial rings given in \cite{HN} are constructed as the endomorphism ring of a part of conic modules. 
In this paper, we will use the same strategy for constructing an NCCR of Gorenstein Hibi rings with the class group $\ZZ^2$, and the resulting NCCR is splitting by this construction. 
In particular, we give the precise description of a module giving a splitting NCCR of these Hibi rings. 
Thus, starting from this module giving an NCCR, we obtain other modules giving NCCRs using the operation which we call the \emph{mutation}. 
More precisely, we choose a direct summand of a module giving an NCCR, and replace the chosen direct summand by another module using the approximation theory of MCM modules. Then, the resulting module also gives an NCCR (see \cite[Section~6]{IW1}, \cite[Section~4]{HN}). 
The mutated module is not necessarily isomorphic to the original one, and we sometimes have infinitely many modules giving NCCRs by repeating the mutations. 

We note that to show a given endomorphism ring $\Lambda$ is an NCCR, we have to show that the global dimension of $\Lambda$ is finite and $\Lambda$ is an MCM module. 
In this paper, Lemma~\ref{key_lem1} and \ref{key_lem2} are main ingredients for showing the finiteness of global dimension. 
Since these hold for any toric ring, the method used in this paper might be valid for constructing NCCRs of other toric rings, 
for example Hibi rings with the class group $\ZZ^m$ where $m>2$. 
However, the combinatorics for checking the assumption of Lemma~\ref{key_lem2} would be complicated in general. 
In addition, when we check whether a given module is an MCM module or not, we often consider the vanishing of the local cohomology, 
and it can be understood by using the combinatorics (see e.g., \cite{VdB1}). However, that combinatorics is also complicated in general. 

\medskip

The content of this paper is the following. In Section~\ref{sec_preliminary}, we prepare some notations regarding toric rings and their divisorial ideals. We then restrict  to Hibi rings and review some known facts.
In Section~\ref{sec_HibiZ2}, we first classify Gorenstein Hibi rings with class group $\ZZ^2$, and then we construct splitting NCCRs for those Hibi rings.
When we construct such a splitting NCCR, we have to check whether a given rank one reflexive module is MCM or not. 
Thus, we will give the complete list of rank one MCM modules for Gorenstein Hibi rings with class group $\ZZ^2$ in Section~\ref{sec_MCM_HibiZ2}. 

As we mentioned, any Gorenstein toric ring with the class group $\ZZ$ admits a splitting NCCR. 
In Appendix~\ref{sec_dimer}, we further study this class and give the precise description of modules giving splitting NCCRs. 
We then restrict to $3$-dimensional Gorenstein toric rings, in which case splitting NCCRs are obtained from consistent dimer models. 
In particular, we show that the exchange graph of the mutations, which is the graph whose vertices are modules giving splitting NCCRs 
and we draw an edge between two vertices if the corresponding modules are transformed into each other by a single mutation, 
is connected for $3$-dimensional Gorenstein toric rings with the class group $\ZZ$. 
This gives a partial affirmative answer to \cite[Question~6.4]{Nak1}. 

\subsection*{Notation and Conventions} 
Throughout this paper, we suppose that $\kk$ is an algebraically closed field. 
For an $R$-module $M$, we denote by $\add_RM$ the category consisting of direct summands of finite direct sums of some copies of $M$. 
We say that $M\in\mc R$ is a \emph{generator} if $R\in\add_R M$. 
We say that $M=\bigoplus_iM_i$ is \emph{basic} if $M_i$'s are mutually non-isomorphic. 

\section{Preliminaries}
\label{sec_preliminary}

\subsection{Preliminaries on toric rings}
\label{sec_preliminary_toric}

In this paper, we mainly study a toric ring $R$ whose class group $\Cl(R)$ is $\ZZ^r$. 
Such an $R$ is described in several ways as we will see below. 
Let $\sfN\cong\ZZ^d$ be a lattice of rank $d$ and $\sfM\coloneqq\Hom_\ZZ(\sfN, \ZZ)$ be the dual lattice of $\sfN$. 
Let $\sfN_\RR\coloneqq\sfN\otimes_\ZZ\RR$ and $\sfM_\RR\coloneqq\sfM\otimes_\ZZ\RR$. 
We denote the natural inner product by $\langle\;,\;\rangle:\sfM_\RR\times\sfN_\RR\rightarrow\RR$. 
Let 
$$
\tau\coloneqq\mathrm{Cone}(\bfv_1, \cdots, \bfv_n)=\RR_{\ge 0}\bfv_1+\cdots +\RR_{\ge 0}\bfv_n
\subset\sfN_\RR 
$$
be a strongly convex rational polyhedral cone of dimension $d$ generated by $\bfv_1, \cdots, \bfv_n\in\ZZ^d$ where $n\ge d$. 
We assume that this system of generators is minimal. 
For each generator $\bfv_i$, we consider the linear form $\sigma_i(-)\coloneqq\langle-, \bfv_i\rangle$, and denote $\sigma(-)\coloneqq(\sigma_1(-),\cdots,\sigma_n(-))$. 
We then consider the dual cone $\tau^\vee$: 
$$
\tau^\vee\coloneqq\{{\bf x}\in\sfM_\RR \mid \sigma_i({\bf x})\ge0 \text{ for all } i=1,\cdots,n \}. 
$$
Using this cone, we define the toric ring 
$$
R\coloneqq \kk[\tau^\vee\cap\sfM]=\kk[t_1^{m_1}\cdots t_d^{m_d}\mid (m_1, \cdots, m_d)\in\tau^\vee\cap\sfM]. 
$$
It is known that $R$ is a $d$-dimensional Cohen-Macaulay (= CM) normal domain. 

Then, for each $\bfa=(a_1, \cdots, a_n)\in\RR^n$, we set 
$$
\TT(\bfa)\coloneqq\{{\bf x}\in\sfM \mid (\sigma_1({\bf x}), \cdots, \sigma_n({\bf x}))\ge(a_1, \cdots, a_n)\}. 
$$
Then, we define the divisorial ideal (rank one reflexive $R$-module) $T(\bfa)$ generated by all monomials whose exponent vector is in $\mathbb{T}(\bfa)$. 
By the definition, we have $T(\bfa)=T(\ulcorner \bfa\urcorner)$, where $\ulcorner \; \urcorner$ means the round up 
and $\ulcorner \bfa\urcorner=(\ulcorner a_1\urcorner, \cdots, \ulcorner a_n\urcorner)$. 
Any divisorial ideal of $R$ takes this form (see e.g., \cite[Theorem~4.54]{BG2}), 
and hence each divisorial ideal is represented by $\bfa\in\ZZ^n$.  
We denote the class group of $R$ by $\Cl(R)$, and in this paper we assume that $\Cl(R)\cong\ZZ^r$. 
It is known that there exists the exact sequence
\begin{equation}
\label{cl_seq}
0\rightarrow\ZZ^d\xrightarrow{\sigma(-)}\ZZ^n\rightarrow\Cl(R)\rightarrow0, 
\end{equation}
thus we see that for $\bfa, \bfa^\prime\in\ZZ^n$, $T(\bfa)\cong T(\bfa^\prime)$ if and only if there exists ${\bf y}\in\sfM$ such that $a_i=a_i^\prime+\sigma_i({\bf y})$ for all $i=1, \cdots, n$ 
(see e.g., \cite[Corollary~4.56]{BG2}). 
Let $\fkp_i\coloneqq T(0,\cdots,0,\overset{i}{\check{1}},0,\cdots,0)$, and consider the prime divisor $\calD_i\coloneqq \rmV(\fkp_i)$ on $\Spec R$. 
Then, a divisorial ideal $T(\bfa)=T(a_1,\cdots,a_n)$ corresponds to the Weil divisor $-(a_1\calD_1+\cdots +a_n\calD_n)$, and the exact sequence (\ref{cl_seq}) gives the relation: 
\begin{equation}
\label{relation_divisor}
v_{1,j}\calD_1+\cdots+v_{n,j}\calD_n=0, 
\end{equation}
for all $j=1,\cdots,d$, where $\bfv_i={}^t(v_{i,1},\cdots,v_{i,d})\in\ZZ^d$ for $i=1,\cdots,n$. 

We then define one of the nice class of divisorial ideals, called \emph{conic}. 
We say that $T(\bfa)$ is a \emph{conic module} (or \emph{conic divisorial ideal}) if there exists ${\bf x}\in\sfM_\RR$ such that $\bfa=\ulcorner\sigma({\bf x})\urcorner$. 
Conic modules are also characterized as $R$-modules appearing in $R^{1/m}$ as direct summands for $m\gg 0$ \cite[Proposition~3.6]{BG1}, thus these are related with positive characteristic commutative algebra (see e.g., \cite{Bru}). 
Also, we see that a conic module $T(\bfa)$ is an MCM module. 
Since the number of rank one MCM $R$-modules is finite up to isomorphism \cite[Corollary~5.2]{BG1}, we have only finitely many non-isomorphic conic modules. We note that in general there exists a divisorial ideal that is an MCM module but not conic. 
In fact, we will see such an example in Section~\ref{sec_MCM_HibiZ2}. 
In contrast to it, we see that any rank one MCM module is conic if $\Cl(R)\cong\ZZ$ (see Proposition~\ref{MCM_conic}). 

\medskip

A toric ring $R$ defined above can be described as the ring of invariants under the action of $G\coloneqq\Hom(\Cl(R),\kk^\times)\cong(\kk^\times)^r$ on $S\coloneqq \kk[x_1,\cdots,x_n]$.  
Let $\rmX(G)$ be the character group of $G$, which is isomorphic to $\Cl(R)$. 
(By the abuse of notations, we will use the same symbol for both of a character and the corresponding weight.) 
When we consider the prime divisor $\calD_i$ on $\Spec R$ as the element in $\rmX(G)\cong\Cl(R)$ via the surjection in (\ref{cl_seq}), we denote it by $\beta_i$. 
For a character $\chi\in\rmX(G)$, we denote by $V_\chi$ the irreducible representation corresponding to $\chi$, and we let $W=\bigoplus_{i=1}^nV_{\beta_i}$. 
Then, the symmetric algebra $\Sym W$ of the $G$-representation $W$ is isomorphic to $S$, and the algebraic torus $G$ acts on $S$, 
that is, $g\in G$ acts on $x_i$ as $g\cdot x_i=\beta_i(g)x_i$. 
Here, we recall the notion of the \emph{generic action}, and the above action of $G$ on $S$ is generic (see \cite[Subsection~10.6]{SpVdB}). 

\begin{definition}[{see \cite[Definition~1.6]{SpVdB}}]
\label{def_generic}
We say an action of $G$ on $S$ is \emph{generic} if $\Spec S$ contains a point with closed $G$-orbit and trivial stabilizer, and the subset of such points has codimension at least two in $\Spec S$. 
\end{definition}

Then, this action gives the $\Cl(R)$-grading on $S$, and the degree zero part coincides with the $G$-invariant components. In particular, we have that $R=S^G$ (see e.g., \cite[Theorem~2.1]{BG1}). 
Also, we say that $W$ is \emph{quasi-symmetric} if for every line $\ell\subset\rmX(G)_\RR\coloneqq\rmX(G)\otimes_\ZZ\RR$ passing through the origin, 
we have $\sum_{\beta_i\in\ell}\beta_i=0$ (see \cite[Subsection~1.6]{SpVdB}). We also say that a toric ring $R$ is \emph{quasi-symmetric} if $R\cong S^G$ with $S=\Sym W$ and $W$ is a quasi-symmetric representation. 
If $W$ is quasi-symmetric, then the top exterior $\bigwedge^nW$ is the trivial representation, and hence $R=S^G$ is Gorenstein. 
For a character $\chi$, we call an $R$-module with the form $M_{\chi}\coloneqq(S\otimes_\kk V_{\chi})^G$ \emph{module of covariants}, which is generated by $f\in S$ with $g\cdot f=\chi(g)f$ for any $g\in G$. 
In particular, for $\chi=\sum_ia_i\beta_i\in\rmX(G)$ we have that $T(a_1,\cdots, a_n)=M_{-\chi}$. 
Furthermore, by the arguments in \cite[Section~10.6]{SpVdB}, we see that $M_{-\chi}=T(a_1,\cdots, a_n)$ is conic if and only if $\chi$ is a strongly critical character. 
Here, we say that a character $\chi\in\rmX(G)$ is \emph{strongly critical} with respect to $\beta_1,\cdots,\beta_n$ if 
$\chi=\sum_ia_i\beta_i$ in $\rmX(G)_\RR$ with $a_i\in(-1,0]$ for all $i$. 

\subsection{Preliminaries on Hibi rings}
\label{subsec_Hibi}

In this subsection, we will consider a Hibi ring, which is a toric ring arising from a partially ordered set (= poset). 
Let $P=\{p_1,\cdots,p_{d-1}\}$ be a finite poset equipped with a partial order $\preceq$. 
For $p_i, p_j \in P$, we say that $p_i$ {\em covers} $p_j$ 
if $p_j \prec p_i$ and there is no $p' \in P$ such that $p_j \prec p' \prec p_i$ and $p'\neq p_i,p_j$. 
We then set $\widehat{P}=P \cup \{\hat{0}, \hat{1}\}$, where $\hat{0}$ (resp. $\hat{1}$) is the unique minimal (resp. maximal) element not belonging to the  poset $P$. 
We sometimes denote them as $p_0=\hat{0}$ and $p_d=\hat{1}$. 
The \emph{Hasse diagram}  $\calH(\widehat{P})$ of $\widehat{P}$ is a graph whose vertices are $\{p_0,p_1,\cdots,p_d\}$, and we draw an edge between $p_i$ and $p_j$ if $p_i$ covers $p_j$ or $p_j$ covers $p_i$. 
Thus, we say that $e=\{p_i,p_j\}$, where $0 \leq i \not= j \leq d$, is an {\em edge} of $\widehat{P}$ if $e$ is an edge of $\calH(\widehat{P})$. 
We say that a sequence $C=(p_{k_1},\cdots,p_{k_m})$ is a {\em cycle} in $\widehat{P}$ if $C$ forms a cycle 
in $\calH(\widehat{P})$, i.e., $p_{k_i} \not= p_{k_j}$ for $1 \leq i \not= j \leq m$ 
and each $\{p_{k_i},p_{k_{i+1}}\}$ is an edge of $\widehat{P}$ for $1 \leq i \leq m$, where $p_{k_{m+1}}=p_{k_1}$. 
Moreover, a cycle $C$ is said to be a {\em circuit} if $\{p_{k_i},p_{k_j}\}$ is not an edge of $\widehat{P}$ for any $1 \leq i,j \leq m$ with $|i-j| \geq 2$. 

For each edge $e=\{p_i,p_j\}$ of $\widehat{P}$ with $p_i \prec p_j$, 
let $\sigma_e$ be a linear form in $\RR^d$ defined by 
\begin{align*}
\sigma_e({\bf x})\coloneqq
\begin{cases}
x_i-x_j, \;&\text{ if }j \not= d, \\
x_i, \; &\text{ if }j=d 
\end{cases}
\end{align*}
for ${\bf x}=(x_0,x_1,\cdots,x_{d-1})$. Let $\tau_P=\mathrm{Cone}(\sigma_e \mid e \text{ is an edge of }\widehat{P}) \subset \sfN_\RR=\RR^d$. 
Then, the toric ring $\kk[P]\coloneqq\kk[\tau_P^\vee \cap \ZZ^d]$ is called the \emph{Hibi ring} associated with a poset $P$. 
The basic properties of $\kk[P]$ were studied in \cite{Hibi}. 
In particular, it is known that $\kk[P]$ is a CM normal domain with $\dim \kk[P]=|P|+1=d$. Moreover, $\kk[P]$ is Gorenstein if and only if $P$ is pure. 
Here, we say that $P$ is \emph{pure} if all of the maximal chains $p_{i_1} \prec \cdots \prec p_{i_\ell}$ have the same length. 

\begin{remark}
\label{remark_Hibi}
If there exists an edge $e$ of $\widehat{P}$ such that every maximal chain of $\widehat{P}$ contains $e$, then we easily see that the associated Hibi ring is the polynomial extension of a certain Hibi ring. 
For avoiding triviality, we assume that $\widehat{P}$ does not contain such an edge, and hence any Hibi ring is not the polynomial extension. 
\end{remark}

Next, we consider the divisor class group $\Cl(\kk[P])$ of a Hibi ring $\kk[P]$. 
Let $e_1,\cdots,e_n$ be all the edges of $\widehat{P}$. We set the linear form $\sigma:\RR^d \rightarrow \RR^n$ as 
$\sigma({\bf x})=(\sigma_{e_1}({\bf x}),\cdots,\sigma_{e_n}({\bf x}))$ where ${\bf x} \in \RR^d$. 
For $p \in \widehat{P} {\setminus} \{\hat{1}\}$, let $U(p)$ denote the set of all elements in $\widehat{P}$ which cover $p$. 
Also, for $p \in \widehat{P} {\setminus} \{\hat{0}\}$, let $D(p)$ denote the set of all elements in $\widehat{P}$ which are covered by $p$. 
By the construction of the cone $\tau_P$, each prime divisor is indexed by an edge of $\widehat{P}$.  
Using the description of $\sigma(-)$ and (\ref{relation_divisor}), we see that each prime divisor $\calD_e$ corresponding to an edge $e$ of $\widehat{P}$ satisfies the relations: 
\begin{align}
\label{relation_div_hibi}
\sum_{q \in U(p)} \calD_{\{q,p\}}=\sum_{q' \in D(p)} \calD_{\{p,q'\}} \text{ for }p \in \widehat{P} {\setminus} \{\hat{0}, \hat{1}\}, 
\; \text{ and }\; \sum_{q \in U(\hat{0})}\calD_{\{q,p_0\}}=0. 
\end{align}
In particular, using these relations, we see that the class group is a free abelian group as follows. 

\begin{theorem}[{see \cite{HHN}}] 
\label{classgroup_Hibi}
Let the notation be the same as above. 
Then, we have that $\Cl(\kk[P]) \cong  \ZZ^{n-d}$. 
Moreover, generators of $\Cl(\kk[P])$ can be given by prime divisors corresponding to edges of $\widehat{P}$ not contained in a spanning tree of $\widehat{P}$. 
\end{theorem}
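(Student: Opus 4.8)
The plan is to read off $\Cl(\kk[P])$ directly from the exact sequence~(\ref{cl_seq}), which identifies it with the cokernel of the map $\sigma(-)\colon\ZZ^d\to\ZZ^n$. The first step is to recognize this map, written as an $n\times d$ integer matrix $\bar{B}$ with rows indexed by the edges $e_1,\dots,e_n$ and columns indexed by the vertices $p_0,\dots,p_{d-1}$, as the \emph{reduced incidence matrix} of the Hasse diagram $\calH(\widehat{P})$ with the column of $\hat{1}=p_d$ deleted. Concretely, orienting each edge $e=\{p_i,p_j\}$ from the smaller element $p_i$ to the larger element $p_j$, the row of $e$ carries entry $+1$ in column $i$ and $-1$ in column $j$ (the latter being suppressed when $j=d$); matching this against the definition of $\sigma_e$ and against the relations~(\ref{relation_div_hibi}) at each vertex confirms the identification. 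Since $\hat{0}$ lies below and $\hat{1}$ lies above every element of $P$, the graph $\calH(\widehat{P})$ is connected; moreover summing the vertex-columns of the full incidence matrix gives zero, so deleting the single column of $\hat{1}$ loses no information.

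With this dictionary in place, the theorem reduces to two standard facts about incidence matrices of a connected graph on $d+1$ vertices with $n$ edges. First, connectedness forces $\bar{B}$ to have full column rank $d$, so $\Image(\sigma(-))$ has rank $d$ and the cokernel has rank $n-d$. Second, $\bar{B}$ is totally unimodular; in particular, choosing any spanning tree $T$ of $\calH(\widehat{P})$ (which exists by connectedness and has exactly $d$ edges), the $d\times d$ submatrix $\bar{B}_T$ formed by the rows of the tree edges has determinant $\pm 1$. This unimodular minor shows that the gcd of the maximal minors of $\sigma(-)$ is $1$, hence $\Image(\sigma(-))$ is a saturated (direct-summand) sublattice of $\ZZ^n$ and the cokernel is torsion-free. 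Combining the two facts yields $\Cl(\kk[P])\cong\ZZ^{n-d}$.

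For the statement about generators, I would fix a spanning tree $T$ and split $\ZZ^n=\ZZ^T\oplus\ZZ^C$ according to the tree edges $T$ and the $n-d$ chords $C$. Since the tree block $\bar{B}_T$ is invertible over $\ZZ$, the image of $\sigma(-)$ is the graph of the homomorphism $\bar{B}_C\bar{B}_T^{-1}\colon\ZZ^T\to\ZZ^C$; consequently $\Image(\sigma(-))\cap\ZZ^C=0$ and $\Image(\sigma(-))+\ZZ^C=\ZZ^n$, so that $\ZZ^n=\Image(\sigma(-))\oplus\ZZ^C$. The projection onto $\ZZ^C$ therefore induces an isomorphism $\Cl(\kk[P])=\Coker(\sigma(-))\xrightarrow{\sim}\ZZ^C$ sending the class of each chord divisor $\calD_f$ to the corresponding basis vector. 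Hence the prime divisors attached to the edges outside $T$ form a $\ZZ$-basis of $\Cl(\kk[P])$.

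The only genuinely non-formal ingredient is the spanning-tree determinant fact, namely that a maximal square submatrix of the reduced incidence matrix has determinant $\pm 1$ exactly when the chosen edges form a spanning tree and $0$ otherwise; this is the combinatorial heart of the argument (in the circle of the matrix-tree theorem) and is most cleanly proved by induction on the number of vertices, expanding along a leaf of the tree. The remaining potential pitfall is purely bookkeeping: one must ensure that the conventions in the definition of $\sigma_e$—in particular the special role of $\hat{1}$, whose coordinate is suppressed—are matched exactly to the deleted column, which is precisely what the relations~(\ref{relation_div_hibi}) encode.
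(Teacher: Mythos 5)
Your argument is correct and complete. Note, however, that the paper does not actually prove Theorem~\ref{classgroup_Hibi}: it is imported from \cite{HHN}, and the text merely remarks that the relations (\ref{relation_div_hibi}) yield the statement. So there is nothing in the paper to compare step-by-step; what you have written is a self-contained proof of a cited result. Your route --- identifying the matrix of $\sigma(-)$ in the exact sequence (\ref{cl_seq}) with the reduced incidence matrix of the connected graph $\calH(\widehat{P})$ (column of $\hat{1}$ deleted), then invoking full column rank $d$, total unimodularity, and the fact that a $d\times d$ row-submatrix has determinant $\pm 1$ exactly for spanning trees --- is the standard graph-theoretic mechanism underlying the relations (\ref{relation_div_hibi}), and your splitting $\ZZ^n=\Image(\sigma(-))\oplus\ZZ^C$ via the graph of $\bar{B}_C\bar{B}_T^{-1}$ cleanly delivers both the freeness of rank $n-d$ and the claim that the chord divisors form a basis. (The original argument in \cite{HHN} is phrased in terms of the distributive lattice of poset ideals rather than incidence matrices, so your proof is genuinely more elementary and more transparent for the purposes of this paper.) The only cosmetic point worth flagging is the sign convention: under the surjection in (\ref{cl_seq}) the basis vector at an edge $e$ maps to the class of $\fkp_e$, i.e.\ to $-\calD_e$ rather than $\calD_e$, but this does not affect the assertion that the classes of the edges outside a spanning tree form a $\ZZ$-basis.
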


Here, a {\em spanning tree} of $\widehat{P}$ is a set of $d$ edges $e_{i_1},\cdots,e_{i_d}$ of $\widehat{P}$ satisfying the conditions: 
\begin{itemize}
\item any element in $\widehat{P}$ is an endpoint of some edge $e_{i_j}$, 
\item the edges $e_{i_1},\cdots,e_{i_d}$ do not form cycles.
\end{itemize}
We remark that a spanning tree is not unique. 
Furthermore, conic classes in $\Cl(\kk[P]) \cong  \ZZ^{n-d}$ can be described as follows. 

\begin{theorem}[{see \cite[Theorem~2.4]{HN}}]
\label{conic_Hibi}
Let $e_1,\cdots,e_d$ be a spanning tree of $\widehat{P}$ and $e_{d+1},\cdots,e_n$ be the remaining edges of $\widehat{P}$. 
For a circuit $C=(p_{k_1},\cdots,p_{k_m})$ in $\widehat{P}$, we define the subsets of edges 
\begin{align*}
X_C^+&=\{\{p_{k_i},p_{k_{i+1}}\} \mid 1 \leq i \leq m, \;  p_{k_i} \prec p_{k_{i+1}}\}, \\
X_C^-&=\{\{p_{k_i},p_{k_{i+1}}\} \mid 1 \leq i \leq m, \; p_{k_{i+1}} \prec p_{k_i}\}, \\
Y_C^{\pm}&=X_C^{\pm} \cap \{e_1,\cdots,e_d\}, \text{ and }\; 
Z_C^{\pm} = X_C^\pm \cap \{e_{d+1},\cdots,e_n\}, 
\end{align*}
where $p_{k_{m+1}}=p_{k_1}$. We define the convex polytope 
\begin{align}\label{ccp}
\calC(P)=\left\{(z_1,\cdots,z_{n-d}) \in \RR^{n-d} \mid 
-|X_C^-|+1 \leq \sum_{e_{d+\ell} \in Z_C^+}  z_\ell - \sum_{e_{d+\ell'} \in Z_C^-}  z_{\ell'} \leq |X_C^+|-1\right\}, 
\end{align}
where $C=(p_{k_1},\cdots,p_{k_m})$ runs over all circuits in $\widehat{P}$. 
Then, conic modules of $\kk[P]$ precisely correspond to points in $\calC(P) \cap \Cl(\kk[P])=\calC(P) \cap \ZZ^{n-d}$. 
\end{theorem}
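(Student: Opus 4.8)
The plan is to unwind the definition of a conic module into a feasibility question for a system of difference constraints on the Hasse diagram $\widehat{P}$, and then to recognize the circuit inequalities cutting out $\calC(P)$ as the classical no-negative-cycle conditions for that system. Recall from Subsection~\ref{sec_preliminary_toric} that $T(\bfa)$ is conic precisely when $\bfa=\ulcorner\sigma({\bf x})\urcorner$ for some ${\bf x}\in\sfM_\RR=\RR^d$, and that $T(\bfa)\cong T(\bfa')$ iff $\bfa-\bfa'\in\sigma(\ZZ^d)$. Since replacing a representative $\bfa$ by $\bfa+\sigma({\bf m})$ amounts to translating ${\bf x}$ by ${\bf m}\in\ZZ^d$, a class in $\Cl(\kk[P])$ is conic iff for one (equivalently, any) integral representative $\bfa$ the system
$$
a_e-1<\sigma_e({\bf x})\le a_e \qquad (e\text{ an edge of }\widehat{P})
$$
has a real solution ${\bf x}=(x_0,\dots,x_{d-1})$ (with $x_d=0$). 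As each $\sigma_e({\bf x})=x_i-x_j$ is a difference of vertex potentials, this is exactly a system of difference constraints on the graph $\widehat{P}$.

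Next I would set up the dictionary between cycle functionals and the coordinates $z=(z_1,\dots,z_{n-d})$ on $\Cl(\kk[P])\cong\ZZ^{n-d}$ furnished by Theorem~\ref{classgroup_Hibi}. Orienting each edge $e=\{p_i,p_j\}$ with $p_i\prec p_j$ from the smaller to the larger vertex, every simple cycle $C$ yields a functional $\varphi_C(\bfa)=\sum_{e\in X_C^+}a_e-\sum_{e\in X_C^-}a_e$ obtained by telescoping around $C$. Because a sum of potential differences around a closed walk vanishes, $\varphi_C\circ\sigma=0$, so $\varphi_C$ descends to $\Cl(\kk[P])$. Expanding $C$ in the basis of fundamental cycles of the non-tree edges $e_{d+1},\dots,e_n$ (after fixing the signs of the generators suitably), the coefficient of the fundamental cycle of $e_{d+\ell}$ is $+1$ if $e_{d+\ell}\in Z_C^+$, $-1$ if $e_{d+\ell}\in Z_C^-$, and $0$ otherwise; hence $\varphi_C(\bfa)=\sum_{e_{d+\ell}\in Z_C^+}z_\ell-\sum_{e_{d+\ell'}\in Z_C^-}z_{\ell'}$, which is precisely the middle term of the inequalities defining $\calC(P)$.

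For necessity, suppose $\bfa=\ulcorner\sigma({\bf x})\urcorner$, so that $\theta_e\coloneqq a_e-\sigma_e({\bf x})\in[0,1)$ for every edge. Telescoping gives $\varphi_C(\bfa)=\varphi_C(\theta)=\sum_{e\in X_C^+}\theta_e-\sum_{e\in X_C^-}\theta_e$, and since $\theta_e\in[0,1)$ while $\varphi_C(\bfa)\in\ZZ$, this forces $-|X_C^-|+1\le\varphi_C(\bfa)\le|X_C^+|-1$. By the dictionary above these are exactly the circuit inequalities, so every conic class lies in $\calC(P)\cap\ZZ^{n-d}$; here it suffices to impose them on circuits (chordless cycles), since splitting a cycle along a chord into two shorter cycles adds the respective $\varphi_C$ and the counts $|X_C^\pm|$, so the chordless inequalities imply all simple-cycle ones by induction on the number of chords.

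For the converse I would invoke the classical criterion that a system of difference constraints is feasible iff the associated weighted digraph carries no cycle of negative weight. Modelling the two inequalities of an edge $e$ by arcs of weights $a_e$ and $1-a_e$, the forward and backward traversals of a simple cycle $C$ have total weights $|X_C^+|-\varphi_C(\bfa)$ and $|X_C^-|+\varphi_C(\bfa)$, which the condition $z\in\calC(P)$ makes at least $1$; any closed walk decomposes into simple cycles, so all cycle weights are positive. Consequently the tightened closed system $a_e-1+\delta\le\sigma_e({\bf x})\le a_e$ is feasible for all sufficiently small $\delta>0$, and any solution satisfies the original strict-lower/non-strict-upper constraints, exhibiting the class as conic. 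The main obstacle is precisely this converse: one must carry out the no-negative-cycle feasibility argument, reduce general cycles to chordless circuits, and reconcile the strict lower with the non-strict upper bounds; the remaining steps are bookkeeping with the incidence structure of $\widehat{P}$.
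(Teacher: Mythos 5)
This statement is quoted from \cite[Theorem~2.4]{HN} and the present paper gives no proof of it, so there is no in-paper argument to compare against; I can only assess your proposal on its own terms, and it is correct and essentially complete. The reduction of ``$T(\bfa)$ is conic'' to solvability of the half-open system $a_e-1<\sigma_e({\bf x})\le a_e$ is exactly the definition, the well-definedness on classes follows from $\ulcorner\sigma({\bf x}+{\bf m})\urcorner=\ulcorner\sigma({\bf x})\urcorner+\sigma({\bf m})$ for ${\bf m}\in\ZZ^d$, and the identification $\varphi_C(\bfa)=\sum_{e_{d+\ell}\in Z_C^+}z_\ell-\sum_{e_{d+\ell'}\in Z_C^-}z_{\ell'}$ is immediate once one evaluates $\varphi_C$ on the unique representative of the class vanishing on the spanning-tree coordinates (this is a slightly shorter route than your fundamental-cycle expansion, but amounts to the same thing). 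Your necessity argument via $\theta_e=a_e-\sigma_e({\bf x})\in[0,1)$ plus integrality is clean, and the three points on which the converse genuinely rests are all present and check out: (i) the chord-splitting identity $\varphi_C=\varphi_{C_1}+\varphi_{C_2}$ with $|X_C^\pm|=|X_{C_1}^\pm|+|X_{C_2}^\pm|-1$, which propagates the circuit inequalities to all simple cycles; (ii) the decomposition of an arbitrary negative closed walk in the constraint digraph into simple directed cycles, each of which is either a $2$-cycle of weight $1-\delta>0$ or projects to a simple cycle of $\widehat{P}$; and (iii) the uniform choice of $\delta$, which exists because there are finitely many simple cycles and each forward/backward weight is at least $1-|X_C^\pm|\delta$. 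One could add for completeness that $|X_C^+|,|X_C^-|\ge1$ for every cycle of a Hasse diagram (no directed cycles in a poset), so the two-sided bounds are never vacuously contradictory. This is the same circle of ideas used in \cite{HN} (analysis of when $\ulcorner\sigma({\bf x})\urcorner=\bfa$ is solvable, reduced to circuit conditions), packaged here through the standard feasibility criterion for difference constraints.
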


As we mentioned, conic modules are rank one MCM modules, but there exists a rank one MCM module that is not conic. 
In Section~\ref{sec_MCM_HibiZ2}, we will determine all rank one MCM modules over Hibi rings with class group $\ZZ^2$, and we can find many examples of non-conic rank one MCM modules there. 

\begin{example}
We consider the poset $P$ whose Hasse diagram takes the following form. 

\medskip
\begin{center}
{\scalebox{0.75}{
\begin{tikzpicture}

\coordinate (Min) at (0,0); \coordinate (Max) at (0,3);
\coordinate (P1) at (-1,1); \coordinate (P2) at (-1,2); \coordinate (P3) at (1,1);
\coordinate (P4) at (0.5,2); \coordinate (P5) at (1.5,2); 
\draw[line width=0.32cm, lightgray]  (P1)--(P2)--(Max) ; \draw[line width=0.32cm, lightgray]  (Min)--(P3)--(P4)--(Max) ; 
\draw[line width=0.32cm, lightgray]  (Min)--(P3)--(P5) ; 
\draw[line width=0.05cm]  (Min)--(P1)--(P2)--(Max); \draw[line width=0.05cm]  (Min)--(P3)--(P4)--(Max); \draw[line width=0.05cm]  (Min)--(P3)--(P5)--(Max); 

\draw [line width=0.05cm, fill=gray] (Min) circle [radius=0.15] ; \draw [line width=0.05cm, fill=gray] (Max) circle [radius=0.15] ; 
\draw [line width=0.05cm, fill=white] (P1) circle [radius=0.15] ; \draw [line width=0.05cm, fill=white] (P2) circle [radius=0.15] ;
\draw [line width=0.05cm, fill=white] (P3) circle [radius=0.15] ;
\draw [line width=0.05cm, fill=white] (P4) circle [radius=0.15] ; \draw [line width=0.05cm, fill=white] (P5) circle [radius=0.15] ;

\node at (-0.6,0.3) {\large $e_1$}; \node at (-1.4,1.5) {\large $e_2$}; \node at (-0.8,2.8) {\large $e_3$}; 
\node at (0.8,0.3) {\large $e_4$}; \node at (0.35,1.5) {\large $e_5$}; \node at (0,2.3) {\large $e_6$}; 
\node at (1.7,1.5) {\large $e_7$}; \node at (1.1,2.6) {\large $e_8$}; 
\end{tikzpicture}
} }
\end{center}
\medskip

In this case, we see that the set of grayed edges $\{e_2,e_3,\cdots,e_7\}$ is a spanning tree, thus the prime divisors $\calD_{e_1},\calD_{e_8}$ generate $\Cl(R)$, which is isomorphic to $\ZZ^2$. 
Using the relations (\ref{relation_div_hibi}), we have that 
$$
\calD_{e_1}=\calD_{e_2}=\calD_{e_3}=-\calD_{e_4}, \quad \calD_{e_5}=\calD_{e_6}=-\calD_{e_1}-\calD_{e_8}, \quad \calD_{e_7}=\calD_{e_8}. 
$$
Furthermore, by Theorem~\ref{conic_Hibi} the convex polytope $\calC(P)$ can be described as 
$$
\calC(P)=\{(z_1,z_8)\in\RR^2\mid -2\le z_1\le2,\, -1\le z_8\le1,\, -2\le z_1-z_8\le2\}. 
$$
Thus, divisorial ideals corresponding to $a_1\calD_{e_1}+a_8\calD_{e_8}$ with $(a_1,a_8)\in\calC(P)\cap\ZZ^2$ are conic modules. 
\end{example}

\section{Splitting NCCRs for Hibi rings with class group $\ZZ^2$}
\label{sec_HibiZ2}

In this section, we study splitting NCCRs for some Hibi rings. Since the class group of a Hibi ring is free abelian, non-Gorenstein Hibi rings do not admit an NCCR (see Remark~\ref{rem_NCCR}(b)). 
Thus, we will consider Gorenstein Hibi rings, which are coming from pure posets. 
First, since Hibi rings are toric rings, Gorenstein Hibi rings with class group $\ZZ$ have a splitting NCCR (see Theorem~\ref{NCCR_Z}). 
In particular, we easily have the following. 

\begin{example}
Let $R$ be a Gorenstein Hibi ring with $\Cl(R)\cong\ZZ$. We assume that $R$ is not a polynomial extension (see Remark~\ref{remark_Hibi}).
Then, the Hasse diagram $\calH(\widehat{P})$ of a poset $\widehat{P}$ giving such a Hibi ring $R$ takes the following form. 

\begin{center}
{\scalebox{0.5}{
\begin{tikzpicture}

\coordinate (Min) at (2,-1); \coordinate (Max) at (2,4);
\coordinate (N11) at (0,0); \coordinate (N12) at (0,1); \coordinate (N15) at (0,3);
\coordinate (Nt1) at (4,0); \coordinate (Nt2) at (4,1); \coordinate (Nt5) at (4,3);
\draw[line width=0.07cm]  (N11)--(N12); \draw[line width=0.07cm]  (N12)--(0,1.5); \draw[line width=0.07cm]  (0,2.5)--(N15); 
\draw[line width=0.07cm,dotted]  (0,1.8)--(0,2.2);
\draw[line width=0.07cm]  (Nt1)--(Nt2); \draw[line width=0.07cm]  (Nt2)--(4,1.5); \draw[line width=0.07cm]  (4,2.5)--(Nt5); 
\draw[line width=0.07cm,dotted]  (4,1.8)--(4,2.2);
\draw[line width=0.07cm]  (N11)--(Min); \draw[line width=0.07cm]  (Nt1)--(Min); 
\draw[line width=0.07cm]  (N15)--(Max); \draw[line width=0.07cm]  (Nt5)--(Max); 

\draw [line width=0.07cm, fill=gray] (Min) circle [radius=0.2] ; \draw [line width=0.07cm, fill=gray] (Max) circle [radius=0.2] ; 
\draw [line width=0.07cm, fill=white] (N11) circle [radius=0.2] ; \draw [line width=0.07cm, fill=white] (N12) circle [radius=0.2] ;
\draw [line width=0.07cm, fill=white] (N15) circle [radius=0.2] ;
\draw [line width=0.07cm, fill=white] (Nt1) circle [radius=0.2] ; \draw [line width=0.07cm, fill=white] (Nt2) circle [radius=0.2] ;
\draw [line width=0.07cm, fill=white] (Nt5) circle [radius=0.2] ;

\draw [line width=0.05cm, decorate,decoration={brace,amplitude=10pt}](-0.7,0) -- (-0.7,3) node[black,midway,xshift=-1.3cm,yshift=0.2cm] {\LARGE $m-1$}
node[black,midway,xshift=-1.1cm,yshift=-0.35cm] {\LARGE edges}; 
\draw [line width=0.05cm, decorate,decoration={brace,mirror,amplitude=10pt}](4.7,0) -- (4.7,3) node[black,midway,xshift=1.3cm,yshift=0.2cm] {\LARGE $m-1$}
node[black,midway,xshift=1.5cm,yshift=-0.35cm] {\LARGE edges}; 
\end{tikzpicture}
} }
\end{center}
Such a Hibi ring $R$ is isomorphic to the Segre product of two polynomial rings with $m+1$ variables (see e.g., \cite[Example~2.6]{HN}), 
thus we have that $$R\cong\kk[x_0,\cdots,x_m]\#\kk[y_0,\cdots,y_m]=\kk[x_iy_j \mid i,j=0,\cdots,m].$$ 
Let $T(a)$ be a divisorial ideal corresponding to $a\in\Cl(R)$. 
Then any conic module is described as $T(a)$ with $a\in[-m,m]\cap\ZZ\subset\Cl(R)$ (see Theorem~\ref{conic_Hibi}), 
and any rank one MCM module is conic (see Lemma~\ref{MCM_conic}). 
By Theorem~\ref{NCCR_Z}, $N\coloneqq T(0)\oplus\cdots\oplus T(m)$ gives a splitting NCCR of $R$, and hence any module giving a splitting NCCR takes the form $$(N\otimes_RT(b))^{**}\cong T(b)\oplus T(b+1)\oplus\cdots\oplus T(b+m)$$ with $b\in\ZZ$ by Proposition~\ref{NCCR_Z_any}. 
\end{example}

Accordingly, we then consider the case where $R$ is a Gorenstein Hibi ring with $\Cl(R)\cong\ZZ^2$. First, we classify the posets giving such a Hibi ring. 
We note that even if we  turn $\calH(\widehat{P})$ upside down, it gives the same Hibi ring up to isomorphism. 
Thus, we classify the desired posets up to this operation. 

\begin{lemma}
\label{poset_Z2}
Let $R=\kk[P]$ be the Gorenstein Hibi ring associated with a pure poset $P$. 
We assume that $\Cl(R)\cong\ZZ^2$ and $R$ is not a polynomial extension of a Hibi ring (see Remark~\ref{remark_Hibi}). 
Then, the Hasse diagram $\calH(\widehat{P})$ looks like one of posets {\rm(I)--(V)} shown in Figure~\ref{figure_poset_Z2} where grayed vertices are $\hat{0}, \hat{1}$ respectively. 
We remark that since $R$ is Gorenstein each poset $P$ is pure, that is, all of the maximal chains have the same length. 
We refer to Figure~\ref{poset_typeI}--\ref{poset_typeV} for more precise lengths of chains. 
\end{lemma}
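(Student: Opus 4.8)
The plan is to reduce the statement to a purely combinatorial classification of the Hasse diagram $\calH(\widehat{P})$ and then enumerate the finitely many possibilities. The starting point is Theorem~\ref{classgroup_Hibi}, which gives $\Cl(\kk[P]) \cong \ZZ^{n-d}$, where $n$ is the number of edges and $d+1$ the number of vertices of the connected graph $\calH(\widehat{P})$. Since a spanning tree consists of $d$ edges, $n-d$ is exactly the first Betti number (cycle rank) $b_1(\calH(\widehat{P})) = n - (d+1) + 1$ of the Hasse diagram. Hence the hypothesis $\Cl(R) \cong \ZZ^2$ is equivalent to $b_1(\calH(\widehat{P})) = 2$, i.e.\ $\calH(\widehat{P})$ is a spanning tree together with exactly two extra edges.

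Next I would translate the two standing restrictions into graph-theoretic language. Purity of $P$ means $\widehat{P}$ is graded, so $\calH(\widehat{P})$ is a \emph{levelled} graph: assigning to each $p$ its rank $0 \le \mathrm{rk}(p) \le r$ (with $\mathrm{rk}(\hat{0})=0$ and $\mathrm{rk}(\hat{1})=r$), every edge joins consecutive levels. Writing $w_i$ for the number of elements of rank $i$ and $E_i$ for the number of edges between levels $i$ and $i+1$, and building the graph one level at a time (connectedness being preserved at each step since every non-minimal element covers something on the level directly below), one obtains
\[
b_1(\calH(\widehat{P})) = \sum_{i=0}^{r-1}\bigl(E_i - w_{i+1}\bigr), \qquad E_i - w_{i+1} \ge 0 \text{ for all } i.
\]
For the polynomial-extension condition, I would use that in a poset with unique $\hat{0},\hat{1}$ every vertex lies on a maximal chain from $\hat{0}$ to $\hat{1}$; consequently any bridge of $\calH(\widehat{P})$ necessarily separates $\hat{0}$ from $\hat{1}$ and therefore belongs to every maximal chain. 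Thus, by Remark~\ref{remark_Hibi}, the assumption that $R$ is not a polynomial extension is equivalent to $\calH(\widehat{P})$ being bridgeless (i.e.\ $2$-edge-connected).

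I would then run the enumeration. Since the summands in the displayed formula are non-negative integers summing to $2$, either a single transition contributes excess $2$ (one index $i$ with $E_i - w_{i+1} = 2$), or two transitions each contribute excess $1$; in both cases the excess edges are precisely the two non-tree edges. In each case I analyse the bipartite graph between the relevant consecutive levels together with the saturated chains filling the remaining levels. The bridgeless condition forces the two independent cycles to sit inside a $2$-edge-connected core; up to the upside-down flip $\calH(\widehat{P}) \leftrightarrow \calH(\widehat{P})^{\mathrm{op}}$ (which preserves the Hibi ring), the reduced topological type of this core is either a theta graph (two vertices joined by three internally disjoint chains) or a bouquet of two ``lenses'' sharing a single middle vertex. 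Matching these types against the levelled/pure structure and recording the admissible chain lengths yields the five shapes (I)--(V) and the length constraints displayed in Figures~\ref{poset_typeI}--\ref{poset_typeV}.

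The main obstacle is this last step: the casework needed to list all bridgeless pure realizations without omission or repetition. Concretely, at each excess transition one must determine which bipartite configurations are compatible with $2$-edge-connectivity and with every vertex lying on a $\hat{0}$--$\hat{1}$ chain, verify that purity can actually be met by a consistent choice of chain lengths on the non-excess levels, and carefully quotient by the flip symmetry to avoid double counting. Establishing exhaustiveness --- that no further $2$-edge-connected levelled graph of cycle rank two escapes the list --- is the delicate bookkeeping at the heart of the argument, whereas the three reductions above are routine.
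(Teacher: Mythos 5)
Your reduction is essentially the paper's: the paper also observes that $\Cl(R)\cong\ZZ^2$ forces $|E(\calG)|-|V(\calG)|=1$ for $\calG=\calH(\widehat{P})$ and then classifies the finitely many admissible connected graphs. Two points of comparison. First, your reformulation of ``not a polynomial extension'' as ``$\calH(\widehat{P})$ is bridgeless'' is correct (a bridge must separate $\hat{0}$ from $\hat{1}$, since a $\hat 0$--$\hat 1$ chain cannot traverse an edge twice, so every maximal chain uses it) and is genuinely tidier than the paper's treatment: the paper separately rules out degree-one vertices at the start and then, in the middle of the case analysis, discards by hand the dumbbell-shaped configuration ($|U(v)|=1,|D(v)|=2$, $|U(w)|=2,|D(w)|=1$ with $v\prec w$ on a chain) as a polynomial extension; your bridgeless condition kills both at once, since that configuration is exactly the third topological type (two lenses joined by a bridge path) and never enters the theta/figure-eight dichotomy. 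Second, and this is the real issue: the step you defer is the proof. The paper's argument runs the handshaking lemma to get ``either two degree-$3$ vertices or one degree-$4$ vertex'' and then spends the bulk of the proof placing those vertices relative to $\hat 0,\hat 1$ and analysing the splits $|U(v)|,|D(v)|$ to land on exactly (I)--(V). In your setup the analogous work is: for the theta core, each of the three paths between the branch vertices is monotone except for at most one passage through $\hat 0$ (its only possible interior local minimum) and one through $\hat 1$ (its only possible interior local maximum), and the four placements of $\hat 0,\hat 1$ (both branch vertices; one branch vertex; on two different paths; on the same path) give (V), (I), (II), (III) respectively, while the figure-eight core forces the degree-$4$ vertex to be interior with $|U|=|D|=2$, giving (IV). You state that this matching ``yields the five shapes'' but explicitly do not carry it out, and exhaustiveness of exactly this enumeration is the content of the lemma; as written the proposal is a correct strategy with the decisive casework left as an acknowledged placeholder.
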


\begin{figure}[h!]
\begin{center}
\newcommand{\edgewidth}{0.11cm} 
\newcommand{\nodewidth}{0.11cm} 
\newcommand{\noderad}{0.28} 

\begin{tikzpicture}
\node at (0,-2.3) {(I)} ; 
\node at (0,0) 
{\scalebox{0.25}{
\begin{tikzpicture}

\coordinate (Min) at (4,0); \coordinate (Max) at (4,15);
\coordinate (N11) at (0,2); \coordinate (N12) at (0,7); \coordinate (N13) at (0,8); \coordinate (N14) at (0,13); 
\coordinate (N21) at (7,2); \coordinate (N22) at (7,7); 
\coordinate (N23a) at (6,8);  \coordinate (N24a) at (6,13); \coordinate (N23b) at (8,8); \coordinate (N24b) at (8,13); 

\draw[line width=\edgewidth]  (Min)--(N11); \draw[line width=\edgewidth]  (N11)--(0,2.8); 
\draw[line width=\edgewidth, loosely dotted]  (0,3.4)--(0,5.6); \draw[line width=\edgewidth]  (0,6.2)--(N12); 
\draw[line width=\edgewidth]  (N12)--(N13); \draw[line width=\edgewidth]  (N13)--(0,8.8); 
\draw[line width=\edgewidth, loosely dotted]  (0,9.4)--(0,11.6); \draw[line width=\edgewidth]  (0,12.2)--(N14); \draw[line width=\edgewidth]  (N14)--(Max); 

\draw[line width=\edgewidth]  (Min)--(N21); \draw[line width=\edgewidth]  (N21)--(7,2.8); 
\draw[line width=\edgewidth, loosely dotted]  (7,3.4)--(7,5.6); \draw[line width=\edgewidth]  (7,6.2)--(N22); 

\draw[line width=\edgewidth]  (N22)--(N23a); \draw[line width=\edgewidth]  (N22)--(N23b); 
\draw[line width=\edgewidth]  (N23a)--(6,8.8); \draw[line width=\edgewidth]  (N23b)--(8,8.8); 
\draw[line width=\edgewidth, loosely dotted]  (6,9.4)--(6,11.6); \draw[line width=\edgewidth, loosely dotted]  (8,9.4)--(8,11.6); 
\draw[line width=\edgewidth]  (6,12.2)--(N24a); \draw[line width=\edgewidth]  (N24a)--(Max); 
\draw[line width=\edgewidth]  (8,12.2)--(N24b); \draw[line width=\edgewidth]  (N24b)--(Max); 

\draw [line width=\nodewidth, fill=gray] (Min) circle [radius=\noderad] ; \draw [line width=\nodewidth, fill=gray] (Max) circle [radius=\noderad] ; 
\draw [line width=\nodewidth, fill=white] (N11) circle [radius=\noderad] ; \draw [line width=\nodewidth, fill=white] (N12) circle [radius=\noderad] ; 
\draw [line width=\nodewidth, fill=white] (N13) circle [radius=\noderad] ; \draw [line width=\nodewidth, fill=white] (N14) circle [radius=\noderad] ; 
\draw [line width=\nodewidth, fill=white] (N21) circle [radius=\noderad] ; \draw [line width=\nodewidth, fill=white] (N22) circle [radius=\noderad] ; 
\draw [line width=\nodewidth, fill=white] (N23a) circle [radius=\noderad] ; \draw [line width=\nodewidth, fill=white] (N24a) circle [radius=\noderad] ; 
\draw [line width=\nodewidth, fill=white] (N23b) circle [radius=\noderad] ; \draw [line width=\nodewidth, fill=white] (N24b) circle [radius=\noderad] ; 

\end{tikzpicture}
} } ;

\node at (4.6,-2.3) {(II)} ; 
\node at (4.6,0) 
{\scalebox{0.25}{
\begin{tikzpicture}

\coordinate (Min) at (4,0); \coordinate (Max) at (4,15);
\coordinate (N11) at (0,2); \coordinate (N12) at (0,5); \coordinate (N13) at (0,6); \coordinate (N14) at (0,9); \coordinate (N15) at (0,10); \coordinate (N16) at (0,13); 
\coordinate (N21) at (8,2); \coordinate (N22) at (8,5); \coordinate (N23) at (8,6); \coordinate (N24) at (8,9); \coordinate (N25) at (8,10); \coordinate (N26) at (8,13); 
\coordinate (N31) at (2,8.75); \coordinate (N32) at (6,6.25);

\draw[line width=\edgewidth]  (Min)--(N11); \draw[line width=\edgewidth]  (N11)--(0,2.8); \draw[line width=\edgewidth, loosely dotted]  (0,3)--(0,4); 
\draw[line width=\edgewidth]  (0,4.2)--(N12); \draw[line width=\edgewidth]  (N12)--(N13); \draw[line width=\edgewidth]  (N13)--(0,6.8); 
\draw[line width=\edgewidth, loosely dotted]  (0,7)--(0,8); \draw[line width=\edgewidth]  (0,8.2)--(N14); \draw[line width=\edgewidth]  (N14)--(N15); 
\draw[line width=\edgewidth]  (N15)--(0,10.8); \draw[line width=\edgewidth, loosely dotted]  (0,11)--(0,12); 
\draw[line width=\edgewidth]  (0,12.2)--(N16); \draw[line width=\edgewidth]  (N16)--(Max); 

\draw[line width=\edgewidth]  (Min)--(N21); \draw[line width=\edgewidth]  (N21)--(8,2.8); \draw[line width=\edgewidth, loosely dotted]  (8,3)--(8,4); 
\draw[line width=\edgewidth]  (8,4.2)--(N22); \draw[line width=\edgewidth]  (N22)--(N23); \draw[line width=\edgewidth]  (N23)--(8,6.8); 
\draw[line width=\edgewidth, loosely dotted]  (8,7)--(8,8); \draw[line width=\edgewidth]  (8,8.2)--(N24); \draw[line width=\edgewidth]  (N24)--(N25); 
\draw[line width=\edgewidth]  (N25)--(8,10.8); \draw[line width=\edgewidth, loosely dotted]  (8,11)--(8,12); 
\draw[line width=\edgewidth]  (8,12.2)--(N26); \draw[line width=\edgewidth]  (N26)--(Max); 

\draw[line width=\edgewidth]  (N15)--(N31); \draw[line width=\edgewidth]  (N31)--(2.8,8.25); 
\draw[line width=\edgewidth, loosely dotted]  (3.2,8)--(4.8,7); 
\draw[line width=\edgewidth]  (5.2,6.75)--(N32); \draw[line width=\edgewidth]  (N32)--(N22); 

\draw [line width=\nodewidth, fill=gray] (Min) circle [radius=\noderad] ; \draw [line width=\nodewidth, fill=gray] (Max) circle [radius=\noderad] ; 
\draw [line width=\nodewidth, fill=white] (N11) circle [radius=\noderad] ; \draw [line width=\nodewidth, fill=white] (N12) circle [radius=\noderad] ; \draw [line width=\nodewidth, fill=white] (N13) circle [radius=\noderad] ; 
\draw [line width=\nodewidth, fill=white] (N14) circle [radius=\noderad] ; \draw [line width=\nodewidth, fill=white] (N15) circle [radius=\noderad] ; \draw [line width=\nodewidth, fill=white] (N16) circle [radius=\noderad] ; 
\draw [line width=\nodewidth, fill=white] (N21) circle [radius=\noderad] ; \draw [line width=\nodewidth, fill=white] (N22) circle [radius=\noderad] ; \draw [line width=\nodewidth, fill=white] (N23) circle [radius=\noderad] ; 
\draw [line width=\nodewidth, fill=white] (N24) circle [radius=\noderad] ; \draw [line width=\nodewidth, fill=white] (N25) circle [radius=\noderad] ; \draw [line width=\nodewidth, fill=white] (N26) circle [radius=\noderad] ; 
\draw [line width=\nodewidth, fill=white] (N31) circle [radius=\noderad] ; \draw [line width=\nodewidth, fill=white] (N32) circle [radius=\noderad] ; 

\end{tikzpicture}
} } ;

\node at (9.2,-2.3) {(III)} ; 
\node at (9.2,0) 
{\scalebox{0.25}{
\begin{tikzpicture}

\coordinate (Min) at (4,0); \coordinate (Max) at (4,15);
\coordinate (N11) at (0,2); \coordinate (N12) at (0,5); \coordinate (N13) at (0,6); 
\coordinate (N14) at (0,9); \coordinate (N15) at (0,10); \coordinate (N16) at (0,13); 

\coordinate (N21) at (7,2); \coordinate (N22) at (7,5); 
\coordinate (N23a) at (6,6);  \coordinate (N24a) at (6,9); \coordinate (N23b) at (8,6); \coordinate (N24b) at (8,9); 
\coordinate (N25) at (7,10); \coordinate (N26) at (7,13); 

\draw[line width=\edgewidth]  (Min)--(N11); \draw[line width=\edgewidth]  (N11)--(0,2.8); 
\draw[line width=\edgewidth, loosely dotted]  (0,3)--(0,4); 
\draw[line width=\edgewidth]  (0,4.2)--(N12); \draw[line width=\edgewidth]  (N12)--(N13); \draw[line width=\edgewidth]  (N13)--(0,6.8); 
\draw[line width=\edgewidth, loosely dotted]  (0,7)--(0,8); \draw[line width=\edgewidth]  (0,8.2)--(N14); 
\draw[line width=\edgewidth]  (N14)--(N15); \draw[line width=\edgewidth]  (N15)--(0,10.8); 
\draw[line width=\edgewidth, loosely dotted]  (0,11)--(0,12); \draw[line width=\edgewidth]  (0,12.2)--(N16); \draw[line width=\edgewidth]  (N16)--(Max); 

\draw[line width=\edgewidth]  (Min)--(N21); \draw[line width=\edgewidth]  (N21)--(7,2.8); 
\draw[line width=\edgewidth, loosely dotted]  (7,3)--(7,4); 
\draw[line width=\edgewidth]  (7,4.2)--(N22); \draw[line width=\edgewidth]  (N22)--(N23a); \draw[line width=\edgewidth]  (N22)--(N23b); 
\draw[line width=\edgewidth]  (N23a)--(6,6.8); \draw[line width=\edgewidth]  (N23b)--(8,6.8); 
\draw[line width=\edgewidth, loosely dotted]  (6,7)--(6,8); \draw[line width=\edgewidth, loosely dotted]  (8,7)--(8,8); 
\draw[line width=\edgewidth]  (6,8.2)--(N24a); \draw[line width=\edgewidth]  (8,8.2)--(N24b); 
\draw[line width=\edgewidth]  (N24a)--(N25); \draw[line width=\edgewidth]  (N24b)--(N25); \draw[line width=\edgewidth]  (N25)--(7,10.8); 
\draw[line width=\edgewidth, loosely dotted]  (7,11)--(7,12); \draw[line width=\edgewidth]  (7,12.2)--(N26); \draw[line width=\edgewidth]  (N26)--(Max); 

\draw [line width=\nodewidth, fill=gray] (Min) circle [radius=\noderad] ; \draw [line width=\nodewidth, fill=gray] (Max) circle [radius=\noderad] ; 
\draw [line width=\nodewidth, fill=white] (N11) circle [radius=\noderad] ; \draw [line width=\nodewidth, fill=white] (N12) circle [radius=\noderad] ; 
\draw [line width=\nodewidth, fill=white] (N13) circle [radius=\noderad] ; \draw [line width=\nodewidth, fill=white] (N14) circle [radius=\noderad] ; 
\draw [line width=\nodewidth, fill=white] (N15) circle [radius=\noderad] ; \draw [line width=\nodewidth, fill=white] (N16) circle [radius=\noderad] ; 
\draw [line width=\nodewidth, fill=white] (N21) circle [radius=\noderad] ; \draw [line width=\nodewidth, fill=white] (N22) circle [radius=\noderad] ; 
\draw [line width=\nodewidth, fill=white] (N23a) circle [radius=\noderad] ; \draw [line width=\nodewidth, fill=white] (N24a) circle [radius=\noderad] ; 
\draw [line width=\nodewidth, fill=white] (N23b) circle [radius=\noderad] ; \draw [line width=\nodewidth, fill=white] (N24b) circle [radius=\noderad] ; 
\draw [line width=\nodewidth, fill=white] (N25) circle [radius=\noderad] ; \draw [line width=\nodewidth, fill=white] (N26) circle [radius=\noderad] ; 

\end{tikzpicture}
} } ;

\node at (2.3,-6.3) {(IV)} ; 
\node at (2.3,-4) 
{\scalebox{0.25}{
\begin{tikzpicture}
\coordinate (Min) at (4,0); \coordinate (Max) at (4,14);
\coordinate (N11) at (0,2); \coordinate (N12) at (0,5); \coordinate (N13) at (0,9); \coordinate (N14) at (0,12); 
\coordinate (N21) at (8,2); \coordinate (N22) at (8,5); \coordinate (N23) at (8,9); \coordinate (N24) at (8,12); 
\coordinate (N3) at (4,7);

\draw[line width=\edgewidth]  (Min)--(N11); \draw[line width=\edgewidth]  (N11)--(0,2.8); \draw[line width=\edgewidth, loosely dotted]  (0,3)--(0,4); 
\draw[line width=\edgewidth]  (0,4.2)--(N12); \draw[line width=\edgewidth]  (N12)--(N3); \draw[line width=\edgewidth]  (N3)--(N13); 
\draw[line width=\edgewidth]  (N13)--(0,9.8); \draw[line width=\edgewidth, loosely dotted]  (0,10)--(0,11); 
\draw[line width=\edgewidth]  (0,11.2)--(N14); \draw[line width=\edgewidth]  (N14)--(Max);
\draw[line width=\edgewidth]  (Min)--(N21); \draw[line width=\edgewidth]  (N21)--(8,2.8); \draw[line width=\edgewidth, loosely dotted]  (8,3)--(8,4); 
\draw[line width=\edgewidth]  (8,4.2)--(N22); \draw[line width=\edgewidth]  (N22)--(N3); \draw[line width=\edgewidth]  (N3)--(N23); 
\draw[line width=\edgewidth]  (N23)--(8,9.8); \draw[line width=\edgewidth, loosely dotted]  (8,10)--(8,11); 
\draw[line width=\edgewidth]  (8,11.2)--(N24); \draw[line width=\edgewidth]  (N24)--(Max);

\draw [line width=\nodewidth, fill=gray] (Min) circle [radius=\noderad] ; \draw [line width=\nodewidth, fill=gray] (Max) circle [radius=\noderad] ; 
\draw [line width=\nodewidth, fill=white] (N11) circle [radius=\noderad] ; \draw [line width=\nodewidth, fill=white] (N12) circle [radius=\noderad] ; 
\draw [line width=\nodewidth, fill=white] (N13) circle [radius=\noderad] ; \draw [line width=\nodewidth, fill=white] (N14) circle [radius=\noderad] ; 
\draw [line width=\nodewidth, fill=white] (N21) circle [radius=\noderad] ; \draw [line width=\nodewidth, fill=white] (N22) circle [radius=\noderad] ; 
\draw [line width=\nodewidth, fill=white] (N23) circle [radius=\noderad] ; \draw [line width=\nodewidth, fill=white] (N24) circle [radius=\noderad] ; 
\draw [line width=\nodewidth, fill=white] (N3) circle [radius=\noderad] ; 
\end{tikzpicture}
} } ;

\node at (6.9,-6.3) {(V)} ; 
\node at (6.9,-4) 
{\scalebox{0.25}{
\begin{tikzpicture}
\coordinate (Min) at (4,0); \coordinate (Max) at (4,14);
\coordinate (N11) at (0,2); \coordinate (N12) at (0,3); \coordinate (N13) at (0,11); \coordinate (N14) at (0,12); 
\coordinate (N21) at (4,2); \coordinate (N22) at (4,3); \coordinate (N23) at (4,11); \coordinate (N24) at (4,12); 
\coordinate (N31) at (8,2); \coordinate (N32) at (8,3); \coordinate (N33) at (8,11); \coordinate (N34) at (8,12); 

\draw[line width=\edgewidth]  (Min)--(N11); \draw[line width=\edgewidth]  (N11)--(N12); \draw[line width=\edgewidth]  (N12)--(0,3.8); 
\draw[line width=\edgewidth, loosely dotted]  (0,5)--(0,9); 
\draw[line width=\edgewidth]  (0,10.2)--(N13); \draw[line width=\edgewidth]  (N13)--(N14); \draw[line width=\edgewidth]  (N14)--(Max); 
\draw[line width=\edgewidth]  (Min)--(N21); \draw[line width=\edgewidth]  (N21)--(N22); \draw[line width=\edgewidth]  (N22)--(4,3.8); 
\draw[line width=\edgewidth, loosely dotted]  (4,5)--(4,9); 
\draw[line width=\edgewidth]  (4,10.2)--(N23); \draw[line width=\edgewidth]  (N23)--(N24); \draw[line width=\edgewidth]  (N24)--(Max); 
\draw[line width=\edgewidth]  (Min)--(N31); \draw[line width=\edgewidth]  (N31)--(N32); \draw[line width=\edgewidth]  (N32)--(8,3.8); 
\draw[line width=\edgewidth, loosely dotted]  (8,5)--(8,9); 
\draw[line width=\edgewidth]  (8,10.2)--(N33); \draw[line width=\edgewidth]  (N33)--(N34); \draw[line width=\edgewidth]  (N34)--(Max); 

\draw [line width=\nodewidth, fill=gray] (Min) circle [radius=\noderad] ; \draw [line width=\nodewidth, fill=gray] (Max) circle [radius=\noderad] ; 
\draw [line width=\nodewidth, fill=white] (N11) circle [radius=\noderad] ; \draw [line width=\nodewidth, fill=white] (N12) circle [radius=\noderad] ; 
\draw [line width=\nodewidth, fill=white] (N13) circle [radius=\noderad] ; \draw [line width=\nodewidth, fill=white] (N14) circle [radius=\noderad] ; 
\draw [line width=\nodewidth, fill=white] (N21) circle [radius=\noderad] ; \draw [line width=\nodewidth, fill=white] (N22) circle [radius=\noderad] ; 
\draw [line width=\nodewidth, fill=white] (N23) circle [radius=\noderad] ; \draw [line width=\nodewidth, fill=white] (N24) circle [radius=\noderad] ; 
\draw [line width=\nodewidth, fill=white] (N31) circle [radius=\noderad] ; \draw [line width=\nodewidth, fill=white] (N32) circle [radius=\noderad] ; 
\draw [line width=\nodewidth, fill=white] (N33) circle [radius=\noderad] ; \draw [line width=\nodewidth, fill=white] (N34) circle [radius=\noderad] ; 
\end{tikzpicture}
} } ;
\end{tikzpicture}
\end{center}
\caption{Hasse diagrams $\calH(\widehat{P})$ for Gorenstein Hibi rings $R$ with $\Cl(R)\cong\ZZ^2$}
\label{figure_poset_Z2}
\end{figure}

\begin{proof}
We consider $\calH(\widehat{P})$ as a simple graph, and denote it by $\calG$. 
Let $V(\calG)$ be the set of vertices of $\calG$, and $E(\calG)$ be the set of edges of $\calG$, especially $V(\calG)$ coincides with $\widehat{P}$. 
Since $\Cl(R)\cong\ZZ^2$, we have that $|E(\calG)|-|V(\calG)|=1$ (see Theorem~\ref{classgroup_Hibi}). 
For $v\in V(\calG)$, $\deg_\calG(v)$ denotes the \emph{degree} (or \emph{valency}) of $v$, which is the number of edges incident to $v$. 

By the construction of $\calH(\widehat{P})$, we see that $\calG$ is connected. Also, there are no vertices whose degree is $1$. 
In fact, since any vertex $v\in V(\calG){\setminus}\{\hat{0},\hat{1}\}=P$ satisfies $\hat{0}\prec v\prec\hat{1}$, 
we have that $\deg_\calG(v)\neq 1$. If $\deg_\calG(\hat{0})=1$ or $\deg_\calG(\hat{1})=1$, then the associated Hibi ring is obtained as the polynomial extension of a certain Hibi ring. 
Thus, we conclude that $\deg_\calG(v)\ge 2$ for any $v\in\calG$. 

We now describe all the possible connected graphs with $|E(\calG)|-|V(\calG)|=1$ and $\deg_\calG(v)\ge 2$ for any $v\in\calG$. 
By the handshaking lemma (see e.g., \cite[p.4]{Bol}), we see that $2|E(\calG)|=\sum_{v\in V(\calG)}\deg_\calG(v)$, and hence we have that 
$$
|V(\calG)|+1=\sum_{v\in V(\calG)}\frac{\deg_\calG(v)}{2}
$$
Moreover, since $\deg_\calG(v)\ge 2$ for any $v\in\calG$, only the following two situations may happen: 
\begin{itemize}
\item[(i)] $\calG$ has exactly two vertices with degree $3$ and all the others have degree $2$, or 
\item[(ii)] $\calG$ has a unique vertex with degree $4$ and all the others have degree $2$. 
\end{itemize}

\smallskip

\noindent{\bf The case (i)}: Let $v$ and $w$ be the vertices of $\calG$ with degree $3$. Then, $\deg_\calG u=2$ for any $u\in V(\calG){\setminus}\{v,w\}$. 
\begin{itemize}
\item Let $v=\hat{0}$ and $w=\hat{1}$. Then it is easy to see that $\calG$ looks like (V). 
\item Let $v=\hat{1}$ and $w\not=\hat{0}$, and hence $w\in V(\calG){\setminus}\{\hat{0},\hat{1}\}$. Since other vertices have degree $2$, it follows that $\calG$ looks like (I). 
Similarly, when $v=\hat{0}$ and $w\not=\hat{1}$, we obtain the graph that is the upside-down of (I), and it gives the same Hibi ring up to isomorphism. 
\item Let $v, w\in V(\calG){\setminus}\{\hat{0},\hat{1}\}$. Then, $\deg_\calG(\hat{0})=\deg_\calG(\hat{1})=2$. 
Let $v_1, v_2$ (resp. $w_1, w_2$) be the vertices which are incident to $\hat{0}$ (resp. $\hat{1}$), and assume that there is a maximal chain 
$v_1=u_1\prec u_2\prec\cdots\prec u_\ell=w_1$ with $u_i\in V(\calG){\setminus}\{\hat{0},\hat{1}\}$ for any $i=1,\cdots, \ell$. 
     \begin{itemize}
     \item We suppose that one of $v,w$ is contained in $\{u_1,\cdots,u_\ell\}$. We assume that $v\in\{u_1,\cdots,u_\ell\}$, and let $v=u_i$. 
     Then, $\deg_\calG u_i=3$ and $\deg_\calG u_j=2$ for any $j\neq i$. 
     Let $U(v)$ (resp. $D(v)$) be the set of edges incident to $v$ and connecting $v$ to $v^\prime$ with $v\prec v^\prime$ (resp. $v^\prime\prec v$). 
     If $|U(v)|=2$ and $|D(v)|=1$, then $|U(w)|=1$ and $|D(w)|=2$ holds. 
     Thus, there should be a maximal chain $v_2=u_1^\prime\prec\cdots\prec u_\ell^\prime=w_2$ with $w=u_i^\prime$ for some $1\le i\le\ell$, and hence we see that $\calG$ looks like (II). 
     If $|U(v)|=1$ and $|D(v)|=2$, we also have the same type by the same argument. 
     \item If $v,w\not\in\{u_1,\cdots,u_\ell\}$ then $\deg_\calG u_i=2$ for any $i=1,\cdots, \ell$. Thus, there should be a maximal chain $v_2=u_1^\prime\prec\cdots\prec u_{\ell}^\prime=w_2$ and $\deg_\calG u_s^\prime=\deg_\calG u_t^\prime=3$ for some $1\le s< t\le\ell$. Thus, we see that $\calG$ looks like (III). 
     \item If $v, w\in\{u_1,\cdots,u_\ell\}$, then we have $\deg_\calG u_s=\deg_\calG u_t=3$ for some $1\le s< t\le\ell$ and $\deg_\calG u_i=2$ for any $i\neq s,t$. We may assume that $v=u_s, w=u_t$, and hence $v\prec w$. 
     \begin{itemize}
     \item If $|U(v)|=2$ and $|D(v)|=1$, then $|U(w)|=1$ and $|D(w)|=2$ holds. Thus, in this case, we also have the graph $\calG$ looks like (III). 
     \item If $|U(v)|=1$ and $|D(v)|=2$, then $|U(w)|=2$ and $|D(w)|=1$. In this case, any maximal chain contains edges corresponding to $U(v),D(w)$ (and edges connecting them). 
     Thus, the associated Hibi ring is the polynomial extension of a certain Hibi ring. (More precisely, we see that it is the polynomial extension of a Hibi ring associated with a poset of the type (IV).) 
     \end{itemize}
     \end{itemize}
\end{itemize}
     
\noindent{\bf The case (ii)}: Let $v$ be the vertex of $\calG$ with degree $4$. Then, it is easy to see that $v\not\in\{\hat{0},\hat{1}\}$. 
Moreover, we also see that $|U(v)|=|D(v)|=2$, and hence $\calG$ looks like (IV). 
\end{proof}

We then show our main theorem. 

\begin{theorem}
\label{main_thm_HibiZ2}
Let $R=\kk[P]$ be a Gorenstein Hibi ring with $\Cl(R)\cong\ZZ^2$, in which case $\calH(\widehat{P})$ is one of {\rm(I)--(V)} in Figure~\ref{figure_poset_Z2}. 
For each type of these posets, we define the subset $\calL$ of the characters as follows. 
\begin{itemize}
\item [$\bullet$] When $\calH(\widehat{P})$ is the type {\rm(I)}, we define the number of edges as in Figure~\ref{poset_typeI}, and let 
\[
\calL=\{\chi=(c_1,c_2)\in\rmX(G)\mid 0\le c_1\le m+n+1,\, 0\le c_2\le n \}. 
\]
\item [$\bullet$] When $\calH(\widehat{P})$ is the type {\rm(II)}, we define the number of edges as in Figure~\ref{poset_typeII}, and let 
\[
\calL=\{\chi=(c_1,c_2)\in\rmX(G)\mid 0\le c_1\le\ell+m,\, 0\le c_2\le m+n\}. 
\]
\item [$\bullet$] When $\calH(\widehat{P})$ is the type {\rm(III)}, we define the number of edges as in Figure~\ref{poset_typeIII}, and let 
\[
\calL=\{\chi=(c_1,c_2)\in\rmX(G)\mid 0\le c_1\le \ell+m+n+1,\, 0\le c_2\le m-1\}. 
\]
\item [$\bullet$] When $\calH(\widehat{P})$ is the type {\rm(IV)}, we define the number of edges as in Figure~\ref{poset_typeIV}, and let 
\[
\calL=\{\chi=(c_1,c_2)\in\rmX(G)\mid 0\le c_1\le m,\, 0\le c_2\le n\}. 
\]
\item [$\bullet$] When $\calH(\widehat{P})$ is the type {\rm(V)}, we define the number of edges as in Figure~\ref{poset_typeV}, and let 
\[
\calL=\{\chi=(c_1,c_2)\in\rmX(G)\mid 0\le c_1\le n+1,\, 0\le c_2\le n+1 \}. 
\]
\end{itemize}
Then, we have that $\End_R(M_\calL)$ is a splitting NCCR of $R$, where $M_\calL\coloneqq\bigoplus_{\chi\in\calL}M_\chi$ and $M_\chi=(S\otimes_\kk V_\chi)^G$ is the module of covariants associated to $\chi$. 
\end{theorem}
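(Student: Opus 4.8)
The plan is to verify the two conditions of Remark~\ref{rem_NCCR}(a): since $R$ is Gorenstein, $\End_R(M_\calL)$ is an NCCR as soon as (1) it is an MCM $R$-module and (2) $\gldim\End_R(M_\calL)<\infty$. The splitting property comes for free, because $M_\calL=\bigoplus_{\chi\in\calL}M_\chi$ is by construction a direct sum of the rank one reflexive modules of covariants $M_\chi$, and the points of the box $\calL\subset\rmX(G)\cong\ZZ^2$ are distinct, so the summands are pairwise non-isomorphic. I would treat the five families (I)--(V) in parallel, beginning each case with the explicit weight data: reading off the classes $\beta_i\in\rmX(G)\cong\ZZ^2$ of the prime divisors $\calD_{e_i}$ from the relations (\ref{relation_div_hibi}) (exactly as in the worked example following Theorem~\ref{conic_Hibi}), and recording the conic polytope $\calC(P)$ of Theorem~\ref{conic_Hibi}.

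For the MCM condition I would first reduce it to a statement about single modules of covariants. Since $\Hom_R(M_\chi,M_{\chi'})\cong M_{\chi'-\chi}$ for modules of covariants over a toric ring, we have $\End_R(M_\calL)\cong\bigoplus_{\chi,\chi'\in\calL}M_{\chi'-\chi}$ as $R$-modules, and a finite direct sum is MCM exactly when each summand is. Hence it suffices to show that $M_{\psi}$ is MCM for every $\psi$ in the difference set $\calL-\calL$. As $\calL$ is a lattice box, $\calL-\calL$ is again an explicit box in $\ZZ^2$, and I would check each of its points: those lying in $\calC(P)$ give conic, hence MCM, modules by Theorem~\ref{conic_Hibi}, while the remaining (boundary) points are handled by the complete list of rank one MCM modules established in Section~\ref{sec_MCM_HibiZ2}. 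This is precisely where the non-conic MCM modules enter, so the MCM half of the argument becomes a bookkeeping check of $\calL-\calL$ against that classification.

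For the finiteness of global dimension I would invoke Lemmas~\ref{key_lem1} and~\ref{key_lem2}, which provide, for an arbitrary toric ring, a combinatorial sufficient condition for $\End_R(M_\calL)$ to have finite global dimension phrased in terms of the weight configuration $\{\beta_i\}$ and the chosen set $\calL$. Concretely, I would verify the hypothesis of Lemma~\ref{key_lem2} for each of the five families by exhibiting, using the $\beta_i$, the requisite exact sequences among the $M_\chi$, so that each vertex simple module of $\End_R(M_\calL)$ acquires a finite projective resolution in $\add_R M_\calL$. Because the defining inequalities of $\calL$ in each type are tailored to the corresponding $\beta_i$, this verification is carried out type by type but is otherwise mechanical once the weight data are fixed.

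The main obstacle I anticipate is the case analysis in the global-dimension step: the hypothesis of Lemma~\ref{key_lem2} is a genuinely combinatorial condition whose verification depends on the precise shape of the weight vectors and of the box $\calL$, and these differ across (I)--(V), so there is no single uniform computation. A secondary difficulty is confirming that the difference set $\calL-\calL$ really lands in the MCM region: the edge cases on the boundary of $\calC(P)$ are exactly the non-conic MCM modules, and establishing MCM-ness there rests on the local-cohomology/combinatorial analysis underlying Section~\ref{sec_MCM_HibiZ2}. A cleaner route would package all five families into one statement about the weight arrangement, but the asymmetry of the $\ZZ^2$ class group makes a fully uniform treatment unlikely, so I expect the type-by-type organization to be unavoidable.
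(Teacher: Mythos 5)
Your proposal follows essentially the same route as the paper: splitting is automatic from the construction, MCM-ness of $\End_R(M_\calL)\cong\bigoplus_{\chi,\chi'\in\calL}M_{\chi'-\chi}$ is checked against the classification of rank one MCM modules in Section~\ref{sec_MCM_HibiZ2}, and finiteness of global dimension is established via Lemma~\ref{key_lem1} and the Koszul complexes of Lemma~\ref{key_lem2}. The only substance you leave implicit is the concrete mechanism for the last step, which the paper carries out by an induction over strips of characters separated from $\calL$ by suitable one-parameter subgroups, covering the conic region and then invoking the argument of \cite[Subsection~10.3]{SpVdB} to conclude for all of $\rmX(G)$.
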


We note that applying the mutations (see \cite[Section~6]{IW1}, \cite[Section~4]{HN}) repeatedly to the module $M_\calL$ in Theorem~\ref{main_thm_HibiZ2}, 
we can obtain a lot of modules giving NCCRs of $R$. 
We remark that the mutated one is not necessarily splitting. 
On the other hand, for a module $M$ giving a splitting NCCR, $(M\otimes_RI)^{**}$ also gives a splitting NCCR where $I$ is a divisorial ideal and $(-)^*=\Hom_R(-,R)$. 
Therefore, starting from a given one, we can obtain infinitely many modules giving splitting NCCRs. 

\medskip

We will prove Theorem~\ref{main_thm_HibiZ2} by a case-by-case check along the classification of posets in Lemma~\ref{poset_Z2}.  
Since the argument is the same for each type, we will give the precise proof for the type (I) in Subsection~\ref{subsec_proof_typeI} and we only mention an outline for other cases in Subsection~\ref{subsec_proof_othertype}. 
Also, in the proof of this theorem, we use the classification of rank one MCM modules for each Hibi ring, thus we will give this classification in Section~\ref{sec_MCM_HibiZ2}. 

We note that if $\calH(\widehat{P})$ is the type (IV), then the Hibi ring $R$ is quasi-symmetric, thus the assertion also follows from \cite[Theorem~1.19]{SpVdB}. 
On the other hand, if $\calH(\widehat{P})$ is the type (V), then the Hibi ring $R$ is isomorphic to the Segre product of three polynomial rings with $n+2$ variables (see \cite[Example~2.6]{HN}), in which case Theorem~\ref{main_thm_HibiZ2} is a special case of \cite[Theorem~3.6]{HN}. 

\subsection{Methods for determining the global dimension}
\label{subsec_keylem}

In order to construct NCCRs, some Lemmas in \cite[Section~10]{SpVdB} are main ingredients, thus we introduce several notions following that paper. 

\begin{observation}
\label{obs_GSmod}
Although we are interested in Hibi rings, the following argument holds in more general context, thus let $R$ be a toric ring with $\Cl(R)\cong\ZZ^r$, and use the same notation as in Subsection~\ref{sec_preliminary_toric}.  
We consider the category $\calA=\mc(G,S)$ of finitely generated $(G,S)$-module (i.e., $G$-equivariant $S$-modules). 
Then, any projective generator of $\calA$ can be given by $P_\chi\coloneqq V_\chi\otimes_\kk S$ with $\chi\in\rmX(G)$. 
For a finite subset $\calL$ of $\rmX(G)$, we set 
$$
P_\calL\coloneqq\bigoplus_{\chi\in\calL}P_\chi \text{\quad and \quad} \Lambda_\calL\coloneqq\End_\calA(P_\calL). 
$$
For $\chi\in\rmX(G)$, we set $P_{\calL,\chi}\coloneqq\Hom_\calA(P_\calL,P_\chi)$, and this is a right projective $\Lambda_\calL$-module if $\chi\in\calL$. 
Using an equivalence $(-)^G:\refl(G,S)\xrightarrow{\simeq}\refl(R)$ (see e.g., \cite[Lemma~3.3]{SpVdB}) between the category of reflexive $(G,S)$-modules and that of reflexive $R$-modules, we see that 
$$
\Hom_\calA(P_{\chi_i},P_{\chi_j})\cong(\Hom_k(V_{\chi_i},V_{\chi_j})\otimes_\kk S)^G\cong\Hom_R(P_{\chi_i}^G,P_{\chi_j}^G)=\Hom_R(M_{\chi_i},M_{\chi_j}). 
$$
Thus, to determine the global dimension of $\End_R(P_\calL^G)=\End_R(M_\calL)$, we may only consider that of $\Lambda_\calL$. 
\end{observation}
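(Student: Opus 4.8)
The plan is to verify the three assertions folded into the Observation in turn---that the $P_\chi$ form projective generators of $\calA$, that the displayed chain of isomorphisms holds, and that the final reduction of global dimension is legitimate---leaning chiefly on the linear reductivity of the torus $G$ and on the equivalence $(-)^G\colon\refl(G,S)\xrightarrow{\simeq}\refl(R)$ of \cite[Lemma~3.3]{SpVdB}.

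First I would record that, since $G\cong(\kk^\times)^r$ is a torus, every finite-dimensional $G$-representation splits as a direct sum of the one-dimensional $V_\chi$, so that $\calA=\mc(G,S)$ is identified with the category of finitely generated $\rmX(G)$-graded $S$-modules; under this identification $P_\chi=V_\chi\otimes_\kk S$ is the free graded module of rank one with generator in degree $\chi$. To see that each $P_\chi$ is projective I would invoke the free-forgetful adjunction: for any $N\in\calA$ one has $\Hom_\calA(P_\chi,N)\cong\Hom_G(V_\chi,N)\cong N_\chi$, the weight-$\chi$ component, and $N\mapsto N_\chi$ is exact because $G$ is linearly reductive. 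That $\{P_\chi\}_{\chi\in\rmX(G)}$ generates is immediate, a finite set of homogeneous generators of $N$ producing a surjection from a finite sum of $P_\chi$'s. Consequently $P_\calL$ is projective, and for $\chi\in\calL$ the object $P_{\calL,\chi}=\Hom_\calA(P_\calL,P_\chi)$ is a direct summand of $\Lambda_\calL=\End_\calA(P_\calL)$ (via the inclusion of $P_\chi$ as a summand of $P_\calL$), hence a projective right $\Lambda_\calL$-module.

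Next, for the displayed isomorphisms I would apply the same adjunction with $N=P_{\chi_j}$: an $S$-linear $G$-equivariant map $P_{\chi_i}\to P_{\chi_j}$ is determined by the image of $V_{\chi_i}\otimes 1$, which gives the $R$-linear identification $\Hom_\calA(P_{\chi_i},P_{\chi_j})\cong(\Hom_\kk(V_{\chi_i},V_{\chi_j})\otimes_\kk S)^G$, with $R=S^G$ acting through multiplication on $S$. Since $P_{\chi_i},P_{\chi_j}$ are free of rank one over $S$ they are reflexive $(G,S)$-modules, and the internal hom $\Hom_S(P_{\chi_i},P_{\chi_j})\cong\Hom_\kk(V_{\chi_i},V_{\chi_j})\otimes_\kk S$ is again reflexive; its $G$-invariants are precisely the left-hand space, so full faithfulness of $(-)^G$ on $\refl(G,S)$ yields the second isomorphism $(\Hom_\kk(V_{\chi_i},V_{\chi_j})\otimes_\kk S)^G\cong\Hom_R(P_{\chi_i}^G,P_{\chi_j}^G)$. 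The concluding equality is simply the definition $P_\chi^G=(V_\chi\otimes_\kk S)^G=M_\chi$.

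Finally I would sum the displayed isomorphism over all pairs in $\calL\times\calL$ to obtain $\Lambda_\calL=\End_\calA(P_\calL)\cong\End_R(P_\calL^G)=\End_R(M_\calL)$; because $(-)^G$ is a fully faithful functor it preserves composition, so this is an isomorphism of $R$-algebras, not merely of $R$-modules, whence $\gldim\Lambda_\calL=\gldim\End_R(M_\calL)$. The argument is essentially formal, and the only genuine care needed is to confirm that each identification is compatible with composition---that is, that the adjunction isomorphism and the invariants functor are \emph{multiplicative}---so that the transported invariant really is the global dimension of the endomorphism algebra; this bookkeeping, together with verifying the reflexivity hypotheses that license the use of \cite[Lemma~3.3]{SpVdB}, is the main (though routine) obstacle.
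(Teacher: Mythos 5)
Your proposal is correct and follows essentially the same route as the paper: the paper's Observation is itself little more than the assertion that the $P_\chi$ are projective generators of $\mc(G,S)$, that the displayed hom-isomorphisms follow from the equivalence $(-)^G\colon\refl(G,S)\xrightarrow{\simeq}\refl(R)$ of \cite[Lemma~3.3]{SpVdB}, and that the resulting ring isomorphism $\Lambda_\calL\cong\End_R(M_\calL)$ identifies the global dimensions, all of which you verify in detail via the weight-space adjunction and multiplicativity of the invariants functor. The only hypothesis you flag as ``bookkeeping''---the applicability of \cite[Lemma~3.3]{SpVdB}---is its genericity assumption, which the paper has already checked for these torus actions in Subsection~\ref{sec_preliminary_toric}, so nothing is missing.
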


In particular, the following lemma is useful. 

\begin{lemma}[{see \cite[Lemma~10.1]{SpVdB}}]
\label{key_lem1}
We have that $\gldim\Lambda_\calL<\infty$ if and only if $\pdim_{\Lambda_\calL}P_{\calL,\chi}<\infty$ for all $\chi\in\rmX(G)$. 
\end{lemma}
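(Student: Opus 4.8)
The forward implication is immediate: if $\gldim\Lambda_\calL<\infty$, then every $\Lambda_\calL$-module has finite projective dimension, and in particular so does each right module $P_{\calL,\chi}$. So the content lies in the converse, and the plan is to bound the projective dimensions of the \emph{simple} $\Lambda_\calL$-modules. This suffices because $\Lambda_\calL=\End_R(M_\calL)$ is a module-finite algebra over the graded-local ring $R$; after passing to the completion of $R$ at its graded maximal ideal (which does not change the global dimension of a module-finite algebra) one may invoke the standard fact that the global dimension of a Noetherian semiperfect ring equals the supremum of the projective dimensions of its simple modules. Since $\calL$ is finite, there are only finitely many such simples, so this supremum is finite as soon as each summand is.

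First I would identify the simples. For $\chi\in\calL$, regard $V_\chi$ as an object of $\calA=\mc(G,S)$ on which the augmentation ideal $S_+$ acts by zero. Computing $\Hom_\calA(P_{\chi'},V_\chi)=\Hom_{(G,S)}(V_{\chi'}\otimes_\kk S,V_\chi)$, one sees that a homomorphism out of the free module $P_{\chi'}=V_{\chi'}\otimes_\kk S$ is determined by a $G$-equivariant map $V_{\chi'}\to V_\chi$ (higher $S$-degrees land in $S_+V_\chi=0$), so this $\Hom$-space is $\kk$ when $\chi'=\chi$ and $0$ otherwise. Hence the exact functor $F\coloneqq\Hom_\calA(P_\calL,-)$ sends $V_\chi$ to the simple right $\Lambda_\calL$-module $S_\chi$ supported at the vertex $\chi$, and as $\chi$ runs over $\calL$ these exhaust the simples.

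Next I would resolve each $V_\chi$ inside $\calA$. Writing $S=\Sym W$ with $W=\bigoplus_{i=1}^n V_{\beta_i}$, the $G$-equivariant Koszul complex provides a finite projective resolution
$$0\to V_\chi\otimes_\kk\textstyle\bigwedge^nW\otimes_\kk S\to\cdots\to V_\chi\otimes_\kk W\otimes_\kk S\to V_\chi\otimes_\kk S\to V_\chi\to 0$$
of length $n$ in $\calA$, each term being a finite direct sum of projective generators $P_{\chi'}$ with $\chi'$ in the finite set $\{\chi+\beta_{i_1}+\cdots+\beta_{i_k}\mid 1\le i_1<\cdots<i_k\le n\}\subset\rmX(G)$. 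Since $P_\calL$ is projective, $F$ is exact, so applying it produces a finite resolution
$$0\to F\bigl(V_\chi\otimes\textstyle\bigwedge^nW\otimes S\bigr)\to\cdots\to F(V_\chi\otimes S)\to S_\chi\to 0$$
of the simple module $S_\chi$ whose terms are finite direct sums of the modules $P_{\calL,\chi'}$.

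Finally, invoking the hypothesis that $\pdim_{\Lambda_\calL}P_{\calL,\chi'}<\infty$ for all $\chi'\in\rmX(G)$, every term of this resolution has finite projective dimension, whence $\pdim_{\Lambda_\calL}S_\chi\le n+\max_{\chi'}\pdim_{\Lambda_\calL}P_{\calL,\chi'}<\infty$, the maximum being over the finitely many $\chi'$ that occur. Taking the supremum over the finitely many $\chi\in\calL$ yields $\gldim\Lambda_\calL<\infty$. The step I expect to be the main obstacle is the reduction in the first paragraph, namely justifying that the global dimension of the module-finite, merely graded (rather than complete local) algebra $\Lambda_\calL$ is detected by the projective dimensions of its finitely many graded simple modules; this needs a graded Nakayama argument or a careful completion argument, after which the exactness of $F$ together with the equivariant Koszul resolution makes the remainder routine.
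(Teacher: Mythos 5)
Your argument is correct and is essentially the proof of the cited result \cite[Lemma~10.1]{SpVdB}, which the paper invokes without reproducing: one reduces to the (finitely many) graded simples of $\Lambda_\calL$, identifies them as $\Hom_\calA(P_\calL,V_\chi)$ for $\chi\in\calL$, and resolves $V_\chi$ by the $G$-equivariant Koszul complex so that the exact functor $\Hom_\calA(P_\calL,-)$ produces a finite resolution of each simple by direct sums of the $P_{\calL,\chi'}$. The reduction you flag as the main obstacle is the standard fact that for a Noetherian $\NN$-graded algebra with semisimple degree-zero part (equivalently, after completing $R$ at its graded maximal ideal, a semiperfect algebra) the global dimension is the supremum of the projective dimensions of the simples, so no genuine gap remains.
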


We then consider a one-parameter subgroup $\lambda^\bfb:\kk^\times\rightarrow G=(\kk^\times)^r$ of $G$ which is a group homomorphism defined as $\lambda^\bfb(g)=(g^{b_1},\cdots,g^{b_r})$ for $g\in G$ and $\bfb=(b_1,\cdots,b_r)\in\ZZ^r$. 
In particular, the group $\rmY(G)$ of one-parameter subgroups of $G$ is isomorphic to $\ZZ^r$ by identifying a one-parameter subgroup $\lambda^\bfb$ with its weight $\bfb\in\ZZ^r$. 
The following notion is important for studying the global dimension of $\Lambda_\calL$. 

\begin{definition}
\label{def_separated}
For a finite subset $\calL\subset\rmX(G)$, we say that $\chi\in\rmX(G)$ is \emph{separated} from $\calL$ by $\lambda\in\rmY(G)_\RR\coloneqq\rmY(G)\otimes_\ZZ\RR$ if $\langle\lambda,\chi\rangle<\langle\lambda,\nu\rangle$ for any $\nu\in\calL$. 
\end{definition}

For $\lambda\in\rmY(G)$, let $K_\lambda$ be the subspace of $W=\bigoplus_{i=1}^nV_{\beta_i}$ spanned by representations corresponding to $\beta_{i_j}$ with $\langle\lambda,\beta_{i_j}\rangle>0$, 
and let $d_\lambda\coloneqq\dim_\kk K_\lambda$. 
We then consider the Koszul resolution:  
$$
0\longrightarrow \wedge^{d_\lambda}K_\lambda\otimes_\kk S \longrightarrow\wedge^{d_\lambda-1}K_\lambda\otimes_\kk S \longrightarrow \cdots \longrightarrow S \longrightarrow S(W/K_\lambda)\longrightarrow 0.
$$ 
Applying $(V_\chi\otimes_\kk-)$ to this sequence, we have the following exact sequence: 
\begin{align}
\label{chi_sequence}
C_{\lambda,\chi}: 0\longrightarrow (V_\chi\otimes\wedge^{d_\lambda}K_\lambda)\otimes S \overset{\delta_{d_\lambda}}{\longrightarrow}(V_\chi&\otimes\wedge^{d_\lambda-1}K_\lambda)\otimes S \overset{\delta_{d_{\lambda-1}}}{\longrightarrow} \cdots \\ 
&\cdots\overset{\delta_1}{\longrightarrow} V_\chi\otimes S \longrightarrow V_\chi\otimes S(W/K_\lambda)\longrightarrow 0. \nonumber
\end{align}
Here, we note that for $p=1,\cdots, d_\lambda$ the $(G,S)$-module $(V_\chi\otimes_\kk\wedge^pK_\lambda)\otimes_\kk S$ is decomposed as the direct sum of $(G,S)$-modules $P_\mu$ 
with the form $\mu=\chi+\beta_{i_1}+\cdots+\beta_{i_p}$ where $\{i_1, \cdots, i_p\}\subset \{1, \cdots, n\}$, 
$i_j\neq i_{j^\prime}$ if $j\neq j^\prime$, and $\langle\lambda,\beta_{i_j}\rangle>0$. 
Then we have the following lemma. 

\begin{lemma}[{see \cite[Lemma~10.2]{SpVdB}}]
\label{key_lem2}
We assume that $\chi\in\rmX(G)$ is separated from $\calL$ by $\lambda\in\rmY(G)_\RR$. 
Then, we see that the complex $C_{\calL,\lambda,\chi}=\Hom_\calA(P_\calL,C_{\lambda,\chi})$ is acyclic. 
In addition, the $0$-th term of $C_{\calL,\lambda,\chi}$ is $P_{\calL,\chi}$ and for $p=1,\cdots, d_\lambda$ 
the $-p$-th term is the direct sum of $P_{\calL,\mu}$ with the form 
$$
\mu=\chi+\beta_{i_1}+\cdots+\beta_{i_p}
$$
where $\{i_1, \cdots, i_p\}\subset \{1, \cdots, n\}$, $i_j\neq i_{j^\prime}$ if $j\neq j^\prime$, and $\langle\lambda,\beta_{i_j}\rangle>0$. 
\end{lemma}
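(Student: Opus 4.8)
The plan is to obtain $C_{\calL,\lambda,\chi}$ by applying the functor $\Hom_\calA(P_\calL,-)$ to the exact sequence $C_{\lambda,\chi}$ of (\ref{chi_sequence}), and to argue that the separation hypothesis serves exactly to annihilate the ``tail'' (augmentation term) of that resolution, leaving an acyclic complex. First I would record the two structural facts that make $\Hom_\calA(P_\calL,-)$ behave well. The sequence $C_{\lambda,\chi}$ is exact because it is $V_\chi\otimes_\kk(-)$ applied to the Koszul resolution of $S(W/K_\lambda)=\Sym(W/K_\lambda)\cong S/(K_\lambda S)$ over $S=\Sym W$, the linear forms spanning $K_\lambda$ forming a regular sequence of variables, and tensoring over $\kk$ with the finite-dimensional space $V_\chi$ preserves exactness. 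Moreover, since each $P_\nu=V_\nu\otimes_\kk S$ is a projective object of $\calA$, the functor $\Hom_\calA(P_\calL,-)$ is exact on $\calA$. Applying it to $C_{\lambda,\chi}$ therefore yields an exact complex whose last term is $\Hom_\calA(P_\calL,V_\chi\otimes S(W/K_\lambda))$.

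Next I would identify the terms. Decomposing $\wedge^p K_\lambda$ into the lines spanned by $V_{\beta_{i_1}}\otimes\cdots\otimes V_{\beta_{i_p}}$ with $\langle\lambda,\beta_{i_j}\rangle>0$ and $i_1<\cdots<i_p$, the $(G,S)$-module $(V_\chi\otimes_\kk\wedge^pK_\lambda)\otimes_\kk S$ splits as $\bigoplus_\mu P_\mu$ over $\mu=\chi+\beta_{i_1}+\cdots+\beta_{i_p}$, exactly as recorded before the statement. Using $\Hom_\calA(P_\calL,P_\mu)=P_{\calL,\mu}$ by definition, the $-p$-th term of the resulting complex is $\bigoplus_\mu P_{\calL,\mu}$ with $\mu$ of the asserted form, while the $0$-th term is $\Hom_\calA(P_\calL,V_\chi\otimes S)=P_{\calL,\chi}$.

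The heart of the argument, and the step where the separation hypothesis enters, is to show that the augmentation term vanishes, i.e. $\Hom_\calA(P_\calL,V_\chi\otimes S(W/K_\lambda))=0$. Since $G$ is a torus, for each $\nu\in\calL$ one has $\Hom_\calA(P_\nu,N)=N_\nu$, the $\nu$-weight space, so it suffices to check that no weight of $V_\chi\otimes S(W/K_\lambda)$ lies in $\calL$. The weights of $S(W/K_\lambda)=\Sym\big(\bigoplus_{\langle\lambda,\beta_i\rangle\le0}V_{\beta_i}\big)$ are the sums $\sum_i a_i\beta_i$ with $a_i\ge0$ ranging over indices with $\langle\lambda,\beta_i\rangle\le0$; hence every weight of $V_\chi\otimes S(W/K_\lambda)$ has the form $\chi+\sum_i a_i\beta_i$ and satisfies
\[
\Big\langle\lambda,\ \chi+\sum_i a_i\beta_i\Big\rangle=\langle\lambda,\chi\rangle+\sum_i a_i\langle\lambda,\beta_i\rangle\le\langle\lambda,\chi\rangle.
\]
On the other hand, separation gives $\langle\lambda,\chi\rangle<\langle\lambda,\nu\rangle$ for every $\nu\in\calL$, so such a weight can never coincide with an element of $\calL$. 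This forces the augmentation term to be zero, and the exactness of the full complex then collapses to acyclicity of the truncated complex $C_{\calL,\lambda,\chi}$, with $P_{\calL,\chi}$ as its $0$-th term.

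I expect the only genuine obstacle to lie in the bookkeeping of this last paragraph: one must keep the sign convention for $K_\lambda$ (spanned by the $\beta_i$ with \emph{positive} pairing, so that $W/K_\lambda$ collects those with non-positive pairing) consistent with the direction of the separating inequality, since it is precisely this matching that makes the weight estimate and the definition of ``separated'' line up. Everything else is the standard mechanism of applying an exact $\Hom$-functor to a Koszul resolution and reading off the terms.
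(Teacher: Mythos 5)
Your proof is correct, and it is essentially the argument of the cited source: the paper itself gives no proof of this lemma but quotes it from \cite[Lemma~10.2]{SpVdB}, whose proof runs exactly as you describe --- exactness of the Koszul resolution of $\Sym(W/K_\lambda)$, exactness of $\Hom_\calA(P_\calL,-)$ on projectives, and vanishing of the augmentation term $\Hom_\calA(P_\calL,V_\chi\otimes S(W/K_\lambda))$ via the weight estimate forced by the separation hypothesis. Your specialization to the torus case (where $\Hom_\calA(P_\nu,N)$ is just the $\nu$-weight space) is the correct simplification of their general reductive-group argument, and the bookkeeping of the terms $P_{\calL,\mu}$ matches the statement.
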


\medskip

\subsection{Splitting NCCRs for the type (I)}
\label{subsec_proof_typeI}

In the rest of this section, using Lemma~\ref{key_lem1} and \ref{key_lem2}, we will show the finiteness of global dimension of $\End_R(M_\calL)$ 
for each Gorenstein Hibi ring $R$ with $\Cl(R)\cong\ZZ^2$ given in Theorem~\ref{main_thm_HibiZ2}. 
We will give the proof for the type (I) in this subsection and we only mention an outline for other cases in the next subsection. 
Thus, we first consider the Gorenstein Hibi ring $R=\kk[P]$ associated with the poset $\widehat{P}$ shown in Figure~\ref{poset_typeI}. 
In particular, we fix grayed edges as a spanning tree of $\widehat{P}$.  

\begin{figure}[H]
\begin{center}
\newcommand{\edgewidth}{0.06cm} 
\newcommand{\nodewidth}{0.06cm} 
\newcommand{\noderad}{0.2} 
\newcommand{\pmwidth}{0.65cm} 
\newcommand{\pmcolor}{lightgray} 

\begin{tikzpicture}
\node at (0,0)
{\scalebox{0.3}{
\begin{tikzpicture}
\coordinate (Min) at (4,0); \coordinate (Max) at (4,15);
\coordinate (N11) at (0,2); \coordinate (N12) at (0,7); \coordinate (N13) at (0,8); \coordinate (N14) at (0,13); 
\coordinate (N21) at (7,2); \coordinate (N22) at (7,7); 
\coordinate (N23a) at (6,8);  \coordinate (N24a) at (6,13); \coordinate (N23b) at (8,8); \coordinate (N24b) at (8,13); 

\draw[line width=\pmwidth, \pmcolor]  (N11)--(N14)--(Max) ; 
\draw[line width=\pmwidth, \pmcolor]  (Min)--(N21)--(N22)--(N23a)--(N24a)--(Max) ; 
\draw[line width=\pmwidth, \pmcolor]  (N22)--(N23b)--(N24b) ; 

\draw[line width=\edgewidth]  (Min)--(N11); \draw[line width=\edgewidth]  (N11)--(0,2.8); 
\draw[line width=\edgewidth, loosely dotted]  (0,3.4)--(0,5.6); \draw[line width=\edgewidth]  (0,6.2)--(N12); 
\draw[line width=\edgewidth]  (N12)--(N13); \draw[line width=\edgewidth]  (N13)--(0,8.8); 
\draw[line width=\edgewidth, loosely dotted]  (0,9.4)--(0,11.6); \draw[line width=\edgewidth]  (0,12.2)--(N14); \draw[line width=\edgewidth]  (N14)--(Max); 

\draw[line width=\edgewidth]  (Min)--(N21); \draw[line width=\edgewidth]  (N21)--(7,2.8); 
\draw[line width=\edgewidth, loosely dotted]  (7,3.4)--(7,5.6); \draw[line width=\edgewidth]  (7,6.2)--(N22); 

\draw[line width=\edgewidth]  (N22)--(N23a); \draw[line width=\edgewidth]  (N22)--(N23b); 
\draw[line width=\edgewidth]  (N23a)--(6,8.8); \draw[line width=\edgewidth]  (N23b)--(8,8.8); 
\draw[line width=\edgewidth, loosely dotted]  (6,9.4)--(6,11.6); \draw[line width=\edgewidth, loosely dotted]  (8,9.4)--(8,11.6); 
\draw[line width=\edgewidth]  (6,12.2)--(N24a); \draw[line width=\edgewidth]  (N24a)--(Max); 
\draw[line width=\edgewidth]  (8,12.2)--(N24b); \draw[line width=\edgewidth]  (N24b)--(Max); 

\draw [line width=\nodewidth, fill=gray] (Min) circle [radius=\noderad] ; \draw [line width=\nodewidth, fill=gray] (Max) circle [radius=\noderad] ; 
\draw [line width=\nodewidth, fill=white] (N11) circle [radius=\noderad] ; \draw [line width=\nodewidth, fill=white] (N12) circle [radius=\noderad] ; 
\draw [line width=\nodewidth, fill=white] (N13) circle [radius=\noderad] ; \draw [line width=\nodewidth, fill=white] (N14) circle [radius=\noderad] ; 
\draw [line width=\nodewidth, fill=white] (N21) circle [radius=\noderad] ; \draw [line width=\nodewidth, fill=white] (N22) circle [radius=\noderad] ; 
\draw [line width=\nodewidth, fill=white] (N23a) circle [radius=\noderad] ; \draw [line width=\nodewidth, fill=white] (N24a) circle [radius=\noderad] ; 
\draw [line width=\nodewidth, fill=white] (N23b) circle [radius=\noderad] ; \draw [line width=\nodewidth, fill=white] (N24b) circle [radius=\noderad] ; 

\draw [line width=0.03cm, decorate, decoration={brace,amplitude=10pt}](-0.8,2) -- (-0.8,7) node[black,midway,xshift=-0.8cm,yshift=0cm] {\Huge $m$} ; 
\draw [line width=0.03cm, decorate, decoration={brace,amplitude=10pt}](-0.8,7) -- (-0.8,13) node[black,midway,xshift=-0.8cm,yshift=0cm] {\Huge $n$} ; 
\draw [line width=0.03cm, decorate, decoration={brace, mirror, amplitude=10pt}](7.8,2) -- (7.8,7) node[black,midway,xshift=0.8cm,yshift=0cm] {\Huge $m$} ; 
\draw [line width=0.03cm, decorate, decoration={brace, mirror, amplitude=10pt}](8.8,7) -- (8.8,13) node[black,midway,xshift=0.8cm,yshift=0cm] {\Huge $n$} ; 
\node at (2,0.5) {\Huge$a$} ; \node at (6.2,14.5) {\Huge$b$} ; 

\end{tikzpicture}
} }; 

\node at (6.5,0) {
\begin{tabular}{c||c}
weight & the number of weights \\ \hline
$(1,0)$& $m+n+2$\\
$(0,1)$& $n+1$ \\
$(\mathchar`-1,0)$& $m+1$ \\
$(\mathchar`-1,\mathchar`-1)$& $n+1$
\end{tabular}
};
\end{tikzpicture}
\end{center}
\caption{The type (I) poset in Figure~\ref{figure_poset_Z2}, and the list of weights. Here, $m, n$ are the number of edges, where $m\ge0, n\ge 1$.}
\label{poset_typeI}
\end{figure}

Then, the prime divisors corresponding to edges not contained in this spanning tree, which is denoted by $a,b$, generate $\Cl(R)$. 
We denote such divisors by $\calD_a,\calD_b$ respectively, and denote the corresponding weight by $\beta_a=(1,0)$ and $\beta_b=(0,1)$. 
Using relations (\ref{relation_div_hibi}), we have the list of weights corresponding to prime divisors as shown in Figure~\ref{poset_typeI}. 
Then, we can prove Theorem~\ref{main_thm_HibiZ2}. 

\begin{proof}[The proof of Theorem~\ref{main_thm_HibiZ2} for the type (I)]
First, we show that $\gldim \End_R(\bigoplus_{\chi\in\calL}M_\chi)<\infty$. 
By Observation~\ref{obs_GSmod}, we may only consider the global dimension of $\Lambda_\calL$. 
To show this, we may show that $\pdim_{\Lambda_\calL}P_{\calL,\chi}<\infty$ for all $\chi\in\rmX(G)$ by Lemma~\ref{key_lem1}. 
We recall that if $\chi\in\calL$, then $P_{\calL,\chi}$ is a projective $\Lambda_\calL$-module. Let 
$$
\calM_j=\{\chi=(c_1,c_2)\in\rmX(G) \mid 0\le c_1\le m+n+1,\, c_2=-j\} 
$$
and take $\lambda=(0,1)\in\rmY(G)$. For $j\in\ZZ_{>0}$, any character $\chi\in\calM_j$ is separated from $\calL$ by $\lambda$, 
and the weight $\beta_i$ of characters satisfying $\langle\lambda,\beta_i\rangle>0$ are only $(0,1)$ with multiplicity $n+1$. 

When $j=1$, we see that $\pdim_{\Lambda_\calL}P_{\calL,\chi}<\infty$ for any $\chi\in\calM_1$ by using Lemma~\ref{key_lem2}. 
In fact, for $\chi\in\calM_1$, we have the acyclic complex $C_{\calL,\lambda,\chi}$ as in Lemma~\ref{key_lem2}, 
and we see that each component of the $-p$-th term $(p=1, \cdots, d_\lambda)$ is projective. Therefore, $C_{\calL,\lambda,\chi}$ is just a projective resolution of $P_{\calL,\chi}$. 

We then assume that $\pdim_{\Lambda_\calL}P_{\calL,\chi}<\infty$ for any $\chi\in\calM_k$ with $k=1, \cdots, j-1$. 
For any $\chi\in\calM_j$, we again have the acyclic complex $C_{\calL,\lambda,\chi}$, and see that each component of the $-p$-th term $(p=1, \cdots, d_\lambda)$ has the finite projective dimension by the assumption. 
Therefore, we have that $\pdim_{\Lambda_\calL}P_{\calL,\chi}<\infty$ for any $\chi\in\calM_j$ with $j\in\ZZ_{> 0}$. 

Then, we let 
$$
\calN_j=\{\chi=(c_1,c_2)\in\rmX(G) \mid c_1= -j,\, c_2\le n\}. 
$$
For $j\in\ZZ_{> 0}$, any character $\chi\in\calN_j$ is separated from $\calL$ by $\lambda=(1,0)\in\rmY(G)$,  
and the weight $\beta_i$ of characters satisfying $\langle\lambda,\beta_i\rangle>0$ are only $(1,0)$ with multiplicity $m+n+2$. 
By the same inductive arguments used in the case of $\calM_j$, we see that $\pdim_{\Lambda_\calL}P_{\calL,\chi}<\infty$ for any $\chi\in\calN_j$ with $j\in\ZZ_{> 0}$. 

Since the conic region $\calC(P)$ (see Theorem~\ref{conic_Hibi} or the proof of Proposition~\ref{prop_MCM_typeI} for the precise description of $\calC(P)$)  is contained in $\calL\cup\bigcup_{j\in\ZZ_{> 0}}\calM_j\cup\bigcup_{j\in\ZZ_{> 0}}\calN_j$, 
we especially have that $\pdim_{\Lambda_\calL}P_{\calL,\chi}<\infty$ for any $\chi\in\calC(P)$. 
This is enough to show $\pdim_{\Lambda_\calL}P_{\calL,\chi}<\infty$ for any $\chi\in\rmX(G)$ by the argument in \cite[Subsection~10.3]{SpVdB}. 
Thus, we have that $\gldim\Lambda_\calL<\infty$ and hence $\gldim\End_R(\bigoplus_{\chi\in\calL}M_\chi)<\infty$. 

Since $\Hom_R(M_\chi,M_{\chi^\prime})\cong M_{\chi^\prime-\chi}$, we see that $\End_R(\bigoplus_{\chi\in\calL}M_\chi)$ is MCM by the complete list of rank one MCM modules which will be given in Proposition~\ref{prop_MCM_typeI}. 
Thus, we have that $\End_R(\bigoplus_{\chi\in\calL}M_\chi)$ is an NCCR of $R$. 
\end{proof}

\subsection{Splitting NCCRs for other types}
\label{subsec_proof_othertype}

We then prove Theorem~\ref{main_thm_HibiZ2} for Gorenstein Hibi rings associated with posets of the type (II)--(V). 
The proof can be done by combining the same arguments as in Subsection~\ref{subsec_proof_typeI} and Proposition~\ref{prop_MCM_typeII}--\ref{prop_MCM_typeV}. Thus, we only mention an outline here. 

Let $R=\kk[P]$ be the Gorenstein Hibi ring associated with one of the posets $\widehat{P}$ shown in Figure~\ref{poset_typeII}--\ref{poset_typeV} below. 
For these figures, we fix grayed edges as a spanning tree of $\widehat{P}$. 
In this situation, the prime divisors $\calD_a,\calD_b$ corresponding to edges $a,b$, which are not contained in a fixed spanning tree, generate $\Cl(R)$. 
We denote the weights corresponding to $\calD_a,\calD_b$ by $\beta_a=(1,0), \beta_b=(0,1)$ respectively. 
Then, by using the relations (\ref{relation_div_hibi}), we have the list of weights corresponding to prime divisors as given in each figure below. 
We need only these data for applying the arguments in Subsection~\ref{subsec_proof_typeI}, thus we can prove our assertions. 

\begin{figure}[H]
\begin{center}
\newcommand{\edgewidth}{0.06cm} 
\newcommand{\nodewidth}{0.06cm} 
\newcommand{\noderad}{0.2} 
\newcommand{\pmwidth}{0.65cm} 
\newcommand{\pmcolor}{lightgray} 

\begin{tikzpicture}
\node at (0,0)
{\scalebox{0.3}{
\begin{tikzpicture}

\coordinate (Min) at (4,0); \coordinate (Max) at (4,15);
\coordinate (N11) at (0,2); \coordinate (N12) at (0,5); \coordinate (N13) at (0,6); \coordinate (N14) at (0,9); \coordinate (N15) at (0,10); \coordinate (N16) at (0,13); 
\coordinate (N21) at (8,2); \coordinate (N22) at (8,5); \coordinate (N23) at (8,6); \coordinate (N24) at (8,9); \coordinate (N25) at (8,10); \coordinate (N26) at (8,13); 
\coordinate (N31) at (2,8.75); \coordinate (N32) at (6,6.25);

\draw[line width=\pmwidth, \pmcolor]  (N11)--(N16)--(Max) ; \draw[line width=\pmwidth, \pmcolor]  (Min)--(N21)--(N26) ; 
\draw[line width=\pmwidth, \pmcolor]  (N15)--(N22) ; 

\coordinate (Min) at (4,0); \coordinate (Max) at (4,15);
\coordinate (N11) at (0,2); \coordinate (N12) at (0,5); \coordinate (N13) at (0,6); \coordinate (N14) at (0,9); \coordinate (N15) at (0,10); \coordinate (N16) at (0,13); 
\coordinate (N21) at (8,2); \coordinate (N22) at (8,5); \coordinate (N23) at (8,6); \coordinate (N24) at (8,9); \coordinate (N25) at (8,10); \coordinate (N26) at (8,13); 

\coordinate (N31) at (2,8.75); \coordinate (N32) at (6,6.25);

\draw[line width=\edgewidth]  (Min)--(N11); \draw[line width=\edgewidth]  (N11)--(0,2.8); \draw[line width=\edgewidth, loosely dotted]  (0,3)--(0,4); 
\draw[line width=\edgewidth]  (0,4.2)--(N12); \draw[line width=\edgewidth]  (N12)--(N13); \draw[line width=\edgewidth]  (N13)--(0,6.8); 
\draw[line width=\edgewidth, loosely dotted]  (0,7)--(0,8); \draw[line width=\edgewidth]  (0,8.2)--(N14); \draw[line width=\edgewidth]  (N14)--(N15); 
\draw[line width=\edgewidth]  (N15)--(0,10.8); \draw[line width=\edgewidth, loosely dotted]  (0,11)--(0,12); 
\draw[line width=\edgewidth]  (0,12.2)--(N16); \draw[line width=\edgewidth]  (N16)--(Max); 

\draw[line width=\edgewidth]  (Min)--(N21); \draw[line width=\edgewidth]  (N21)--(8,2.8); \draw[line width=\edgewidth, loosely dotted]  (8,3)--(8,4); 
\draw[line width=\edgewidth]  (8,4.2)--(N22); \draw[line width=\edgewidth]  (N22)--(N23); \draw[line width=\edgewidth]  (N23)--(8,6.8); 
\draw[line width=\edgewidth, loosely dotted]  (8,7)--(8,8); \draw[line width=\edgewidth]  (8,8.2)--(N24); \draw[line width=\edgewidth]  (N24)--(N25); 
\draw[line width=\edgewidth]  (N25)--(8,10.8); \draw[line width=\edgewidth, loosely dotted]  (8,11)--(8,12); 
\draw[line width=\edgewidth]  (8,12.2)--(N26); \draw[line width=\edgewidth]  (N26)--(Max); 

\draw[line width=\edgewidth]  (N15)--(N31); \draw[line width=\edgewidth]  (N31)--(2.8,8.25); 
\draw[line width=\edgewidth, loosely dotted]  (3.2,8)--(4.8,7); 
\draw[line width=\edgewidth]  (5.2,6.75)--(N32); \draw[line width=\edgewidth]  (N32)--(N22); 

\draw [line width=\nodewidth, fill=gray] (Min) circle [radius=\noderad] ; \draw [line width=\nodewidth, fill=gray] (Max) circle [radius=\noderad] ; 
\draw [line width=\nodewidth, fill=white] (N11) circle [radius=\noderad] ; \draw [line width=\nodewidth, fill=white] (N12) circle [radius=\noderad] ; \draw [line width=\nodewidth, fill=white] (N13) circle [radius=\noderad] ; 
\draw [line width=\nodewidth, fill=white] (N14) circle [radius=\noderad] ; \draw [line width=\nodewidth, fill=white] (N15) circle [radius=\noderad] ; \draw [line width=\nodewidth, fill=white] (N16) circle [radius=\noderad] ; 
\draw [line width=\nodewidth, fill=white] (N21) circle [radius=\noderad] ; \draw [line width=\nodewidth, fill=white] (N22) circle [radius=\noderad] ; \draw [line width=\nodewidth, fill=white] (N23) circle [radius=\noderad] ; 
\draw [line width=\nodewidth, fill=white] (N24) circle [radius=\noderad] ; \draw [line width=\nodewidth, fill=white] (N25) circle [radius=\noderad] ; \draw [line width=\nodewidth, fill=white] (N26) circle [radius=\noderad] ; 
\draw [line width=\nodewidth, fill=white] (N31) circle [radius=\noderad] ; \draw [line width=\nodewidth, fill=white] (N32) circle [radius=\noderad] ; 

\draw [line width=0.03cm, decorate, decoration={brace,amplitude=10pt}](-0.8,2) -- (-0.8,5) node[black,midway,xshift=-0.8cm,yshift=0cm] {\Huge $\ell$} ; 
\draw [line width=0.03cm, decorate, decoration={brace,amplitude=10pt}](-0.8,5) -- (-0.8,10) node[black,midway,xshift=-0.8cm,yshift=0cm] {\Huge $m$} ; 
\draw [line width=0.03cm, decorate, decoration={brace,amplitude=10pt}](-0.8,10) -- (-0.8,13) node[black,midway,xshift=-0.8cm,yshift=0cm] {\Huge $n$} ; 
\draw [line width=0.03cm, decorate, decoration={brace, mirror, amplitude=10pt}](8.8,2) -- (8.8,5) node[black,midway,xshift=0.8cm,yshift=0cm] {\Huge $\ell$} ; 
\draw [line width=0.03cm, decorate, decoration={brace, mirror, amplitude=10pt}](8.8,5) -- (8.8,10) node[black,midway,xshift=0.8cm,yshift=0cm] {\Huge $m$} ; 
\draw [line width=0.03cm, decorate, decoration={brace, mirror, amplitude=10pt}](8.8,10) -- (8.8,13) node[black,midway,xshift=0.8cm,yshift=0cm] {\Huge $n$} ; 
\draw [line width=0.03cm, decorate, decoration={brace,amplitude=10pt}](0.3,10.2) -- (7.7,5.575) node[black,midway,xshift=0.25cm,yshift=0.7cm] {\Huge $m$} ; 
\node at (2,0.5) {\Huge$a$} ; \node at (6,14.5) {\Huge$b$} ; 
\end{tikzpicture}
} }; 

\node at (6.5,0) {
\begin{tabular}{c||c}
weight & the number of weights \\ \hline
$(1,0)$& $\ell+m+1$\\
$(0,1)$& $m+n+1$ \\
$(\mathchar`-1,0)$& $\ell+1$ \\
$(0,\mathchar`-1)$& $n+1$ \\
$(\mathchar`-1,\mathchar`-1)$& $m$ 
\end{tabular}
};
\end{tikzpicture}
\end{center}
\caption{The type (II) poset in Figure~\ref{figure_poset_Z2}, and the list of weights. Here, $\ell, m, n$ are the number of edges, where $\ell\ge 0, m\ge1, n\ge 0$.}
\label{poset_typeII}
\end{figure}

\begin{figure}[H]
\begin{center}
\newcommand{\edgewidth}{0.06cm} 
\newcommand{\nodewidth}{0.06cm} 
\newcommand{\noderad}{0.2} 
\newcommand{\pmwidth}{0.65cm} 
\newcommand{\pmcolor}{lightgray} 

\begin{tikzpicture}
\node at (0,0)
{\scalebox{0.3}{
\begin{tikzpicture}
\coordinate (Min) at (4,0); \coordinate (Max) at (4,15);
\coordinate (N11) at (0,2); \coordinate (N12) at (0,5); \coordinate (N13) at (0,6); 
\coordinate (N14) at (0,9); \coordinate (N15) at (0,10); \coordinate (N16) at (0,13); 

\coordinate (N21) at (7,2); \coordinate (N22) at (7,5); 
\coordinate (N23a) at (6,6);  \coordinate (N24a) at (6,9); \coordinate (N23b) at (8,6); \coordinate (N24b) at (8,9); 
\coordinate (N25) at (7,10); \coordinate (N26) at (7,13); 

\draw[line width=\pmwidth, \pmcolor]  (N11)--(N16)--(Max) ; 
\draw[line width=\pmwidth, \pmcolor]  (Min)--(N21)--(N22)--(N23a)--(N24a)--(N25)--(N26)--(Max) ; 
\draw[line width=\pmwidth, \pmcolor]  (N22)--(N23b)--(N24b) ; 

\draw[line width=\edgewidth]  (Min)--(N11); \draw[line width=\edgewidth]  (N11)--(0,2.8); 
\draw[line width=\edgewidth, loosely dotted]  (0,3)--(0,4); 
\draw[line width=\edgewidth]  (0,4.2)--(N12); \draw[line width=\edgewidth]  (N12)--(N13); \draw[line width=\edgewidth]  (N13)--(0,6.8); 
\draw[line width=\edgewidth, loosely dotted]  (0,7)--(0,8); \draw[line width=\edgewidth]  (0,8.2)--(N14); 
\draw[line width=\edgewidth]  (N14)--(N15); \draw[line width=\edgewidth]  (N15)--(0,10.8); 
\draw[line width=\edgewidth, loosely dotted]  (0,11)--(0,12); \draw[line width=\edgewidth]  (0,12.2)--(N16); \draw[line width=\edgewidth]  (N16)--(Max); 

\draw[line width=\edgewidth]  (Min)--(N21); \draw[line width=\edgewidth]  (N21)--(7,2.8); 
\draw[line width=\edgewidth, loosely dotted]  (7,3)--(7,4); 
\draw[line width=\edgewidth]  (7,4.2)--(N22); \draw[line width=\edgewidth]  (N22)--(N23a); \draw[line width=\edgewidth]  (N22)--(N23b); 
\draw[line width=\edgewidth]  (N23a)--(6,6.8); \draw[line width=\edgewidth]  (N23b)--(8,6.8); 
\draw[line width=\edgewidth, loosely dotted]  (6,7)--(6,8); \draw[line width=\edgewidth, loosely dotted]  (8,7)--(8,8); 
\draw[line width=\edgewidth]  (6,8.2)--(N24a); \draw[line width=\edgewidth]  (8,8.2)--(N24b); 
\draw[line width=\edgewidth]  (N24a)--(N25); \draw[line width=\edgewidth]  (N24b)--(N25); \draw[line width=\edgewidth]  (N25)--(7,10.8); 
\draw[line width=\edgewidth, loosely dotted]  (7,11)--(7,12); \draw[line width=\edgewidth]  (7,12.2)--(N26); \draw[line width=\edgewidth]  (N26)--(Max); 

\draw [line width=\nodewidth, fill=gray] (Min) circle [radius=\noderad] ; \draw [line width=\nodewidth, fill=gray] (Max) circle [radius=\noderad] ; 
\draw [line width=\nodewidth, fill=white] (N11) circle [radius=\noderad] ; \draw [line width=\nodewidth, fill=white] (N12) circle [radius=\noderad] ; 
\draw [line width=\nodewidth, fill=white] (N13) circle [radius=\noderad] ; \draw [line width=\nodewidth, fill=white] (N14) circle [radius=\noderad] ; 
\draw [line width=\nodewidth, fill=white] (N15) circle [radius=\noderad] ; \draw [line width=\nodewidth, fill=white] (N16) circle [radius=\noderad] ; 
\draw [line width=\nodewidth, fill=white] (N21) circle [radius=\noderad] ; \draw [line width=\nodewidth, fill=white] (N22) circle [radius=\noderad] ; 
\draw [line width=\nodewidth, fill=white] (N23a) circle [radius=\noderad] ; \draw [line width=\nodewidth, fill=white] (N24a) circle [radius=\noderad] ; 
\draw [line width=\nodewidth, fill=white] (N23b) circle [radius=\noderad] ; \draw [line width=\nodewidth, fill=white] (N24b) circle [radius=\noderad] ; 
\draw [line width=\nodewidth, fill=white] (N25) circle [radius=\noderad] ; \draw [line width=\nodewidth, fill=white] (N26) circle [radius=\noderad] ; 

\draw [line width=0.03cm, decorate, decoration={brace,amplitude=10pt}](-0.8,2) -- (-0.8,13) node[black,midway,xshift=-2.3cm,yshift=0cm] {\Huge $\ell+m+n$} ; 
\draw [line width=0.03cm, decorate, decoration={brace, mirror, amplitude=10pt}](7.8,2) -- (7.8,5) node[black,midway,xshift=0.8cm,yshift=0cm] {\Huge $\ell$} ; 
\draw [line width=0.03cm, decorate, decoration={brace, mirror, amplitude=10pt}](8.8,5) -- (8.8,10) node[black,midway,xshift=0.8cm,yshift=0cm] {\Huge $m$} ; 
\draw [line width=0.03cm, decorate, decoration={brace, mirror, amplitude=10pt}](7.8,10) -- (7.8,13) node[black,midway,xshift=0.8cm,yshift=0cm] {\Huge $n$} ; 
\node at (2,0.5) {\Huge$a$} ; \node at (7.3,9.2) {\Huge$b$} ; 
\end{tikzpicture}
} }; 

\node at (6.7,0) {
\begin{tabular}{c||c}
weight & the number of weights \\ \hline
$(1,0)$& $\ell+m+n+2$ \\
$(0,1)$& $m$ \\
$(\mathchar`-1,0)$& $\ell+n+2$ \\
$(\mathchar`-1,\mathchar`-1)$& $m$ 
\end{tabular}
};
\end{tikzpicture}
\end{center}
\caption{The type (III) poset in Figure~\ref{figure_poset_Z2}, and the list of weights. Here, $\ell, m, n$ are the number of edges, where $\ell\ge 0, m\ge2, n\ge 0$.}
\label{poset_typeIII}
\end{figure}

\begin{figure}[H]
\begin{center}
\newcommand{\edgewidth}{0.06cm} 
\newcommand{\nodewidth}{0.06cm} 
\newcommand{\noderad}{0.2} 
\newcommand{\pmwidth}{0.65cm} 
\newcommand{\pmcolor}{lightgray} 

\begin{tikzpicture}
\node at (0,0)
{\scalebox{0.3}{
\begin{tikzpicture}
\coordinate (Min) at (4,0); \coordinate (Max) at (4,14);
\coordinate (N11) at (0,2); \coordinate (N12) at (0,5); \coordinate (N13) at (0,9); \coordinate (N14) at (0,12); 
\coordinate (N21) at (8,2); \coordinate (N22) at (8,5); \coordinate (N23) at (8,9); \coordinate (N24) at (8,12); 
\coordinate (N3) at (4,7);

\draw[line width=\pmwidth, \pmcolor]  (N11)--(N12)--(N3)--(N13)--(N14)--(Max) ; 
\draw[line width=\pmwidth, \pmcolor]  (Min)--(N21)--(N22)--(N3)--(N23)--(N24) ; 

\draw[line width=\edgewidth]  (Min)--(N11); \draw[line width=\edgewidth]  (N11)--(0,2.8); \draw[line width=\edgewidth, loosely dotted]  (0,3)--(0,4); 
\draw[line width=\edgewidth]  (0,4.2)--(N12); \draw[line width=\edgewidth]  (N12)--(N3); \draw[line width=\edgewidth]  (N3)--(N13); 
\draw[line width=\edgewidth]  (N13)--(0,9.8); \draw[line width=\edgewidth, loosely dotted]  (0,10)--(0,11); 
\draw[line width=\edgewidth]  (0,11.2)--(N14); \draw[line width=\edgewidth]  (N14)--(Max);
\draw[line width=\edgewidth]  (Min)--(N21); \draw[line width=\edgewidth]  (N21)--(8,2.8); \draw[line width=\edgewidth, loosely dotted]  (8,3)--(8,4); 
\draw[line width=\edgewidth]  (8,4.2)--(N22); \draw[line width=\edgewidth]  (N22)--(N3); \draw[line width=\edgewidth]  (N3)--(N23); 
\draw[line width=\edgewidth]  (N23)--(8,9.8); \draw[line width=\edgewidth, loosely dotted]  (8,10)--(8,11); 
\draw[line width=\edgewidth]  (8,11.2)--(N24); \draw[line width=\edgewidth]  (N24)--(Max);

\draw [line width=\nodewidth, fill=gray] (Min) circle [radius=\noderad] ; \draw [line width=\nodewidth, fill=gray] (Max) circle [radius=\noderad] ; 
\draw [line width=\nodewidth, fill=white] (N11) circle [radius=\noderad] ; \draw [line width=\nodewidth, fill=white] (N12) circle [radius=\noderad] ; 
\draw [line width=\nodewidth, fill=white] (N13) circle [radius=\noderad] ; \draw [line width=\nodewidth, fill=white] (N14) circle [radius=\noderad] ; 
\draw [line width=\nodewidth, fill=white] (N21) circle [radius=\noderad] ; \draw [line width=\nodewidth, fill=white] (N22) circle [radius=\noderad] ; 
\draw [line width=\nodewidth, fill=white] (N23) circle [radius=\noderad] ; \draw [line width=\nodewidth, fill=white] (N24) circle [radius=\noderad] ; 
\draw [line width=\nodewidth, fill=white] (N3) circle [radius=\noderad] ; 

\draw [line width=0.03cm, decorate, decoration={brace,amplitude=10pt}](-0.8,2) -- (-0.8,7) node[black,midway,xshift=-0.8cm,yshift=0cm] {\Huge $m$} ; 
\draw [line width=0.03cm, decorate, decoration={brace,amplitude=10pt}](-0.8,7) -- (-0.8,12) node[black,midway,xshift=-0.8cm,yshift=0cm] {\Huge $n$} ; 
\node at (2,0.5) {\Huge$a$} ; \node at (6,13.5) {\Huge$b$} ; 

\end{tikzpicture}
} }; 

\node at (6.5,0) {
\begin{tabular}{c||c}
weight & the number of weights \\ \hline
$(1,0)$& $m+1$ \\
$(0,1)$&  $n+1$ \\
$(\mathchar`-1,0)$& $m+1$ \\
$(0,\mathchar`-1)$& $n+1$ 
\end{tabular}
};
\end{tikzpicture}
\end{center}
\caption{The type (IV) poset in Figure~\ref{figure_poset_Z2}, and the list of weights. Here, $m, n$ are the number of edges, where $m\ge1, n\ge 1$.}
\label{poset_typeIV}
\end{figure}

\begin{figure}[H]
\begin{center}
\newcommand{\edgewidth}{0.06cm} 
\newcommand{\nodewidth}{0.06cm} 
\newcommand{\noderad}{0.2} 
\newcommand{\pmwidth}{0.65cm} 
\newcommand{\pmcolor}{lightgray} 

\begin{tikzpicture}
\node at (0,0)
{\scalebox{0.3}{
\begin{tikzpicture}

\coordinate (Min) at (4,0); \coordinate (Max) at (4,14);
\coordinate (N11) at (0,2); \coordinate (N12) at (0,3); \coordinate (N13) at (0,11); \coordinate (N14) at (0,12); 
\coordinate (N21) at (4,2); \coordinate (N22) at (4,3); \coordinate (N23) at (4,11); \coordinate (N24) at (4,12); 
\coordinate (N31) at (8,2); \coordinate (N32) at (8,3); \coordinate (N33) at (8,11); \coordinate (N34) at (8,12); 

\draw[line width=\pmwidth, \pmcolor]  (N11)--(N12)--(N13)--(N14)--(Max) ; 
\draw[line width=\pmwidth, \pmcolor]  (N21)--(N22)--(N23)--(N24)--(Max) ; 
\draw[line width=\pmwidth, \pmcolor]  (Min)--(N31)--(N32)--(N33)--(N34)--(Max) ; 

\draw[line width=\edgewidth]  (Min)--(N11); \draw[line width=\edgewidth]  (N11)--(N12); \draw[line width=\edgewidth]  (N12)--(0,3.8); 
\draw[line width=\edgewidth, loosely dotted]  (0,5)--(0,9); 
\draw[line width=\edgewidth]  (0,10.2)--(N13); \draw[line width=\edgewidth]  (N13)--(N14); \draw[line width=\edgewidth]  (N14)--(Max); 
\draw[line width=\edgewidth]  (Min)--(N21); \draw[line width=\edgewidth]  (N21)--(N22); \draw[line width=\edgewidth]  (N22)--(4,3.8); 
\draw[line width=\edgewidth, loosely dotted]  (4,5)--(4,9); 
\draw[line width=\edgewidth]  (4,10.2)--(N23); \draw[line width=\edgewidth]  (N23)--(N24); \draw[line width=\edgewidth]  (N24)--(Max); 
\draw[line width=\edgewidth]  (Min)--(N31); \draw[line width=\edgewidth]  (N31)--(N32); \draw[line width=\edgewidth]  (N32)--(8,3.8); 
\draw[line width=\edgewidth, loosely dotted]  (8,5)--(8,9); 
\draw[line width=\edgewidth]  (8,10.2)--(N33); \draw[line width=\edgewidth]  (N33)--(N34); \draw[line width=\edgewidth]  (N34)--(Max); 

\draw [line width=\nodewidth, fill=gray] (Min) circle [radius=\noderad] ; \draw [line width=\nodewidth, fill=gray] (Max) circle [radius=\noderad] ; 
\draw [line width=\nodewidth, fill=white] (N11) circle [radius=\noderad] ; \draw [line width=\nodewidth, fill=white] (N12) circle [radius=\noderad] ; 
\draw [line width=\nodewidth, fill=white] (N13) circle [radius=\noderad] ; \draw [line width=\nodewidth, fill=white] (N14) circle [radius=\noderad] ; 
\draw [line width=\nodewidth, fill=white] (N21) circle [radius=\noderad] ; \draw [line width=\nodewidth, fill=white] (N22) circle [radius=\noderad] ; 
\draw [line width=\nodewidth, fill=white] (N23) circle [radius=\noderad] ; \draw [line width=\nodewidth, fill=white] (N24) circle [radius=\noderad] ; 
\draw [line width=\nodewidth, fill=white] (N31) circle [radius=\noderad] ; \draw [line width=\nodewidth, fill=white] (N32) circle [radius=\noderad] ; 
\draw [line width=\nodewidth, fill=white] (N33) circle [radius=\noderad] ; \draw [line width=\nodewidth, fill=white] (N34) circle [radius=\noderad] ; 

\draw [line width=0.03cm, decorate, decoration={brace,amplitude=10pt}](-0.8,2) -- (-0.8,12) node[black,midway,xshift=-0.8cm,yshift=0cm] {\Huge $n$} ; 
\node at (2,0.5) {\Huge$a$} ; \node at (3.6,1.1) {\Huge$b$} ; 

\end{tikzpicture}
} }; 

\node at (6.5,0) {
\begin{tabular}{c||c}
weight & the number of weights \\ \hline
$(1,0)$& $n+2$ \\
$(0,1)$& $n+2$ \\
$(\mathchar`-1,\mathchar`-1)$& $n+2$ 
\end{tabular}
};
\end{tikzpicture}
\end{center}
\caption{The type (V) poset in Figure~\ref{figure_poset_Z2}, and the list of weights. Here, $n$ are the number of edges, where $n\ge 0$.}
\label{poset_typeV}
\end{figure}

\section{Rank one MCM modules for Hibi rings with class group $\ZZ^2$}
\label{sec_MCM_HibiZ2}

To complete the proof of Theorem~\ref{main_thm_HibiZ2}, in this section we give the explicit description of all rank one MCM modules for each type (I)--(V) shown in Figure~\ref{figure_poset_Z2}. 

\subsection{Van den Bergh's criterion of MCM modules} 

In order to check which rank one reflexive module is MCM, we will use the criterion given in \cite{VdB1}. 

\begin{notation}
First, we recall our settings. Let $R$ be a Gorenstein Hibi ring with $\Cl(R)\cong\ZZ^2$, which is obtained from one of the posets given in Figure~\ref{figure_poset_Z2}. 
For $G=\Hom(\Cl(R),\kk^\times)\cong(\kk^\times)^2$, $\beta_1,\cdots,\beta_n$ denote the weights of the characters corresponding to prime divisors on $\Spec R$, and we let $\calW=\{ 1,\cdots, n\}$. Then, $R\cong S^G$ under the action of $G$ defined via the weights $\beta_i$'s, where $S=\kk[x_1,\cdots,x_n]$. 
We note that it is known that $\Spec S$ contains a \emph{stable point} (i.e., a point in $\Spec S$ having the closed $G$-orbit and finite stabilizer) if and only if for any $0\neq\lambda\in\rmY(G)$ there exists a weight $\beta_i$ such that $\langle\lambda,\beta_i\rangle>0$. 
Therefore, we easily check that $\Spec S$ always contains a stable point in our situation. 

Let $\rmY(G)$ be the group of one-parameter subgroups of $G$, and let $\rmY(G)_\RR\coloneqq\rmY(G)\otimes_\ZZ\RR$. 
For $\lambda\in\rmY(G)_\RR$, we set $T_\lambda=\{i\in\calW \mid \langle\lambda,\beta_i\rangle<0\}$ and $T_\lambda^c=\{i\in\calW \mid \langle\lambda,\beta_i\rangle\ge0\}$. 
For $\lambda,\lambda^\prime\in\rmY(G)_\RR$, we denote $\lambda\sim\lambda^\prime$ if $T_\lambda=T_{\lambda^\prime}$. 
We also define 
$$
B(G)=\{\lambda\in\rmY(G)_\RR \mid \|\lambda\|<1\}, \quad B(G)_\lambda=\{\mu\in B(G)\mid \mu\sim\lambda\}, 
$$
and sometimes we simply denote these by $B, B_\lambda$ respectively. 
\end{notation}

For $0\neq\lambda\in B$, we see that $B_\lambda$ takes one of the following types: 
\begin{center}
\scalebox{0.8}{
\begin{tikzpicture}
\draw [line width=0.02cm] (-0.5,0.65)--(5,0.65) ; \draw [line width=0.02cm] (-0.5,-0.65)--(5,-0.65) ; 
\draw [line width=0.02cm] (-0.5,-1.95)--(5,-1.95) ; \draw [line width=0.02cm] (-0.5,-3.25)--(5,-3.25) ; 

\draw [line width=0.02cm] (-0.5,0.65)--(-0.5,-3.25) ; \draw [line width=0.02cm] (0.7,0.65)--(0.7,-3.25) ; 
\draw [line width=0.02cm] (5,0.65)--(5,-3.25) ; 

\node at (0,0) {$\Lambda_\emptyset$}; 
\node at (2,0) {\scalebox{0.5}{\begin{tikzpicture} \coordinate (A1) at (0:2.5cm); 
\draw [line width=0.05cm] (0,0)--(A1) ; \draw [line width=0.05cm, fill=white] (0,0) circle [radius=0.12cm] ; \draw [line width=0.05cm, fill=white] (A1) circle [radius=0.12cm] ;\end{tikzpicture} }}; 

\node at (4,0) {\scalebox{0.5}{\begin{tikzpicture} 
\coordinate (A2) at (-20:2.5cm); \coordinate (A3) at (20:2.5cm); 
\filldraw[fill=lightgray, draw=lightgray] (0,0)--(A2) arc [start angle=-20, delta angle=40, radius=2.5] ; 
\draw [line width=0.05cm] (0,0)--(A2) ; \draw [line width=0.05cm] (0,0)--(A3) ; 
\draw [line width=0.05cm, fill=white] (0,0) circle [radius=0.12cm] ; 
\draw [line width=0.05cm, dashed]  (A2) arc [start angle=-20, delta angle=40, radius=2.5] ;
\end{tikzpicture} }}; 

\node at (0,-1.3) {$\Lambda_\bullet$}; 
\node at (2,-1.3) 
{\scalebox{0.5}{\begin{tikzpicture} 
\filldraw[fill=lightgray, draw=lightgray] (0,0)--(A2) arc [start angle=-20, delta angle=40, radius=2.5] ; 
\draw [line width=0.05cm, dashed] (0,0)--(A2) ; \draw [line width=0.05cm] (0,0)--(A3) ; 
\draw [line width=0.05cm, fill=white] (0,0) circle [radius=0.12cm] ; 
\draw [line width=0.05cm, dashed]  (A2) arc [start angle=-20, delta angle=40, radius=2.5] ;
\end{tikzpicture} }}; 

\node at (4,-1.3) 
{\scalebox{0.5}{\begin{tikzpicture} 
\filldraw[fill=lightgray, draw=lightgray] (0,0)--(A2) arc [start angle=-20, delta angle=40, radius=2.5] ; 
\draw [line width=0.05cm] (0,0)--(A2) ; \draw [line width=0.05cm, dashed] (0,0)--(A3) ; 
\draw [line width=0.05cm, fill=white] (0,0) circle [radius=0.12cm] ; 
\draw [line width=0.05cm, dashed]  (A2) arc [start angle=-20, delta angle=40, radius=2.5] ;
\end{tikzpicture} }}; 

\node at (0,-2.6) {$\Lambda_{\bullet\bullet}$}; 
\node at (2,-2.6)
{\scalebox{0.5}{\begin{tikzpicture} 
\filldraw[fill=lightgray, draw=lightgray] (0,0)--(A2) arc [start angle=-20, delta angle=40, radius=2.5] ; 
\draw [line width=0.05cm, dashed] (0,0)--(A2) ; \draw [line width=0.05cm, dashed] (0,0)--(A3) ; 
\draw [line width=0.05cm, fill=white] (0,0) circle [radius=0.12cm] ; 
\draw [line width=0.05cm, dashed]  (A2) arc [start angle=-20, delta angle=40, radius=2.5] ;
\end{tikzpicture} }}; 
\end{tikzpicture}
}
\end{center}
Following \cite{VdB1}, we denote the set of representative elements $0\neq\bar{\lambda}\in(B/{\sim})$ corresponding to each type by $\Lambda_\emptyset$, $\Lambda_\bullet$, and $\Lambda_{\bullet\bullet}$ as in the above table (see \cite[Corollary~4.1.1]{VdB1} for more precise descriptions). 
Then, we have the following criterion. 

\begin{proposition}[{see \cite[Remark~4.1.2]{VdB1}}] 
\label{criterion_MCM}
Let $\calU^\chi\subset\ZZ^n$ be the set of all integral solutions $\bfa=(a_1,\cdots,a_n)$ to $\chi=\sum_{i=1}^na_i\beta_i\in\rmX(G)$, and let ${\rm supp}_-\bfa\coloneqq\{i \mid a_i<0\}$ for $\bfa\in\calU^\chi$. 
We suppose that $|T_\lambda|>1$ for all $\bar{\lambda}\in\Lambda_\emptyset$ and $|T_\lambda|>2$ for all $\bar{\lambda}\in\Lambda_{\bullet\bullet}$. 
Then, $M_\chi$ is MCM if and only if for any $\bar{\lambda}\in\Lambda_\emptyset\cup\Lambda_{\bullet\bullet}$, there is no $\bfa\in\calU^\chi$ such that ${\rm supp}_-\bfa=T_\lambda^c$. 

\end{proposition}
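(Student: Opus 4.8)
The plan is to obtain the criterion by specializing Van den Bergh's general cohomological characterization of Cohen--Macaulay modules of covariants \cite{VdB1} to the rank two torus $G=(\kk^\times)^2$. Since $R=S^G$ is Gorenstein, hence CM, and $M_\chi$ is a rank one reflexive $R$-module (so $\dim_R M_\chi=d=\dim R$), the module $M_\chi$ is MCM if and only if $\depth_R M_\chi=d$, equivalently $H^i_\fkm(M_\chi)=0$ for all $i<d$, where $\fkm$ is the graded maximal ideal of $R$. Thus the task reduces entirely to detecting the vanishing of the lower local cohomology of $M_\chi$, and I would compute this cohomology combinatorially.

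First I would use the $\sfM$-grading on $M_\chi$ coming from the toric structure of $R$ to decompose each $H^i_\fkm(M_\chi)$ into graded pieces, and then follow \cite{VdB1} in relating these to the $G$-invariants of local cohomology of $S\otimes_\kk V_\chi$ supported along the various unstable loci of the $G$-action on $\Spec S=\AA^n$. Concretely, each graded piece is then computed from a \v{C}ech-type complex on the coordinates $x_1,\dots,x_n$, and its value is the reduced cohomology of a subspace (a link) determined by the degree through the sign pattern of the exponent vectors realizing it. The crucial structural fact is that the non-vanishing of these cohomologies is organized by one-parameter subgroups $\lambda\in\rmY(G)_\RR$: each destabilizing $\lambda$ contributes a potential class, and in the rank two case these contributions are indexed precisely by the chamber decomposition of the disk $B$ along the walls $\langle\lambda,\beta_i\rangle=0$, i.e. by the types $\Lambda_\emptyset$, $\Lambda_\bullet$, and $\Lambda_{\bullet\bullet}$.

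Next I would track how the standing hypotheses enter. The inequalities $|T_\lambda|>1$ for $\bar\lambda\in\Lambda_\emptyset$ and $|T_\lambda|>2$ for $\bar\lambda\in\Lambda_{\bullet\bullet}$ are the codimension conditions that force the cohomology attached to a surviving $\lambda$ into degrees $<d$, and simultaneously make the one-wall type $\Lambda_\bullet$ contribute nothing; this is exactly why only $\Lambda_\emptyset\cup\Lambda_{\bullet\bullet}$ appears in the conclusion. Under these hypotheses the piece of $H^i_\fkm(M_\chi)$ attached to a class $\bar\lambda\in\Lambda_\emptyset\cup\Lambda_{\bullet\bullet}$ is nonzero in some degree $i<d$ if and only if there is an integral vector $\bfa\in\calU^\chi$ whose entries are nonnegative exactly on $T_\lambda$ and negative exactly on $T_\lambda^c$, that is, with $\mathrm{supp}_-\bfa=T_\lambda^c$. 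Such an $\bfa$ is the monomial witnessing that $\lambda$ destabilizes $\chi$ (indeed $\langle\lambda,\chi\rangle=\sum_i a_i\langle\lambda,\beta_i\rangle\le0$ for such $\bfa$), and it produces the nonzero low-degree local cohomology class.

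Putting these together, $M_\chi$ fails to be MCM if and only if some $\bar\lambda\in\Lambda_\emptyset\cup\Lambda_{\bullet\bullet}$ admits such an $\bfa$, which is the contrapositive of the asserted equivalence. The step I expect to be the main obstacle is the bookkeeping hidden in the middle step: matching, for each type, the exact cohomological degree in which it can contribute against the threshold $d$, and verifying that the stated size conditions on $T_\lambda$ both suppress the $\Lambda_\bullet$ terms and push the surviving terms strictly below degree $d$. This is precisely the reduced-cohomology-of-a-link computation of \cite{VdB1} carried out in the rank two case, and it is the content packaged in \cite[Remark~4.1.2]{VdB1}.
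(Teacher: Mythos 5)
The paper offers no proof of this proposition at all: it is imported verbatim from Van den Bergh's \emph{Cohen-Macaulayness of semi-invariants for tori} (the citation to Remark~4.1.2 of \cite{VdB1} in the statement is the entire justification). So the relevant comparison is with the argument in that source, and your sketch does reconstruct its architecture correctly: reduce MCM-ness to vanishing of $H^i_\fkm(M_\chi)$ for $i<d$, compute the graded pieces of local cohomology of $S\otimes_\kk V_\chi$ along unstable loci via a \v{C}ech-type complex, observe that the potential contributions are indexed by the cells $B_\lambda$ of the chamber decomposition of the disk $B\subset\rmY(G)_\RR$, and detect non-vanishing by the existence of $\bfa\in\calU^\chi$ with the prescribed sign pattern ${\rm supp}_-\bfa=T_\lambda^c$. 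Your verification that such an $\bfa$ forces $\langle\lambda,\chi\rangle\le0$ is also correct.

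Two caveats. First, as you acknowledge, the decisive step --- identifying each graded contribution with the reduced cohomology of a link determined by the cell $B_\lambda$, and matching the cohomological degree of that contribution against the threshold $d$ --- is left as a black box, so what you have is a roadmap rather than a proof; since the paper itself treats exactly this as a black box, you are at the same level of rigor as the text. Second, you slightly misattribute the roles of the hypotheses: the fact that the half-open type $\Lambda_\bullet$ contributes nothing is \emph{not} a consequence of the size conditions on $T_\lambda$ --- it holds automatically, because the relevant pair for a half-open cell is contractible and its reduced cohomology vanishes identically. The conditions $|T_\lambda|>1$ for $\bar\lambda\in\Lambda_\emptyset$ and $|T_\lambda|>2$ for $\bar\lambda\in\Lambda_{\bullet\bullet}$ serve only to push the surviving contributions into cohomological degree strictly below $d$, so that their non-vanishing is exactly equivalent to failure of the MCM property. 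Keeping these two mechanisms separate is necessary if one actually wants to carry out the bookkeeping you defer.
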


\subsection{Rank one MCM modules for the type (I)}
\begin{proposition}
\label{prop_MCM_typeI}
Let $R$ be the Gorenstein Hibi ring with $\Cl(R)\cong\ZZ^2$ associated with the poset shown in Figure~\ref{poset_typeI}. 
Then, we see that $M_\chi$ is a rank one MCM module if and only if $\chi\in\rmX(G)\cong\ZZ^2$ is contained in the shaded area in Figure~\ref{MCM_typeI}. 
In particular, the grayed area represents conic classes. 
\end{proposition}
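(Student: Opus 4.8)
The plan is to apply Van den Bergh's criterion, Proposition~\ref{criterion_MCM}, to the weight data recorded in Figure~\ref{poset_typeI}: the weights $\beta_i\in\rmX(G)_\RR=\RR^2$ are $(1,0),(0,1),(-1,0),(-1,-1)$ with multiplicities $m+n+2,\,n+1,\,m+1,\,n+1$ respectively. Writing $\lambda=(s,t)\in\rmY(G)_\RR=\RR^2$, the walls $\langle\lambda,\beta_i\rangle=0$ are the three lines $s=0$, $t=0$, $s+t=0$. First I would enumerate the regions of constancy of $T_\lambda$ cut out by this line arrangement on $B(G)$, record for each region the set $T_\lambda$ (hence $T_\lambda^c$, which is automatically a union of full weight-classes since all indices in one class share a weight), and sort the representatives into the three types $\Lambda_\emptyset,\Lambda_\bullet,\Lambda_{\bullet\bullet}$ according to the local picture (a $1$-dimensional ray or a closed $2$-dimensional sector is $\Lambda_\emptyset$, a half-open sector is $\Lambda_\bullet$, an open sector is $\Lambda_{\bullet\bullet}$). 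Some care is needed because the convention $\langle\lambda,\beta_i\rangle\ge0$ versus $>0$ attaches each wall to exactly one of its adjacent chambers, so the arrangement is \emph{not} symmetric under $\lambda\mapsto-\lambda$; only the classes in $\Lambda_\emptyset\cup\Lambda_{\bullet\bullet}$ enter the criterion, and here they turn out to be the three sets $\{D\},\{B\},\{A\}$ (type $\Lambda_\emptyset$) and $\{A,D\},\{A,B\},\{B,C,D\}$ (type $\Lambda_{\bullet\bullet}$), where I abbreviate the four classes by $A,B,C,D$.

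Next I would verify the hypotheses of Proposition~\ref{criterion_MCM}, namely $|T_\lambda|>1$ on $\Lambda_\emptyset$ and $|T_\lambda|>2$ on $\Lambda_{\bullet\bullet}$. These hold because each such $T_\lambda$ contains a full weight-class, and the smallest one arising has size $n+1\ge 2$ (using $n\ge1$), while every $\Lambda_{\bullet\bullet}$-set has size $m+2n+3\ge5$. The criterion then states that $M_\chi$ is MCM exactly when, for each $\bar\lambda\in\Lambda_\emptyset\cup\Lambda_{\bullet\bullet}$, there is no $\bfa\in\calU^\chi$ with ${\rm supp}_-\bfa=T_\lambda^c$. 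To make this explicit I would collapse $\bfa$ to the four class-sums $\xi_1,\xi_2,\xi_3,\xi_4$ (the sums of the $a_i$ over the classes $(1,0),(0,1),(-1,0),(-1,-1)$), so that with $\chi=(c_1,c_2)$ one has $c_1=\xi_1-\xi_3-\xi_4$ and $c_2=\xi_2-\xi_4$. The condition ${\rm supp}_-\bfa=T_\lambda^c$ forces every class inside $T_\lambda^c$ to have class-sum $\le-(\text{its multiplicity})$ and every other class to have class-sum $\ge0$; realizability of a given sign pattern thus becomes a feasibility question for this $2\times2$ system over the prescribed rays, which I would resolve pattern by pattern.

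Carrying this out, the $\Lambda_\emptyset$-patterns $T_\lambda^c=\{A,B,C\}$ and $\{A,C,D\}$ give the clean strip $-n\le c_2\le n$, and the patterns $T_\lambda^c=\{B,C,D\}$ (from $\{A\}$) and $T_\lambda^c=\{A\}$ (from $\{B,C,D\}$) give two opposite wedge conditions, realizable precisely on $\{c_1\ge p,\ c_1-c_2\ge p\}$ and $\{c_1\le-p,\ c_1-c_2\le-p\}$ with $p=m+n+2$. I would then record the observation (worth isolating as a one-line remark) that the two remaining $\Lambda_{\bullet\bullet}$-patterns $T_\lambda^c=\{B,C\}$ and $\{C,D\}$ are \emph{subsumed} by the strip conditions and so impose nothing new. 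Intersecting the surviving non-realizability conditions produces a convex hexagon with vertices $(p,-n),(p,0),(p+n,n),(-p,n),(-p,0),(-p-n,-n)$, which I would check coincides with the shaded region of Figure~\ref{MCM_typeI}; the central symmetry $\chi\mapsto-\chi$ (reflecting $M_\chi\cong M_{-\chi}^{*}$) is a convenient consistency check on the assembled shape.

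For the final assertion that the grayed area is exactly the conic locus, I would not recompute it from the criterion but invoke Theorem~\ref{conic_Hibi}: enumerate the circuits of $\widehat P$ for the type~(I) diagram (the left chain, the right chain, and the diamond formed by the two branches at the top of the right chain), write the inequalities these impose on $\calC(P)$, and identify $\calC(P)\cap\ZZ^2$ with the grayed sub-hexagon, verifying en passant that it sits inside the MCM hexagon. The main obstacle is the bookkeeping in the realizability step: keeping the weight multiplicities straight and extracting, for each of the six patterns, the \emph{sharp} inequality, together with the half-open boundary subtleties that decide which lattice points lying on the edges are actually conic, so that the final region matches the figure exactly and not merely up to a boundary layer.
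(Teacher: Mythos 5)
Your proposal is correct and follows essentially the same route as the paper's proof: decompose $B{\setminus}\{0\}$ along the walls cut out by the weights $(1,0),(0,1),(-1,0),(-1,-1)$, apply Proposition~\ref{criterion_MCM} only to the chambers of type $\Lambda_\emptyset$ and $\Lambda_{\bullet\bullet}$ (your six sets $T_\lambda$, namely $\{D\},\{A\},\{B\}$ and $\{A,D\},\{A,B\},\{B,C,D\}$, are exactly the paper's $\Lambda_2,\Lambda_4,\Lambda_6$ and $\Lambda_3,\Lambda_5,\Lambda_8$), describe each non-MCM locus via the sign conditions on the class-sums, take the complement, and read off the conic subregion from Theorem~\ref{conic_Hibi}, with your observation that the patterns $T_\lambda^c=\{B,C\}$ and $\{C,D\}$ are subsumed by the strip being a correct (if implicit in the paper) simplification. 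The only slip is an off-by-one in your final vertex list: since the non-MCM wedges are $\{c_1\ge p,\ c_1-c_2\ge p\}$ with $p=m+n+2$, the closed MCM hexagon has vertices at $c_1=m+n+1=p-1$, i.e.\ $(m+n+1,-n),(m+n+1,0),(m+2n+1,n)$ and their negatives as in Figure~\ref{MCM_typeI}, rather than at $(p,-n),(p,0),(p+n,n)$; this is precisely the boundary bookkeeping you flag and does not affect the argument.
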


\begin{figure}[H]
\begin{center}
\newcommand{\edgewidth}{0.06cm} 
\newcommand{\nodewidth}{0.06cm} 
\newcommand{\noderad}{0.15} 
\newcommand{\pmwidth}{0.65cm} 
\newcommand{\pmcolor}{lightgray} 

{\scalebox{0.35}{
\begin{tikzpicture}

\filldraw [line width=\edgewidth, fill=lightgray] (6,4)--(6,0)--(2,-4)--(-6,-4)--(-6,0)--(-2,4)--(6,4) ; 
\draw[pattern=my north west lines, pattern color=blue] (10,4)--(6,0)--(6,-4)--(-10,-4)--(-6,0)--(-6,4)--(10,4) ; 
\draw [line width=\edgewidth] (6,4)--(6,-4)--(-6,-4)--(-6,4)--(6,4) ;
\draw [line width=\edgewidth] (6,4)--(10,4)--(6,0) ; \draw [line width=\edgewidth] (-6,-4)--(-10,-4)--(-6,0) ;
\draw[->, line width=0.085cm]  (-10,0)--(10,0); \draw[->, line width=0.085cm]  (0,-6)--(0,6); 

\node at (1,4.7) {\huge $(0,n)$} ; \node at (-1.2,-4.7) {\huge $(0,-n)$} ; 
\node at (8.25,-0.65) {\huge $(m+n+1,0)$} ; \node at (-8.6,0.65) {\huge $(-m-n-1,0)$} ; 
\node at (3.6,-4.7) {\huge $(m+1,-n)$} ; \node at (-3.6,4.7) {\huge $(-m-1,n)$} ; 

\node at (10,4.65) {\huge $(m+2n+1,n)$} ; \node at (-10,-4.65) {\huge $(-m-2n-1,-n)$} ; 

\draw [line width=\nodewidth, fill=black] (0,4) circle [radius=\noderad] ; \draw [line width=\nodewidth, fill=black] (0,-4) circle [radius=\noderad] ; 
\draw [line width=\nodewidth, fill=black] (6,0) circle [radius=\noderad] ; \draw [line width=\nodewidth, fill=black] (-6,0) circle [radius=\noderad] ; 
\draw [line width=\nodewidth, fill=black] (2,-4) circle [radius=\noderad] ; \draw [line width=\nodewidth, fill=black] (-2,4) circle [radius=\noderad] ; 
\draw [line width=\nodewidth, fill=black] (10,4) circle [radius=\noderad] ; \draw [line width=\nodewidth, fill=black] (-10,-4) circle [radius=\noderad] ; 
\end{tikzpicture}
} }
\end{center}
\caption{The region of rank one MCM modules for type (I)}
\label{MCM_typeI}
\end{figure}
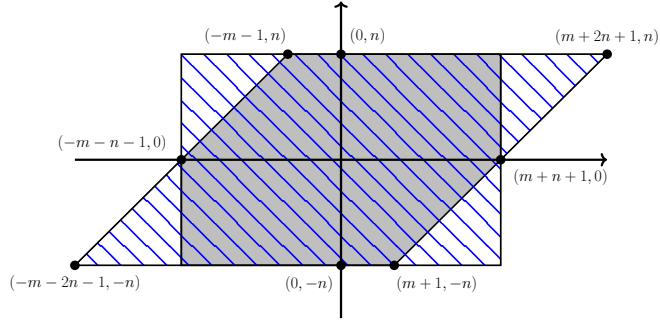

\begin{proof}
By Theorem~\ref{conic_Hibi}, $\chi=(c_1,c_2)$ represents a conic module if and only if $\chi\in\calC(P)$ where 
\begin{align*}
\calC(P)=\{\chi=(c_1,c_2)\mid -(m+n+1)\le c_1\le m+n+1, &-n\le c_2\le n,\\ &-(m+n+1)\le c_1-c_2\le m+n+1\}. 
\end{align*}
Thus, for any character $\chi$ contained in $\calC(P)$, we see that $M_\chi$ is a conic module, and hence it is MCM. 

Then, we consider rank one MCM modules that are not conic. 
In order to use Proposition~\ref{criterion_MCM}, we decompose $B{\setminus}\{0\}\subset\rmY(G)_\RR$ into the types $\Lambda_\emptyset$, $\Lambda_\bullet$, and $\Lambda_{\bullet\bullet}$ as shown in the following figure. 

\begin{center}

\begin{tikzpicture}
\node at (0,0) {
\scalebox{0.8}{\begin{tikzpicture} 
\draw [line width=0.05cm] (0,0)--(0:2cm) ; \draw [line width=0.05cm] (0,0)--(135:2cm) ; \draw [line width=0.05cm] (0,0)--(180:2cm) ; 
\draw [line width=0.05cm] (0,0)--(315:2cm) ; 
\draw [line width=0.05cm, fill=white] (0,0)--(90:2cm) node[fill=white,inner sep=0.3pt, circle, midway,xshift=0cm,yshift=0cm] {\large$\Lambda_2$} ; 
\draw [line width=0.05cm, fill=white] (0,0)--(270:2cm) node[fill=white,inner sep=0.3pt, circle, midway,xshift=0cm,yshift=0cm] {\large$\Lambda_6$} ; 
\draw [line width=0.02cm, fill=white] (0,0) circle [radius=0.07cm] ; \draw [line width=0.05cm, dashed] (0,0) circle [radius=2cm] ; 

\coordinate (C1) at (45:1.5cm); \node at (C1) {\large$\Lambda_1$}; 
\coordinate (C3) at (112.5:1.5cm); \node at (C3) {\large$\Lambda_3$}; 
\coordinate (C4) at (157.5:1.5cm); \node at (C4) {\large$\Lambda_4$}; 
\coordinate (C5) at (225:1.5cm); \node at (C5) {\large$\Lambda_5$}; 
\coordinate (C7) at (292.5:1.5cm); \node at (C7) {\large$\Lambda_7$}; 
\coordinate (C8) at (337.5:1.5cm); \node at (C8) {\large$\Lambda_8$}; 
\end{tikzpicture}
}};

\node at (7.5,0) {
\scalebox{0.8}{
\begin{tabular}{c|l|c|l}
&type &weight $\beta_i$ satisfying $\langle\lambda,\beta_i\rangle<0$ for $\lambda\in\Lambda_i$&multiplicity \\ \hline
$\Lambda_1$&$\Lambda_\bullet$& $(-1,0)$&$m+1$ \\
&&$(-1,-1)$&$n+1$\\ \hline
$\Lambda_2$&$\Lambda_\emptyset$& $(-1,-1)$&$n+1$ \\ \hline
$\Lambda_3$&$\Lambda_{\bullet\bullet}$& $(1,0)$&$m+n+2$ \\
&&$(-1,-1)$&$n+1$ \\ \hline
$\Lambda_4$&$\Lambda_\emptyset$& $(1,0)$&$m+n+2$ \\ \hline
$\Lambda_5$&$\Lambda_{\bullet\bullet}$& $(1,0)$&$m+n+2$ \\
&&$(0,1)$&$n+1$ \\ \hline
$\Lambda_6$&$\Lambda_\emptyset$& $(0,1)$&$n+1$ \\ \hline
$\Lambda_7$&$\Lambda_\bullet$& $(0,1)$&$n+1$ \\ 
&&$(-1,0)$&$m+1$ \\ \hline
$\Lambda_8$&$\Lambda_{\bullet\bullet}$& $(0,1)$&$n+1$ \\
&&$(-1,0)$&$m+1$ \\ 
&&$(-1,-1)$&$n+1$ 
\end{tabular}
}};
\end{tikzpicture} 
\end{center}

As we saw in Proposition~\ref{criterion_MCM}, we need not consider the region with the type $\Lambda_\bullet$ for determining rank one MCM modules. 
Thus, we first pay attention to the region $\Lambda_2$ which is the type $\Lambda_\emptyset$ and the weights $\beta_i$ satisfying $\langle\lambda,\beta_i\rangle<0$ for $\lambda\in\Lambda_2$ are $(-1,-1)$ with the multiplicity $n+1$. 
By Proposition~\ref{criterion_MCM}, $M_\chi$ is not MCM if and only if 
there exists integers $\bfa=(a_i)$ satisfying $\chi=\sum a_i\beta_i$ with ${\rm supp}_-\bfa=T_\lambda^c$ for some $\bar{\lambda}\in\Lambda_\emptyset\cup\Lambda_{\bullet\bullet}$, 
and such a character $\chi$ can be described as 
\begin{equation}
\label{nonMCM_character}
\chi=\sum_{s\in T_\lambda^c}a_s\beta_s+\sum_{t\in T_\lambda}a_t\beta_t\in\rmX(G)
\end{equation}
where $a_s\in\ZZ_{<0}$ and $a_t\in\ZZ_{\ge 0}$. 
For $\lambda\in\Lambda_2$, we can write this as 
$$
\chi=\sum_{h=1}^{m+n+2}(a_h,0)+\sum_{i=1}^{n+1}(0,a_i)+\sum_{j=1}^{m+1}(-a_j,0)+\sum_{k=1}^{n+1}(-a_k,-a_k)
$$
with $a_h,a_i,a_j\in\ZZ_{<0}$ and $a_k\in\ZZ_{\ge 0}$ (see the table given in Figure~\ref{poset_typeI}). 
Thus, we see that such characters are contained in the grayed area in the left of the following figure, and the corresponding modules of covariants are not MCM. 

\begin{center}
\begin{tikzpicture}
\node at (0,0) 
{\scalebox{0.7}{
\begin{tikzpicture} 
\filldraw[fill=lightgray, draw=lightgray] (-2.5,0) rectangle (2.5,-2.5) ; 
\draw [->,line width=0.05cm,red] (0,0)--(1,0) ; \draw [->, line width=0.05cm,red] (0,0)--(0,-1) ; \draw [->, line width=0.05cm,red] (0,0)--(-1,-1) ; 
\draw [->,line width=0.05cm,red] (0,0)--(-1,0) ; 
\draw [line width=0.05cm, fill=black] (0,0) circle [radius=0.08cm] ; 
\node at (0.5,0.4) {$(-n-1, -n-1)$}; 
\node[red] at (1.3,-0.35) {$(1,0)$} ; \node[red] at (0.35,-1.3) {$(0,\mathchar`-1)$} ; \node[red] at (-1.5,-1.2) {$(\mathchar`-1,\mathchar`-1)$} ; 
\node[red] at (-1.35,-0.35) {$(\mathchar`-1,0)$} ; 
\node at (1.6,-2.3) {not MCM}; 
\end{tikzpicture} 
}};

\node at (6.5,0) 
{\scalebox{0.7}{
\begin{tikzpicture} 
\filldraw[fill=lightgray, draw=lightgray] (0,0)--(2.5,0)--(2.5,-2.5)--(-2.5,-2.5)--(0,0) ; 
\draw [->,line width=0.05cm,red] (0,0)--(1,0) ; \draw [->, line width=0.05cm,red] (0,0)--(0,-1) ; \draw [->, line width=0.05cm,red] (0,0)--(-1,-1) ; 
\draw [line width=0.05cm, fill=black] (0,0) circle [radius=0.08cm] ; 
\node at (1,0.4) {$(m+1, -n-1)$}; 
\node[red] at (1.3,-0.35) {$(1,0)$} ; \node[red] at (0.35,-1.3) {$(0,\mathchar`-1)$} ; \node[red] at (-1.6,-0.7) {$(\mathchar`-1,\mathchar`-1)$} ; 
\node at (1.6,-2.3) {not MCM}; 
\end{tikzpicture} 
}};

\end{tikzpicture} 
\end{center}

Similarly, we consider the region $\Lambda_3$ which is the type $\Lambda_{\bullet\bullet}$.  The weights $\beta_i$ satisfying $\langle\lambda,\beta_i\rangle<0$ for $\lambda\in\Lambda_3$ are 
$(1,0)$ with the multiplicity $(m+n+2)$ and $(-1,-1)$ with the multiplicity $n+1$. 
Then, for $\lambda\in\Lambda_3$, we consider a character $\chi$ with the form (\ref{nonMCM_character}). 
Such character can be described as 
$$
\chi=\sum_{i=1}^{n+1}(0,a_i)+\sum_{j=1}^{m+1}(-a_j,0)+\sum_{h=1}^{m+n+2}(a_h,0)+\sum_{k=1}^{n+1}(-a_k,-a_k)
$$
with $a_i,a_j\in\ZZ_{<0}$ and $a_h,a_k\in\ZZ_{\ge 0}$. 
Thus, we see that these characters are contained in the grayed area in the right of the above figure, and the corresponding modules of covariants are not MCM. 

We repeat these arguments for the regions $\Lambda_4$, $\Lambda_5$, $\Lambda_6$, and $\Lambda_8$ which have the type $\Lambda_\emptyset$ or $\Lambda_{\bullet\bullet}$. 
Then, we have the region of characters corresponding to rank one MCM modules as shown in Figure~\ref{MCM_typeI}. 
\end{proof}

\subsection{Rank one MCM modules for the type (II)}

\begin{proposition}
\label{prop_MCM_typeII}
Let $R$ be the Gorenstein Hibi ring with $\Cl(R)\cong\ZZ^2$ associated with the poset shown in Figure~\ref{poset_typeII}. 
Then, we see that $M_\chi$ is a rank one MCM module if and only if $\chi\in\rmX(G)\cong\ZZ^2$ is contained in the shaded area in Figure~\ref{MCM_typeII}. 
In particular, the grayed area represents conic classes. 
\end{proposition}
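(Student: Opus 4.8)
The strategy mirrors the proof of Proposition~\ref{prop_MCM_typeI}. First I would read off the conic region from Theorem~\ref{conic_Hibi}: running over the circuits of $\widehat{P}$ in Figure~\ref{poset_typeII} and using the relations (\ref{relation_div_hibi}) together with the weight data listed there, one obtains an explicit polytope $\calC(P)\subset\rmX(G)_\RR\cong\RR^2$ cut out by the circuit inequalities. Every $\chi\in\calC(P)\cap\ZZ^2$ gives a conic, hence MCM, module $M_\chi$; this produces the grayed part of Figure~\ref{MCM_typeII}. The new feature compared with type (I) is that the weights now point in the five directions $(1,0),(0,1),(-1,0),(0,-1),(-1,-1)$ rather than four, so $\calC(P)$ is cut out by one additional family of inequalities.

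To capture the non-conic MCM modules I would apply Van den Bergh's criterion, Proposition~\ref{criterion_MCM}. Concretely, I partition $B{\setminus}\{0\}\subset\rmY(G)_\RR$ into the equivalence classes $B_\lambda$ determined by the five hyperplanes $\langle\lambda,\beta_i\rangle=0$, and tabulate, for each class, its type ($\Lambda_\emptyset$, $\Lambda_\bullet$, or $\Lambda_{\bullet\bullet}$) together with the weights $\beta_i$ satisfying $\langle\lambda,\beta_i\rangle<0$ and their multiplicities, exactly as in the table appearing in the proof of Proposition~\ref{prop_MCM_typeI}. Before invoking the criterion I must verify its hypothesis, namely $|T_\lambda|>1$ on every class of type $\Lambda_\emptyset$ and $|T_\lambda|>2$ on every class of type $\Lambda_{\bullet\bullet}$; this is where the constraints $\ell\ge0$, $m\ge1$, $n\ge0$ enter, and the borderline values $m=1$, $\ell=0$, $n=0$ each need a separate glance.

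For each class $\bar{\lambda}$ of type $\Lambda_\emptyset$ or $\Lambda_{\bullet\bullet}$, the criterion says that $M_\chi$ fails to be MCM precisely when $\chi$ admits an expression
\[
\chi=\sum_{s\in T_\lambda^c}a_s\beta_s+\sum_{t\in T_\lambda}a_t\beta_t, \qquad a_s\in\ZZ_{<0},\ a_t\in\ZZ_{\ge0},
\]
that is, ${\rm supp}_-\bfa=T_\lambda^c$. Summing the contributions as in the displayed computations for type (I), each such class contributes a translated rational cone of ``forbidden'' characters, whose apex is obtained by taking the extreme weights in $T_\lambda^c$ with coefficient $-1$. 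I would compute these cones one class at a time, discarding the $\Lambda_\bullet$ classes since the criterion ignores them, and then remove the union of the forbidden cones from $\rmX(G)$.

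Finally, intersecting the complement of the forbidden region with $\rmX(G)\cong\ZZ^2$ and comparing with $\calC(P)$ should reproduce exactly the shaded area of Figure~\ref{MCM_typeII}, with the grayed sub-area being $\calC(P)$. The main obstacle I anticipate is purely combinatorial bookkeeping: the extra weight direction $(0,-1)$ increases the number of chambers, so there are more $\Lambda_\emptyset$ and $\Lambda_{\bullet\bullet}$ classes to process, and the forbidden cones from neighbouring chambers must be assembled so that their union has precisely the boundary drawn in Figure~\ref{MCM_typeII}; additional care is needed at the degenerate parameter values where a weight multiplicity drops and a class could violate the hypothesis of Proposition~\ref{criterion_MCM}.
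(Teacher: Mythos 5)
Your proposal follows essentially the same route as the paper: read off the conic polytope $\calC(P)$ from Theorem~\ref{conic_Hibi}, decompose $B{\setminus}\{0\}$ into the ten chambers cut out by the five weight directions, and for each chamber of type $\Lambda_\emptyset$ or $\Lambda_{\bullet\bullet}$ apply Proposition~\ref{criterion_MCM} to carve out a translated cone of non-MCM characters, exactly as in the displayed computation for type (I). The paper's proof is just this argument compressed to the chamber table plus a reference back to Proposition~\ref{prop_MCM_typeI}, so your outline (including the sensible extra caution about the hypothesis of the criterion at the borderline parameter values) is correct and matches it.
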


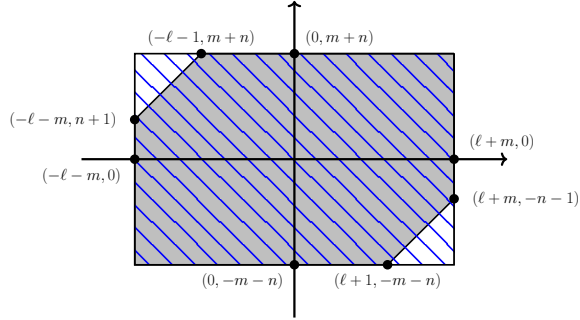
\begin{figure}[H]
\begin{center}
\newcommand{\edgewidth}{0.06cm} 
\newcommand{\nodewidth}{0.06cm} 
\newcommand{\noderad}{0.15} 
\newcommand{\pmwidth}{0.65cm} 
\newcommand{\pmcolor}{lightgray} 

{\scalebox{0.35}{
\begin{tikzpicture}

\filldraw [line width=\edgewidth, fill=lightgray] (6,4)--(6,-1.5)--(3.5,-4)--(-6,-4)--(-6,1.5)--(-3.5,4)--(6,4) ; 
\draw[pattern=my north west lines, pattern color=blue] (-6,-4) rectangle (6,4);
\draw [line width=\edgewidth] (6,4)--(6,-4)--(-6,-4)--(-6,4)--(6,4) ;
\draw[->, line width=0.085cm]  (-8,0)--(8,0); \draw[->, line width=0.085cm]  (0,-6)--(0,6); 

\node at (1.7,4.6) {\huge $(0,m+n)$} ; \node at (-1.95,-4.6) {\huge $(0,-m-n)$} ; 
\node at (7.8,0.65) {\huge $(\ell+m,0)$} ; \node at (-8,-0.65) {\huge $(-\ell-m,0)$} ; 

\node at (8.7,-1.5) {\huge $(\ell+m,-n-1)$} ; \node at (-8.7,1.5) {\huge $(-\ell-m,n+1)$} ; 
\node at (3.5,-4.6) {\huge $(\ell+1,-m-n)$} ; \node at (-3.5,4.6) {\huge $(-\ell-1,m+n)$} ; 

\draw [line width=\nodewidth, fill=black] (0,4) circle [radius=\noderad] ; \draw [line width=\nodewidth, fill=black] (0,-4) circle [radius=\noderad] ; 
\draw [line width=\nodewidth, fill=black] (6,0) circle [radius=\noderad] ; \draw [line width=\nodewidth, fill=black] (-6,0) circle [radius=\noderad] ; 
\draw [line width=\nodewidth, fill=black] (6,-1.5) circle [radius=\noderad] ; \draw [line width=\nodewidth, fill=black] (3.5,-4) circle [radius=\noderad] ; 
\draw [line width=\nodewidth, fill=black] (-6,1.5) circle [radius=\noderad] ; \draw [line width=\nodewidth, fill=black] (-3.5,4) circle [radius=\noderad] ; 

\end{tikzpicture}
} }
\end{center}
\caption{The region of rank one MCM modules for type (II)}
\label{MCM_typeII}
\end{figure}

\begin{proof}
By Theorem~\ref{conic_Hibi}, $\chi=(c_1,c_2)$ represents a conic module if and only if $\chi\in\calC(P)$ where 
\begin{align*}
\calC(P)=\{\chi=(c_1,c_2)\mid -(\ell+m)\le c_1\le\ell+m, &-(m+n)\le c_2\le m+n,\\ &-(\ell+m+n+1)\le c_1-c_2\le\ell+m+n+1\}. 
\end{align*}

The remaining assertion follows from the decomposition of $B{\setminus}\{0\}$ shown below, and a similar argument as in the proof of Proposition~\ref{prop_MCM_typeI}.

\begin{center}

\begin{tikzpicture}
\node at (0,0) {
\scalebox{0.8}{\begin{tikzpicture} 
\draw [line width=0.05cm] (0,0)--(0:2cm) ; 
\draw [line width=0.05cm] (0,0)--(135:2cm) ; \draw [line width=0.05cm] (0,0)--(180:2cm) ; 
\draw [line width=0.05cm] (0,0)--(315:2cm) ; 
\draw [line width=0.05cm, fill=white] (0,0)--(90:2cm) node[fill=white,inner sep=0.3pt, circle, midway,xshift=0cm,yshift=0cm] {\large$\Lambda_2$} ; 
\draw [line width=0.05cm, fill=white] (0,0)--(180:2cm) node[fill=white,inner sep=0.3pt, circle, midway,xshift=0cm,yshift=0cm] {\large$\Lambda_5$} ; 
\draw [line width=0.05cm, fill=white] (0,0)--(270:2cm) node[fill=white,inner sep=0.3pt, circle, midway,xshift=0cm,yshift=0cm] {\large$\Lambda_7$} ; 
\draw [line width=0.05cm, fill=white] (0,0)--(0:2cm) node[fill=white,inner sep=0.3pt, circle, midway,xshift=0cm,yshift=0cm] {\large$\Lambda_{10}$} ; 
\draw [line width=0.02cm, fill=white] (0,0) circle [radius=0.07cm] ; \draw [line width=0.05cm, dashed] (0,0) circle [radius=2cm] ; 

\coordinate (C1) at (45:1.5cm); \node at (C1) {\large$\Lambda_1$}; 
\coordinate (C3) at (112.5:1.5cm); \node at (C3) {\large$\Lambda_3$}; 
\coordinate (C4) at (157.5:1.5cm); \node at (C4) {\large$\Lambda_4$}; 
\coordinate (C6) at (225:1.5cm); \node at (C6) {\large$\Lambda_6$}; 

\coordinate (C8) at (292.5:1.5cm); \node at (C8) {\large$\Lambda_8$}; 
\coordinate (C9) at (337.5:1.5cm); \node at (C9) {\large$\Lambda_9$}; 
\end{tikzpicture}
}};

\node at (7.5,0) {
\scalebox{0.8}{
\begin{tabular}{c|l|c|l}
&type &weight $\beta_i$ satisfying $\langle\lambda,\beta_i\rangle<0$ for $\lambda\in\Lambda_i$&multiplicity\\ \hline
$\Lambda_1$&$\Lambda_{\bullet\bullet}$& $(0,-1)$&$n+1$ \\
&&$(-1,-1)$&$m$ \\ 
&&$(-1,0)$&$\ell+1$ \\ \hline
$\Lambda_2$&$\Lambda_\emptyset$& $(0,-1)$&$n+1$ \\ 
&&$(-1,-1)$&$m$ \\ \hline
$\Lambda_3$&$\Lambda_{\bullet\bullet}$&$(0,-1)$&$n+1$ \\ 
&&$(-1,-1)$&$m$ \\
&&$(1,0)$&$\ell+m+1$ \\ \hline 
$\Lambda_4$&$\Lambda_\bullet$&$(0,-1)$&$n+1$ \\
&&$(1,0)$&$\ell+m+1$ \\ \hline
$\Lambda_5$&$\Lambda_\emptyset$& $(1,0)$&$\ell+m+1$ \\ \hline
$\Lambda_6$&$\Lambda_{\bullet\bullet}$&$(1,0)$&$\ell+m+1$ \\ 
&&$(0,1)$&$m+n+1$ \\ \hline
$\Lambda_7$&$\Lambda_\emptyset$& $(0,1)$&$m+n+1$ \\ \hline
$\Lambda_8$&$\Lambda_\bullet$& $(0,1)$&$m+n+1$ \\ 
&&$(-1,0)$&$\ell+1$ \\ \hline
$\Lambda_9$&$\Lambda_{\bullet\bullet}$&$(0,1)$&$m+n+1$ \\ 
&&$(-1,0)$&$\ell+1$ \\ 
&&$(-1,-1)$&$m$ \\ \hline
$\Lambda_{10}$&$\Lambda_\emptyset$&$(-1,0)$&$\ell+1$ \\
&&$(-1,-1)$&$m$ 
\end{tabular}
}};

\end{tikzpicture} 
\end{center}
\end{proof}

\subsection{Rank one MCM modules for the type (III)}

\begin{proposition}
\label{prop_MCM_typeIII}
Let $R$ be the Gorenstein Hibi ring with $\Cl(R)\cong\ZZ^2$ associated with the poset shown in Figure~\ref{poset_typeIII}. 
Then, we see that $M_\chi$ is a rank one MCM module if and only if $\chi\in\rmX(G)\cong\ZZ^2$ is contained in the shaded area in Figure~\ref{MCM_typeIII}. 
In particular, the grayed area represents conic classes. 
\end{proposition}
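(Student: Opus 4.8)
The plan is to mirror the two-step structure of the proofs of Proposition~\ref{prop_MCM_typeI} and Proposition~\ref{prop_MCM_typeII}: first isolate the conic classes by means of Theorem~\ref{conic_Hibi}, and then detect the remaining (non-conic) rank one MCM modules by Van den Bergh's criterion in Proposition~\ref{criterion_MCM}.

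First I would compute the conic region $\calC(P)$. Writing $\chi=(c_1,c_2)$ with respect to the generators $\calD_a=(1,0)$ and $\calD_b=(0,1)$ and reading the weights off Figure~\ref{poset_typeIII}, I would run through the circuits of $\widehat{P}$ --- coming from the left chain paired with either branch of the central diamond, and from the two branches paired with each other --- and translate each into an inequality via (\ref{ccp}). Equivalently, since the conic classes are exactly the strongly critical characters $\sum_i a_i\beta_i$ with $a_i\in(-1,0]$, I can read the three facet pairs directly from the half-open zonotope generated by the weights, whose edge directions are $(1,0)$, $(0,1)$, and $(1,1)$. Either route should give
\begin{align*}
\calC(P)=\{\chi=(c_1,c_2)\mid -(\ell+m+n+1)\le c_1\le \ell+m+n+1, &-(m-1)\le c_2\le m-1,\\ &-(\ell+m+n+1)\le c_1-c_2\le \ell+m+n+1\},
\end{align*}
and every lattice point of $\calC(P)$ yields a conic, hence MCM, module; this is the grayed region of Figure~\ref{MCM_typeIII}.

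Next I would treat the non-conic MCM modules. The distinct weights of the type (III) poset are $(1,0),(0,1),(-1,0),(-1,-1)$, exactly the four directions occurring for the type (I) poset, so the hyperplane arrangement in $\rmY(G)_\RR$, and hence the decomposition of $B\setminus\{0\}$ into eight regions $\Lambda_1,\dots,\Lambda_8$ of types $\Lambda_\emptyset,\Lambda_\bullet,\Lambda_{\bullet\bullet}$, is combinatorially identical to the one in Proposition~\ref{prop_MCM_typeI}; only the multiplicity attached to each weight changes according to the table in Figure~\ref{poset_typeIII}. Before applying the criterion I must check its standing hypotheses, $|T_\lambda|>1$ on every $\bar\lambda\in\Lambda_\emptyset$ and $|T_\lambda|>2$ on every $\bar\lambda\in\Lambda_{\bullet\bullet}$; here the assumption $m\ge 2$ is precisely what forces these, since the two $\Lambda_\emptyset$-regions whose only negative weight is $(0,1)$ or $(-1,-1)$ each have $|T_\lambda|=m$. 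For every region of type $\Lambda_\emptyset$ or $\Lambda_{\bullet\bullet}$ I would then write a general character of the forbidden shape (\ref{nonMCM_character}), $\chi=\sum_{s\in T_\lambda^c}a_s\beta_s+\sum_{t\in T_\lambda}a_t\beta_t$ with $a_s<0$ and $a_t\ge 0$, and read off the cone of characters that fail to be MCM, exactly as was done for $\Lambda_2$ and $\Lambda_3$ in the type (I) proof.

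Finally, the rank one MCM locus is the complement in $\rmX(G)$ of the union of these forbidden cones, and intersecting the contributions of the six relevant regions should carve out precisely the shaded region of Figure~\ref{MCM_typeIII}, with the grayed conic sub-hexagon already contained in it. The step I expect to be the main obstacle is the purely combinatorial bookkeeping in the last two paragraphs: keeping the (now $\ell$-, $m$-, $n$-dependent) multiplicities straight across the three $\Lambda_{\bullet\bullet}$-regions, and making sure each forbidden cone is clipped off by exactly the right weight, since a single off-by-one error --- for instance from miscounting $|X_C^\pm|$ for a circuit running through the diamond --- would move a corner of Figure~\ref{MCM_typeIII}. A useful internal consistency check is to confirm that the boundary of $\calC(P)$ computed above sits inside the MCM boundary and meets it exactly along the edges predicted by the type (I) pattern.
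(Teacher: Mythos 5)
Your proposal is correct and follows essentially the same route as the paper: compute $\calC(P)$ from Theorem~\ref{conic_Hibi} (equivalently from the strongly critical characters), then reuse the eight-region decomposition of $B{\setminus}\{0\}$ from the type (I) case with the multiplicities of Figure~\ref{poset_typeIII} and apply Proposition~\ref{criterion_MCM} region by region. Your bound $-(m-1)\le c_2\le m-1$ is the one consistent with Figure~\ref{MCM_typeIII} (the paper's displayed ``$c_2\le m+1$'' is a typo), and your observation that $m\ge 2$ is exactly what guarantees the standing hypotheses $|T_\lambda|>1$ on the $\Lambda_\emptyset$-regions is a point the paper leaves implicit.
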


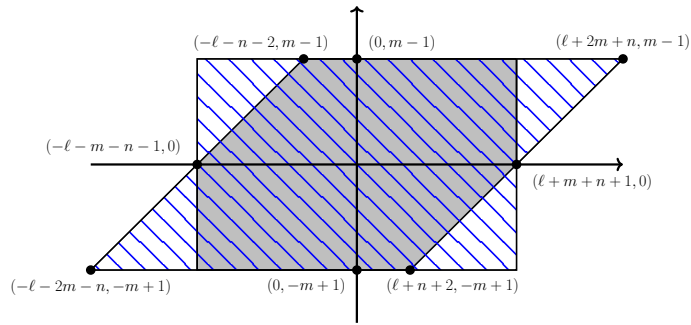
\begin{figure}[H]
\begin{center}
\newcommand{\edgewidth}{0.06cm} 
\newcommand{\nodewidth}{0.06cm} 
\newcommand{\noderad}{0.15} 
\newcommand{\pmwidth}{0.65cm} 
\newcommand{\pmcolor}{lightgray} 

{\scalebox{0.35}{
\begin{tikzpicture}

\filldraw [line width=\edgewidth, fill=lightgray] (6,4)--(6,0)--(2,-4)--(-6,-4)--(-6,0)--(-2,4)--(6,4) ; 
\draw[pattern=my north west lines, pattern color=blue] (10,4)--(6,0)--(6,-4)--(-10,-4)--(-6,0)--(-6,4)--(10,4) ; 
\draw [line width=\edgewidth] (6,4)--(6,-4)--(-6,-4)--(-6,4)--(6,4) ;
\draw [line width=\edgewidth] (6,4)--(10,4)--(6,0) ; \draw [line width=\edgewidth] (-6,-4)--(-10,-4)--(-6,0) ;
\draw[->, line width=0.085cm]  (-10,0)--(10,0); \draw[->, line width=0.085cm]  (0,-6)--(0,6); 

\node at (1.7,4.6) {\huge $(0,m-1)$} ; \node at (-1.9,-4.6) {\huge $(0,-m+1)$} ; 
\node at (8.85,-0.65) {\huge $(\ell+m+n+1,0)$} ; \node at (-9.15,0.65) {\huge $(-\ell-m-n-1,0)$} ; 
\node at (3.6,-4.6) {\huge $(\ell+n+2,-m+1)$} ; \node at (-3.6,4.6) {\huge $(-\ell-n-2,m-1)$} ; 

\node at (10,4.65) {\huge $(\ell+2m+n,m-1)$} ; \node at (-10,-4.65) {\huge $(-\ell-2m-n,-m+1)$} ; 

\draw [line width=\nodewidth, fill=black] (0,4) circle [radius=\noderad] ; \draw [line width=\nodewidth, fill=black] (0,-4) circle [radius=\noderad] ; 
\draw [line width=\nodewidth, fill=black] (6,0) circle [radius=\noderad] ; \draw [line width=\nodewidth, fill=black] (-6,0) circle [radius=\noderad] ; 
\draw [line width=\nodewidth, fill=black] (2,-4) circle [radius=\noderad] ; \draw [line width=\nodewidth, fill=black] (-2,4) circle [radius=\noderad] ; 
\draw [line width=\nodewidth, fill=black] (10,4) circle [radius=\noderad] ; \draw [line width=\nodewidth, fill=black] (-10,-4) circle [radius=\noderad] ; 
\end{tikzpicture}
} }
\end{center}
\caption{The region of rank one MCM modules for type (III)}
\label{MCM_typeIII}
\end{figure}

\begin{proof}
By Theorem~\ref{conic_Hibi}, $\chi=(c_1,c_2)$ represents a conic module if and only if $\chi\in\calC(P)$ where 
\begin{align*}
\calC(P)=\{\chi=(c_1,c_2)\mid -(m-1)\le c_2\le m+1, & -(\ell+m+n+1)\le c_1\le \ell+m+n+1, \\ &-(\ell+m+n+1)\le c_1-c_2\le \ell+m+n+1\}. 
\end{align*}

The remaining assertion follows from the decomposition of $B{\setminus}\{0\}$ shown below, and a similar argument as in the proof of Proposition~\ref{prop_MCM_typeI}.

\begin{center}

\begin{tikzpicture}
\node at (0,0) {
\scalebox{0.8}{\begin{tikzpicture} 
\draw [line width=0.05cm] (0,0)--(0:2cm) ; 
\draw [line width=0.05cm] (0,0)--(135:2cm) ; \draw [line width=0.05cm] (0,0)--(180:2cm) ; 
\draw [line width=0.05cm] (0,0)--(315:2cm) ; 
\draw [line width=0.05cm, fill=white] (0,0)--(90:2cm) node[fill=white,inner sep=0.3pt, circle, midway,xshift=0cm,yshift=0cm] {\large$\Lambda_2$} ; 
\draw [line width=0.05cm, fill=white] (0,0)--(270:2cm) node[fill=white,inner sep=0.3pt, circle, midway,xshift=0cm,yshift=0cm] {\large$\Lambda_6$} ; 
\draw [line width=0.02cm, fill=white] (0,0) circle [radius=0.07cm] ; \draw [line width=0.05cm, dashed] (0,0) circle [radius=2cm] ; 

\coordinate (C1) at (45:1.5cm); \node at (C1) {\large$\Lambda_1$}; 
\coordinate (C3) at (112.5:1.5cm); \node at (C3) {\large$\Lambda_3$}; 
\coordinate (C4) at (157.5:1.5cm); \node at (C4) {\large$\Lambda_4$}; 
\coordinate (C5) at (225:1.5cm); \node at (C5) {\large$\Lambda_5$}; 
\coordinate (C7) at (292.5:1.5cm); \node at (C7) {\large$\Lambda_7$}; 
\coordinate (C8) at (337.5:1.5cm); \node at (C8) {\large$\Lambda_8$}; 
\end{tikzpicture}
}};

\node at (7.5,0) {
\scalebox{0.8}{
\begin{tabular}{c|l|c|l}
&type &weight $\beta_i$ satisfying $\langle\lambda,\beta_i\rangle<0$ for $\lambda\in\Lambda_i$&multiplicity\\ \hline
$\Lambda_1$&$\Lambda_\bullet$& $(-1,-1)$&$m$ \\ 
&&$(-1,0)$&$\ell+n+2$ \\ \hline
$\Lambda_2$&$\Lambda_\emptyset$&$(-1,-1)$&$m$ \\ \hline
$\Lambda_3$&$\Lambda_{\bullet\bullet}$&$(-1,-1)$&$m$ \\
&&$(1,0)$&$\ell+m+n+2$ \\ \hline
$\Lambda_4$&$\Lambda_\emptyset$&$(1,0)$&$\ell+m+n+2$ \\ \hline
$\Lambda_5$&$\Lambda_{\bullet\bullet}$&$(1,0)$&$\ell+m+n+2$ \\ 
&&$(0,1)$&$m$ \\ \hline
$\Lambda_6$&$\Lambda_\emptyset$&$(0,1)$&$m$ \\ \hline
$\Lambda_7$&$\Lambda_\bullet$&$(0,1)$&$m$ \\  
&&$(-1,0)$&$\ell+n+2$ \\ \hline
$\Lambda_8$&$\Lambda_{\bullet\bullet}$&$(0,1)$&$m$ \\ 
&&$(-1,0)$&$\ell+n+2$ \\ 
&&$(-1,-1)$&$m$
\end{tabular}
}};

\end{tikzpicture} 
\end{center}
\end{proof}

\subsection{Rank one MCM modules for the type (IV)}

\begin{proposition}
\label{prop_MCM_typeIV}
Let $R$ be the Gorenstein Hibi ring with $\Cl(R)\cong\ZZ^2$ associated with the poset shown in Figure~\ref{poset_typeIV}. 
Then, we see that $M_\chi$ is a rank one MCM module if and only if $\chi\in\rmX(G)\cong\ZZ^2$ is contained in the shaded area in Figure~\ref{MCM_typeIV}. 
In particular, the grayed area represents conic classes. 
(In this case, rank one MCM modules are precisely conic ones.)
\end{proposition}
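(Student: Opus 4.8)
The plan is to follow the same template as in the proof of Proposition~\ref{prop_MCM_typeI}, exploiting the extra symmetry of the type (IV) weights: besides $\beta_a=(1,0)$ and $\beta_b=(0,1)$, the only weights are $(-1,0)$ and $(0,-1)$, occurring with multiplicities $m+1,\,n+1,\,m+1,\,n+1$ respectively (see Figure~\ref{poset_typeIV}). Since conic modules are always MCM, it suffices to (i) compute the conic region $\calC(P)$ and (ii) show that the MCM region cut out by Van den Bergh's criterion coincides with it; equality then gives both the shaded region of Figure~\ref{MCM_typeIV} and the final clause that MCM and conic classes agree.

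First I would compute $\calC(P)$ via Theorem~\ref{conic_Hibi}. The Hasse diagram is two ``diamonds'' glued at the unique degree-four vertex, so up to orientation there are exactly two circuits: the lower diamond and the upper diamond. Along each chain the edges share a common class by (\ref{relation_div_hibi}), and the two lower chains carry opposite classes $(1,0)$ and $(-1,0)$ because of the relation $\sum_{q\in U(\hat{0})}\calD_{\{q,\hat{0}\}}=0$; each of these chains has $m+1$ edges, matching the multiplicities. The non-tree edge $a$ lies on the lower diamond and $b$ on the upper one, so the circuit inequalities of Theorem~\ref{conic_Hibi} read $-m\le c_1\le m$ for the lower circuit and, by the same argument upstairs, $-n\le c_2\le n$ for the upper one. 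Thus $\calC(P)=\{(c_1,c_2)\mid -m\le c_1\le m,\ -n\le c_2\le n\}$, the grayed rectangle of Figure~\ref{MCM_typeIV}.

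Next, to determine all MCM classes I would decompose $B\setminus\{0\}\subset\rmY(G)_\RR$ as in Proposition~\ref{prop_MCM_typeI}. Because every weight points along a coordinate axis, the walls $\langle\lambda,\beta_i\rangle=0$ are precisely the two coordinate axes in $\rmY(G)_\RR$, splitting $B\setminus\{0\}$ into four open quadrants and four wall rays. A direct check of $T_\lambda$ shows each wall ray has $T_\lambda$ a single weight-class (of multiplicity $m+1$ or $n+1$) while each quadrant has $T_\lambda$ a pair of them; hence every region is of type $\Lambda_\emptyset$, there are no $\Lambda_{\bullet\bullet}$ regions, and the hypothesis $|T_\lambda|>1$ of Proposition~\ref{criterion_MCM} holds thanks to $m\ge 1$ and $n\ge 1$. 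Applying the criterion region by region through the form (\ref{nonMCM_character}), the four wall rays produce the half-planes $c_1\le -(m+1)$, $c_1\ge m+1$, $c_2\le -(n+1)$, $c_2\ge n+1$ of non-MCM classes, while each quadrant yields only a quarter-plane already contained in the union of two of these half-planes. Therefore the non-MCM locus is exactly the union of the four half-planes, and its complement is the rectangle $\calC(P)$; this proves $M_\chi$ is MCM if and only if $\chi\in\calC(P)$.

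The routine part is the sign bookkeeping in the four applications of (\ref{nonMCM_character}); the only point requiring care is checking that the quadrant contributions are genuinely absorbed by the wall half-planes, so that no MCM class is gained or lost and, in particular, the non-conic MCM modules appearing in types (I)--(III) do \emph{not} occur here. This absorption---equivalently, the coincidence of the MCM and conic regions---is exactly the manifestation of the quasi-symmetry of the type (IV) weights.
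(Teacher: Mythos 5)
Your proposal follows the same route as the paper: compute $\calC(P)$ from Theorem~\ref{conic_Hibi} (correctly obtaining the rectangle $-m\le c_1\le m$, $-n\le c_2\le n$ from the two diamond circuits), decompose $B{\setminus}\{0\}$ into the four open quadrants and four coordinate rays, and run Proposition~\ref{criterion_MCM} region by region via the form (\ref{nonMCM_character}). Your bookkeeping of the non-MCM loci (four half-planes from the rays, quarter-planes from the quadrants absorbed into them) is correct and yields exactly Figure~\ref{MCM_typeIV}, so the conclusion and the "MCM $=$ conic" clause are established.

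There is, however, one genuine misstatement: you classify every region as type $\Lambda_\emptyset$ on the grounds that the number of distinct weight-classes in $T_\lambda$ is one or two, and you assert there are no $\Lambda_{\bullet\bullet}$ regions. That is not what the types $\Lambda_\emptyset$, $\Lambda_\bullet$, $\Lambda_{\bullet\bullet}$ measure: following \cite[Corollary~4.1.1]{VdB1}, they record whether the equivalence class $B_\lambda$ contains neither, one, or both of its boundary faces, not the cardinality of $\{\beta_i\}\cap T_\lambda$ up to multiplicity. In the paper's table for type (IV) the four open quadrants are of type $\Lambda_{\bullet\bullet}$ (moving $\lambda$ onto either bounding axis changes $T_\lambda$, so $B_\lambda$ is the open quadrant) and only the four rays are $\Lambda_\emptyset$. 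Your error happens to be harmless here for two reasons: the criterion requires checking all regions of type $\Lambda_\emptyset$ and $\Lambda_{\bullet\bullet}$ alike (only $\Lambda_\bullet$ may be skipped, and there are none in type (IV)), so you examined every region anyway; and the hypothesis actually required for the quadrants is $|T_\lambda|>2$, which holds since $|T_\lambda|=m+n+2\ge 4$, even though you only verified $|T_\lambda|>1$. But the same "count the weight-classes" heuristic would misclassify the genuine $\Lambda_\bullet$ regions occurring in types (I)--(III) (e.g.\ $\Lambda_1$ and $\Lambda_7$ of Proposition~\ref{prop_MCM_typeI}, which have two weight-classes in $T_\lambda$ yet may be skipped), so you should replace it with the correct boundary-face criterion.
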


\begin{figure}[H]
\begin{center}
\newcommand{\edgewidth}{0.06cm} 
\newcommand{\nodewidth}{0.06cm} 
\newcommand{\noderad}{0.15} 
\newcommand{\pmwidth}{0.65cm} 
\newcommand{\pmcolor}{lightgray} 

{\scalebox{0.35}{
\begin{tikzpicture}

\filldraw [line width=\edgewidth, fill=lightgray] (6,4)--(-6,4)--(-6,-4)--(6,-4)--(6,4) ; 
\draw[pattern=my north west lines, pattern color=blue] (6,4)--(-6,4)--(-6,-4)--(6,-4)--(6,4) ; 
\draw[->, line width=0.085cm]  (-8,0)--(8,0); \draw[->, line width=0.085cm]  (0,-6)--(0,6); 

\node at (1,4.5) {\huge $(0,n)$} ; \node at (-1.2,-4.5) {\huge $(0,-n)$} ; 
\node at (7.1,-0.65) {\huge $(m,0)$} ; \node at (-7.35,0.65) {\huge $(-m,0)$} ; 

\draw [line width=\nodewidth, fill=black] (0,4) circle [radius=\noderad] ; \draw [line width=\nodewidth, fill=black] (0,-4) circle [radius=\noderad] ; 
\draw [line width=\nodewidth, fill=black] (6,0) circle [radius=\noderad] ; \draw [line width=\nodewidth, fill=black] (-6,0) circle [radius=\noderad] ; 
\end{tikzpicture}
} }
\end{center}
\caption{The region of rank one MCM modules for type (IV)}
\label{MCM_typeIV}
\end{figure}
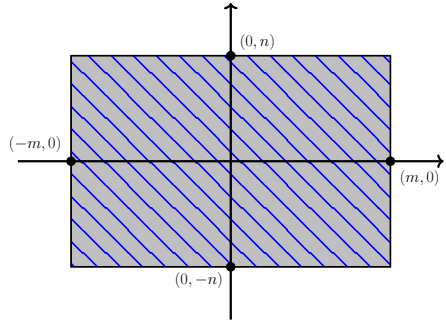

\begin{proof}
By Theorem~\ref{conic_Hibi}, $\chi=(c_1,c_2)$ represents a conic module if and only if $\chi\in\calC(P)$ where 
$$
\calC(P)=\{\chi=(c_1,c_2)\mid -m\le c_1\le m, -n\le c_2\le n \}. 
$$

The remaining assertion follows from the decomposition of $B{\setminus}\{0\}$ shown below, and a similar argument as in the proof of Proposition~\ref{prop_MCM_typeI}.

\begin{center}

\begin{tikzpicture}
\node at (0,0) {
\scalebox{0.8}{\begin{tikzpicture} 
\draw [line width=0.05cm] (0,0)--(0:2cm) ; 
\draw [line width=0.05cm] (0,0)--(180:2cm) ; 

\draw [line width=0.05cm, fill=white] (0,0)--(90:2cm) node[fill=white,inner sep=0.3pt, circle, midway,xshift=0cm,yshift=0cm] {\large$\Lambda_2$} ; 
\draw [line width=0.05cm, fill=white] (0,0)--(180:2cm) node[fill=white,inner sep=0.3pt, circle, midway,xshift=0cm,yshift=0cm] {\large$\Lambda_4$} ; 
\draw [line width=0.05cm, fill=white] (0,0)--(270:2cm) node[fill=white,inner sep=0.3pt, circle, midway,xshift=0cm,yshift=0cm] {\large$\Lambda_6$} ; 
\draw [line width=0.05cm, fill=white] (0,0)--(0:2cm) node[fill=white,inner sep=0.3pt, circle, midway,xshift=0cm,yshift=0cm] {\large$\Lambda_8$} ; 
\draw [line width=0.02cm, fill=white] (0,0) circle [radius=0.07cm] ; \draw [line width=0.05cm, dashed] (0,0) circle [radius=2cm] ; 

\coordinate (C1) at (45:1.5cm); \node at (C1) {\large$\Lambda_1$}; 
\coordinate (C3) at (135:1.5cm); \node at (C3) {\large$\Lambda_3$}; 
\coordinate (C5) at (225:1.5cm); \node at (C5) {\large$\Lambda_5$}; 
\coordinate (C7) at (315:1.5cm); \node at (C7) {\large$\Lambda_7$}; 
\end{tikzpicture}
}};

\node at (7.5,0) {
\scalebox{0.8}{
\begin{tabular}{c|l|c|l}
&type &weight $\beta_i$ satisfying $\langle\lambda,\beta_i\rangle<0$ for $\lambda\in\Lambda_i$&multiplicity\\ \hline
$\Lambda_1$&$\Lambda_{\bullet\bullet}$&$(0,-1)$&$n+1$ \\
&&$(-1,0)$&$m+1$ \\ \hline
$\Lambda_2$&$\Lambda_\emptyset$&$(0,-1)$&$n+1$ \\ \hline
$\Lambda_3$&$\Lambda_{\bullet\bullet}$&$(0,-1)$&$n+1$ \\ 
&&$(1,0)$&$m+1$ \\ \hline
$\Lambda_4$&$\Lambda_\emptyset$&$(1,0)$&$m+1$ \\ \hline
$\Lambda_5$&$\Lambda_{\bullet\bullet}$&$(1,0)$&$m+1$ \\
&&$(0,1)$&$n+1$ \\ \hline
$\Lambda_6$&$\Lambda_\emptyset$&$(0,1)$&$n+1$ \\ \hline
$\Lambda_7$&$\Lambda_{\bullet\bullet}$&$(0,1)$&$n+1$ \\ 
&&$(-1,0)$&$m+1$ \\ \hline
$\Lambda_8$&$\Lambda_\emptyset$&$(-1,0)$&$m+1$
\end{tabular}
}};

\end{tikzpicture} 
\end{center}
\end{proof}

\subsection{Rank one MCM modules for the type (V)}

For a Hibi ring which is realized as the Segre product of polynomial rings, the rank one MCM modules have already been studied in \cite[Section~2]{Bru} (see also \cite[Proof of Lemma~3.8]{HN}). 
For the completeness, we give the list of rank one MCM modules over the Hibi ring arising from the poset shown in Figure~\ref{poset_typeV}, which is obtained by restricting the arguments in \cite[Section~2]{Bru} to our situation. 

\begin{proposition}
\label{prop_MCM_typeV}
Let $R$ be the Gorenstein Hibi ring with $\Cl(R)\cong\ZZ^2$ associated with the poset shown in Figure~\ref{poset_typeV}. 
Then, we see that $M_\chi$ is a rank one MCM module if and only if $\chi\in\rmX(G)\cong\ZZ^2$ is contained in the shaded area in Figure~\ref{MCM_typeV}. 
In particular, the grayed area represents conic classes. 
\end{proposition}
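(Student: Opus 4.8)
My plan is to run the same machinery as in the proof of Proposition~\ref{prop_MCM_typeI}, while exploiting the fact that the type (V) ring is a triple Segre product to pin down the answer cleanly. First I would record the combinatorial data: the three chains of $\widehat{P}$ force every edge of a given chain to carry a common weight, so the weights are $(1,0)$, $(0,1)$, $(-1,-1)$, each with multiplicity $n+2$ (Figure~\ref{poset_typeV}). Feeding the circuits of $\widehat{P}$---obtained by pairing two of the three chains, so that $|X_C^+|=|X_C^-|=n+2$---into Theorem~\ref{conic_Hibi} yields the conic hexagon
\[
\calC(P)=\{(c_1,c_2)\mid |c_1|\le n+1,\ |c_2|\le n+1,\ |c_1-c_2|\le n+1\},
\]
which is the grayed region of Figure~\ref{MCM_typeV}; every $\chi\in\calC(P)$ gives a conic, hence MCM, module.

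For the remaining (non-conic) MCM modules I would use that $R$ is isomorphic to the Segre product $A\#B\#C$ of three polynomial rings $A,B,C$, each in $n+2$ variables (see \cite[Example~2.6]{HN}). Writing $M_\chi=M_{(a,b,c)}$ with $(c_1,c_2)=(a-c,b-c)$, so that $M_{(a,b,c)}=\bigoplus_{d}A_{d+a}\otimes_\kk B_{d+b}\otimes_\kk C_{d+c}$, the module is MCM exactly when it has no intermediate local cohomology; by the K\"unneth formula on $X=(\mathbb{P}^{n+1})^3$ this amounts to the vanishing of $H^i(X,\calO(a+d,b+d,c+d))$ for every $d\in\ZZ$ and every $0<i<\dim X=3(n+1)$. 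Since each $\mathbb{P}^{n+1}$ has cohomology only in degrees $0$ and $n+1$, with any factor twisted into the range $[-(n+1),-1]$ annihilating all cohomology of that factor, the vanishing reduces to the purely combinatorial statement that, when $a,b,c$ are listed in increasing order, both consecutive gaps are at most $n+1$. This is exactly the restriction of \cite[Section~2]{Bru} to our situation, and translating it back into $(c_1,c_2)$ I expect the MCM locus to be the hexagon with vertices $(2n+2,n+1)$, $(n+1,2n+2)$, $(-n-1,n+1)$ and their negatives, i.e.
\[
\{(c_1,c_2)\mid |c_1+c_2|\le 3(n+1),\ |2c_1-c_2|\le 3(n+1),\ |2c_2-c_1|\le 3(n+1)\},
\]
which is the shaded region of Figure~\ref{MCM_typeV} and strictly contains $\calC(P)$.

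The step I expect to be most delicate is the translation of the ``gap $\le n+1$'' covering condition into the six linear inequalities above, together with the verification that the two triangular ``ears'' adjoined to $\calC(P)$ contain no spurious boundary lattice points and omit none; here the off-by-one bookkeeping around the dead range $[-(n+1),-1]$ must be handled with care. As an independent check---and the route that stays entirely within the tools of this section---I would also run the chamber analysis of Proposition~\ref{criterion_MCM} as in Proposition~\ref{prop_MCM_typeI}: the three walls $\langle\lambda,(1,0)\rangle=0$, $\langle\lambda,(0,1)\rangle=0$, $\langle\lambda,(-1,-1)\rangle=0$ cut $B{\setminus}\{0\}\subset\rmY(G)_\RR$ into six sectors and six rays, and applying the non-MCM criterion to the pieces of type $\Lambda_\emptyset$ and $\Lambda_{\bullet\bullet}$ should carve out precisely the complementary forbidden half-planes. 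The three-fold symmetry permuting the weight directions reduces this to a single-sector computation, and the two approaches must agree on the region of Figure~\ref{MCM_typeV}.
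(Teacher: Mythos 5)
Your overall route is the same as the paper's: for type (V) the paper gives no self-contained argument but simply restricts Bruns's analysis of Segre products of polynomial rings (\cite[Section~2]{Bru}, see also \cite[Proof of Lemma~3.8]{HN}) to this poset, which is exactly your K\"unneth/local-cohomology computation. Your computation of $\calC(P)$ is correct, and so is the intermediate criterion: writing $M_\chi=M_{(a_1,a_2,a_3)}$ with $(a_1,a_2,a_3)=(c_1,c_2,0)$, the module is MCM if and only if the two consecutive gaps of the sorted triple are at most $n+1$ (equivalently, there is no twist $d$ and proper nonempty $S\subset\{1,2,3\}$ with $d+a_j\le-(n+2)$ for $j\in S$ and $d+a_j\ge0$ for $j\notin S$).

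The error is in the final translation of that gap condition into inequalities in $(c_1,c_2)$. The gap condition depends on the ordering of $c_1,c_2,0$ and therefore cuts out a \emph{non-convex} $12$-gon: the hexagon $\calC(P)$ with \emph{six} triangular ears attached (one on each edge), not two, exactly as drawn in Figure~\ref{MCM_typeV}. Your region $\{|c_1+c_2|\le 3(n+1),\ |2c_1-c_2|\le 3(n+1),\ |2c_2-c_1|\le 3(n+1)\}$ is the convex hull of that $12$-gon and is strictly too large. Concretely, for $n\ge1$ the character $\chi=(n+2,n+2)$ satisfies all three of your inequalities, yet $M_\chi$ is not MCM: the sorted triple $(0,n+2,n+2)$ has a gap $n+2>n+1$ (equivalently, at twist $d=-(n+2)$ the first two $\mathbb{P}^{n+1}$-factors carry global sections while the third contributes $H^{n+1}$, producing nonzero intermediate local cohomology). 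The correct sector-by-sector description is, e.g., on $c_1\ge c_2\ge0$: MCM iff $c_2\le n+1$ and $c_1-c_2\le n+1$, and its five images under permuting $c_1,c_2,0$. Your proposed cross-check via the chamber analysis of Proposition~\ref{criterion_MCM} (with weights $(1,0)$, $(0,1)$, $(-1,-1)$ each of multiplicity $n+2$) would indeed produce the $12$-gon and would have exposed the discrepancy; as written, however, the two halves of your argument do not agree, and the stated answer is incorrect.
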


\begin{figure}[H]
\begin{center}
\newcommand{\edgewidth}{0.06cm} 
\newcommand{\nodewidth}{0.06cm} 
\newcommand{\noderad}{0.15} 
\newcommand{\pmwidth}{0.65cm} 
\newcommand{\pmcolor}{lightgray} 

{\scalebox{0.35}{
\begin{tikzpicture}

\filldraw [line width=\edgewidth, fill=lightgray] (4,4)--(4,0)--(0,-4)--(-4,-4)--(-4,0)--(0,4)--(4,4) ; 
\draw[pattern=my north west lines, pattern color=blue] (4,8)--(4,4)--(8,4)--(4,0)--(4,-4)--(0,-4)--(-4,-8)--(-4,-4)--(-8,-4)--(-4,0)--(-4,4)--(0,4)--(4,8) ; 
\draw [line width=\edgewidth] (0,4)--(4,8)--(4,4) ; \draw [line width=\edgewidth] (4,4)--(8,4)--(4,0) ;
\draw [line width=\edgewidth] (0,-4)--(-4,-8)--(-4,-4) ; \draw [line width=\edgewidth] (-4,-4)--(-8,-4)--(-4,0) ;

\draw [line width=\edgewidth] (4,0)--(4,-4)--(0,-4) ; \draw [line width=\edgewidth] (-4,0)--(-4,4)--(0,4) ;
\draw[->, line width=0.085cm]  (-10,0)--(10,0); \draw[->, line width=0.085cm]  (0,-8)--(0,8); 

\node at (-1.6,4.5) {\huge $(0,n+1)$} ; \node at (1.8,-4.5) {\huge $(0,-n-1)$} ; 
\node at (5.6,-0.65) {\huge $(n+1,0)$} ; \node at (-5.8,0.65) {\huge $(-n-1,0)$} ; 
\node at (6.4,8) {\huge $(n+1,2n+2)$} ; \node at (10.4,4) {\huge $(2n+2,n+1)$} ; 
\node at (-7,-8) {\huge $(-n-1,-2n-2)$} ; \node at (-11,-4) {\huge $(-2n-2,-n-1)$} ; 

\draw [line width=\nodewidth, fill=black] (0,4) circle [radius=\noderad] ; \draw [line width=\nodewidth, fill=black] (0,-4) circle [radius=\noderad] ; 
\draw [line width=\nodewidth, fill=black] (4,0) circle [radius=\noderad] ; \draw [line width=\nodewidth, fill=black] (-4,0) circle [radius=\noderad] ; 
\draw [line width=\nodewidth, fill=black] (8,4) circle [radius=\noderad] ; \draw [line width=\nodewidth, fill=black] (4,8) circle [radius=\noderad] ; 
\draw [line width=\nodewidth, fill=black] (-8,-4) circle [radius=\noderad] ; \draw [line width=\nodewidth, fill=black] (-4,-8) circle [radius=\noderad] ; 
\end{tikzpicture}
} }
\end{center}
\caption{The region of rank one MCM modules for type (V)}
\label{MCM_typeV}
\end{figure}
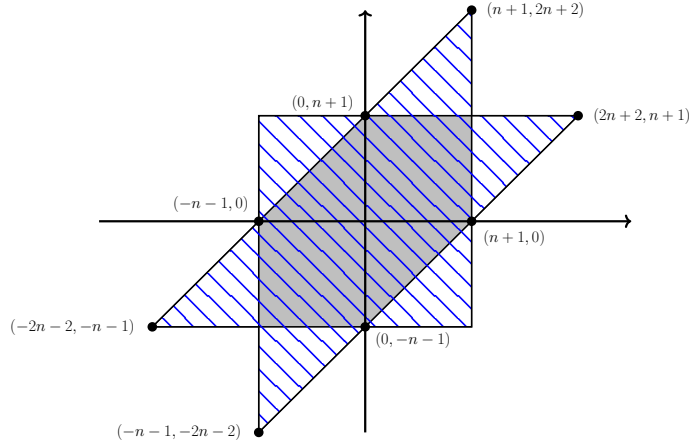

\appendix
\renewcommand{\thesection}{\Alph{section}}
\section{Remarks on splitting NCCRs for toric rings with class group $\ZZ$}
\label{sec_dimer}

In this section, we will discuss splitting NCCRs of a Gorenstein toric ring $R$ with class group $\ZZ$. 
In particular, we focus on the case of $\dim R=3$, in which case any splitting NCCR can be obtained from a consistent dimer model. 

\begin{settings}
\label{set_toric}
$R=\kk[\tau^\vee\cap\ZZ^d]$ is a $d$-dimensional Gorenstein toric ring with $\Cl(R)\cong\ZZ$.
(We remark that using (\ref{cl_seq}) we see that the number of rays of $\tau$ is $n=d+1$.) 
We denote by $T(a)$ a divisorial ideal given by an element $a\in\Cl(R)$. 
Let $\calD_1,\cdots,\calD_{d+1}$ be prime divisors corresponding to rays of $\tau$, and $\beta_i$ denotes the character corresponding to $\calD_i$, and we use the same notation $\beta_i$ for the weight of $\beta_i$. 
We may assume that $\beta_i\in\rmX(G)\cong\ZZ$ satisfies 
\[
\beta_1\le\beta_2\le\cdots\le\beta_s<0<\beta_{s+1}\le\beta_{s+2}\le\cdots\le\beta_{d+1}.
\]
Let $W\coloneqq \bigoplus_{i=1}^{d+1}V_{\beta_i}$. 
Then, $R$ is the ring of invariants $R=S^G$ under the action of $G=\Hom(\Cl(R),\kk^\times)$ on $S=\Sym W\cong\kk[x_1,\cdots,x_{d+1}]$ defined by $g\cdot x_i=\beta_i(g)x_i$ for any $g\in G$. 
Since $R$ is Gorenstein,  the top exterior $\bigwedge^{d+1}W$ is trivial.
Thus, we have that $-(\beta_1+\cdots+\beta_s)=\beta_{s+1}+\cdots+\beta_{d+1}$, and hence the representation $W$ is quasi-symmetric in particular. 

In the following, in order to avoid the triviality, we assume that there are at least two strictly positive and two strictly negative weights $\beta_i$ and the greatest common divisor of weights is one. 
We let $\beta\coloneqq-(\beta_1+\cdots+\beta_s)\in\ZZ_{>0}$. 
\end{settings}

\medskip

We note that a toric ring satisfying Settings~\ref{set_toric} actually admits a splitting NCCR as follows. 
(Since $R$ is quasi-symmetric, this also follows from \cite[Theorem~1.19]{SpVdB}.) 

\begin{theorem}[{see \cite[Theorem~8.9]{VdB2}}]
\label{NCCR_Z}
Let $R$ be a toric ring satisfying Settings~\ref{set_toric}. 
Then, the basic $R$-module $N\coloneqq\bigoplus_{a=0}^{\beta-1}T(a)$ gives a splitting NCCR of $R$. 
In particular, the number of direct summands in $N$ is $\beta$. 
\end{theorem}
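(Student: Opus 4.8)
The plan is to verify the two defining conditions of an NCCR via the Gorenstein criterion in Remark~\ref{rem_NCCR}(a): it suffices to show that $\Lambda\coloneqq\End_R(N)$ has finite global dimension and is an MCM $R$-module. Identifying $\Cl(R)\cong\rmX(G)\cong\ZZ$, each summand $T(a)$ of $N$ is a module of covariants $M_\chi$, and $N$ corresponds to $\beta$ consecutive characters; since twisting by a divisorial ideal replaces $N$ by a module with isomorphic endomorphism ring, I may assume $N=M_\calL\coloneqq\bigoplus_{\chi\in\calL}M_\chi$ with $\calL\coloneqq\{0,1,\dots,\beta-1\}\subset\rmX(G)$. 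By Observation~\ref{obs_GSmod} it then suffices to treat $\Lambda_\calL=\End_\calA(P_\calL)$, and by Lemma~\ref{key_lem1} the finiteness of $\gldim\Lambda_\calL$ reduces to $\pdim_{\Lambda_\calL}P_{\calL,\chi}<\infty$ for all $\chi\in\ZZ$ (those with $\chi\in\calL$ being already projective).

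\emph{Finite global dimension.} Here $\rmY(G)_\RR\cong\RR$, so only the two rays $\lambda=\pm1$ occur. For $\chi\ge\beta$ I take $\lambda=-1$: then $\chi$ is separated from $\calL$ in the sense of Definition~\ref{def_separated}, the weights $\beta_i$ with $\langle\lambda,\beta_i\rangle>0$ are exactly the negative ones $\beta_1,\dots,\beta_s$, and Lemma~\ref{key_lem2} resolves $P_{\calL,\chi}$ by an acyclic complex whose remaining terms are sums of $P_{\calL,\mu}$ with $\mu=\chi+\beta_{i_1}+\cdots+\beta_{i_p}$, $\emptyset\ne\{i_1,\dots,i_p\}\subseteq\{1,\dots,s\}$. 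Using $\beta_s\le-1$ and the identity $\beta_1+\cdots+\beta_s=-\beta$, every such $\mu$ satisfies $0\le\chi-\beta\le\mu\le\chi+\beta_s\le\chi-1$, so $\mu\in\{0,\dots,\chi-1\}$; an induction on $\chi\ge\beta$ (the base case $\chi=\beta$ producing only $\mu\in\calL$) then gives $\pdim_{\Lambda_\calL}P_{\calL,\chi}<\infty$. The case $\chi\le-1$ is symmetric with $\lambda=+1$, the positive weights $\beta_{s+1},\dots,\beta_{d+1}$, and $\beta_{s+1}+\cdots+\beta_{d+1}=\beta$, yielding $\chi+1\le\mu\le\chi+\beta\le\beta-1$ and an induction toward $\chi=-1$. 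Hence $\pdim_{\Lambda_\calL}P_{\calL,\chi}<\infty$ for every $\chi$, so $\gldim\Lambda_\calL<\infty$ and $\gldim\Lambda<\infty$.

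\emph{MCM property.} By Observation~\ref{obs_GSmod} one has $\Hom_R(M_\chi,M_{\chi'})\cong M_{\chi'-\chi}$, so as an $R$-module $\Lambda\cong\bigoplus_{\chi,\chi'\in\calL}M_{\chi'-\chi}$, where $\chi'-\chi$ runs over the integers $c$ with $|c|\le\beta-1$. It therefore suffices to check that each such $M_c$ is MCM, and since conic modules are MCM it is enough to see these classes are conic. By the strongly critical characterization in Subsection~\ref{sec_preliminary_toric}, $M_{-\chi}$ is conic precisely when $\chi=\sum_i a_i\beta_i$ with $a_i\in(-1,0]$; the image of the convex box $(-1,0]^n$ under $\bfa\mapsto\sum_i a_i\beta_i$ is an interval containing $0$ whose endpoints approach $\pm\beta$ (again by $\sum_{\beta_i<0}(-\beta_i)=\sum_{\beta_i>0}\beta_i=\beta$), hence covers every integer in $(-\beta,\beta)$. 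Thus all relevant classes are conic, $\Lambda$ is a finite direct sum of MCM modules, and so is itself MCM.

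Combining the two parts, Remark~\ref{rem_NCCR}(a) shows $\Lambda=\End_R(N)$ is an NCCR; it is splitting because $N$ is a direct sum of the rank one reflexive modules $T(a)$, which are pairwise non-isomorphic (distinct classes in $\Cl(R)\cong\ZZ$), so $N$ is basic with exactly $\beta$ summands. The main obstacle is the finite-global-dimension induction: one must confirm that the Koszul terms produced by Lemma~\ref{key_lem2} fall back into $\calL$ together with already-handled characters, and this closes precisely because the quasi-symmetry identity $\beta=\beta_{s+1}+\cdots+\beta_{d+1}=-(\beta_1+\cdots+\beta_s)$ places the extreme Koszul term exactly at the boundary of $\calL$; the one-dimensionality of $\rmY(G)_\RR$ keeps the remaining bookkeeping routine.
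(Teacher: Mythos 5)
Your proof is correct, but note that the paper does not actually prove this statement: it is quoted from Van den Bergh \cite[Theorem~8.9]{VdB2}, with the further remark that it also follows from the quasi-symmetric case of \v{S}penko--Van den Bergh since $\sum_i\beta_i=0$. What you have done is specialize the paper's own Section~\ref{sec_HibiZ2} machinery (Observation~\ref{obs_GSmod}, Lemmas~\ref{key_lem1} and~\ref{key_lem2}, and the conic criterion for the MCM property) to the rank-one class group, and this works cleanly: because $\rmY(G)_\RR$ is one-dimensional, the two rays $\lambda=\pm1$ separate every $\chi\notin\calL$ from $\calL$, the Koszul terms $\mu=\chi+\beta_{i_1}+\cdots+\beta_{i_p}$ land in $[\chi-\beta,\chi-1]$ (resp.\ $[\chi+1,\chi+\beta]$) by the Gorenstein identity $-(\beta_1+\cdots+\beta_s)=\beta_{s+1}+\cdots+\beta_{d+1}=\beta$, and the induction terminates exactly at the boundary of $\calL$. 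A pleasant by-product is that you obtain $\pdim_{\Lambda_\calL}P_{\calL,\chi}<\infty$ for \emph{every} $\chi\in\rmX(G)$ directly, so you do not need the additional reduction to the conic region via \cite[Subsection~10.3]{SpVdB} that the paper invokes in its $\ZZ^2$ argument. For the MCM half, your observation that every class $c$ with $|c|\le\beta-1$ is strongly critical (hence conic, hence MCM) is precisely the content of the paper's Lemma~\ref{MCM_conic}, re-derived rather than cited. The only cosmetic point is the sign convention $T(a)\cong M_{-a}$, under which $N$ corresponds to $\{-\beta+1,\dots,0\}$ rather than $\{0,\dots,\beta-1\}$; your twisting remark disposes of this, since the two sets differ by a translation and the endomorphism rings only depend on differences of characters.
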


In order to study splitting NCCRs further, we note the description of rank one MCM modules. 

\begin{lemma}
\label{MCM_conic}
Let $R$ be a toric ring satisfying Settings~\ref{set_toric}. For $a\in\Cl(R)$, we see that 
$T(a)$ is an MCM $R$-module if and only if $a\in[-\beta+1,\beta-1]\cap\ZZ$. Thus, we see that every rank one MCM module is conic. 
\end{lemma}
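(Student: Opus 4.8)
The plan is to prove the two implications of the equivalence separately and to read off the final sentence as a corollary. First I would pin down the conic range. By the strongly-critical description of conic classes recalled in Subsection~\ref{sec_preliminary_toric}, $T(a)$ is conic exactly when $a$ (up to sign) can be written as $\sum_{i=1}^{d+1}c_i\beta_i$ with all $c_i\in(-1,0]$. Since $W$ is quasi-symmetric, $\sum_{\beta_i>0}\beta_i=-\sum_{\beta_i<0}\beta_i=\beta$, and as the $c_i$ range over $(-1,0]$ the attainable real values of $\sum_ic_i\beta_i$ fill exactly the open interval $(-\beta,\beta)$, symmetric about the origin. Its integral points are $[-\beta+1,\beta-1]\cap\ZZ$, so these are precisely the conic classes.

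The ``if'' direction is then immediate: a conic module is MCM (Subsection~\ref{sec_preliminary_toric}), so $T(a)$ is MCM for every $a\in[-\beta+1,\beta-1]\cap\ZZ$. Alternatively this already follows from Theorem~\ref{NCCR_Z}, since the direct summands appearing in $\End_R(N)\cong\bigoplus_{i,j}T(j-i)$ are the $T(c)$ with $c\in[-\beta+1,\beta-1]$, and an NCCR is MCM. Granting the converse below, the finitely many conic classes exhaust the rank one MCM modules, which is exactly the final assertion that every rank one MCM module is conic.

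The crux is the converse: if $|a|\ge\beta$ then $T(a)$ is not MCM. Using $\Hom_R(T(a),R)\cong T(-a)$ and the preservation of the MCM property under $R$-duality, it suffices to treat $a\ge\beta$. For the boundary case $a=\beta$ I would argue cleanly through the NCCR: if $T(\beta)$ were MCM then, together with the already-established MCM-ness of $T(1),\dots,T(\beta-1)$, the module $\Hom_R(N,T(\beta))\cong\bigoplus_{i=0}^{\beta-1}T(\beta-i)$ would be MCM over $R$, hence an MCM $\Lambda$-module for $\Lambda=\End_R(N)$; as $\gldim\Lambda=\dim R$ it has finite projective dimension, and an Auslander--Buchsbaum count over the order $\Lambda$ forces $\pd_\Lambda\Hom_R(N,T(\beta))=0$, i.e. projectivity. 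Since $N$ contains $T(0)=R$ as a summand, $\Hom_R(N,-)$ restricts to an equivalence $\add_RN\xrightarrow{\sim}\proj\Lambda$, so this would give $T(\beta)\in\add_RN=\{T(0),\dots,T(\beta-1)\}$, a contradiction.

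The honest obstacle is to upgrade this to all $a\ge\beta$ uniformly. Here I expect to invoke Van den Bergh's Cohen--Macaulayness criterion for modules of covariants (the rank one specialization of Proposition~\ref{criterion_MCM}; see \cite{VdB1}), for which $\rmY(G)_\RR\cong\RR$ leaves only the two one-parameter subgroups $\lambda_\pm$, and then to exhibit a nonvanishing local cohomology class $H^i_{\fkm_R}(T(a))\neq0$ for some $i<\dim R$. The delicate point, and the place where care is genuinely needed, is that the $\beta_i$ are arbitrary integers, so the combinatorics controlling this nonvanishing is a numerical-semigroup computation rather than a naive count; arranging it so that the maximal Cohen--Macaulay range is exactly the interval $[-\beta+1,\beta-1]$, with no gaps, is where the real work lies.
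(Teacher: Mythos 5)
Your reduction of the conic computation to the interval $(-\beta,\beta)$, the ``if'' direction, and the duality reduction to $a\ge\beta$ are all fine, and your $\add$-level argument ruling out $a=\beta$ is correct. But the proposal does not actually prove the statement: for $a>\beta$ you only announce a strategy (Van den Bergh's criterion plus a nonvanishing local cohomology class) and then explicitly defer the computation, writing that arranging the MCM range to be exactly $[-\beta+1,\beta-1]$ ``is where the real work lies.'' That deferred computation \emph{is} the content of the lemma; without it you have only shown that the MCM range contains $[-\beta+1,\beta-1]$ and omits $\pm\beta$. Note also that your NCCR trick does not obviously propagate beyond the boundary case: for $a>\beta$ the module $\Hom_R(N,T(a))\cong\bigoplus_{i=0}^{\beta-1}T(a-i)$ contains summands $T(b)$ with $\beta\le b<a$, so you cannot conclude it is MCM from the hypothesis that $T(a)$ alone is MCM, and the desired contradiction does not come for free by induction either.

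The paper sidesteps all of this: the equivalence ``$T(a)$ is MCM if and only if $a\in[-\beta+1,\beta-1]\cap\ZZ$'' is quoted directly from \cite[Lemma~8.1]{VdB2} (which rests on the local cohomology computations of \cite{Sta,VdB1}), and the only thing actually argued in the paper is the identification of the conic range with that same interval via strongly critical characters, exactly as in your first paragraph. So either cite that lemma as the paper does, or carry out the local cohomology nonvanishing for all $|a|\ge\beta$; as written, the key implication is missing.
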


\begin{proof}
By \cite[Lemma~8.1]{VdB2} (see also \cite{Sta,VdB1}), we have that $T(a)$ is MCM if and only if $a\in[-\beta+1,\beta-1]\cap\ZZ$. 
On the other hand, $T(a)$ is conic if and only if $T(a)$ is isomorphic to a module of covariants $M_{-\chi}$ for a strongly critical character $\chi$, that is, $\chi=\sum_i\delta_i\beta_i$ in $\rmX(G)_\RR$ with $\delta_i\in(-1,0]$ for all $i$. 
By the assumptions in Setting~\ref{set_toric} such a character satisfies 
$$ -(\beta_{s+1}+\cdots +\beta_{d+1})=-\beta < \chi < \beta=-(\beta_1+\cdots +\beta_s),$$
thus we have the assertion. 
\end{proof}

We then give the precise description of splitting NCCRs as follows. 

\begin{proposition}
\label{NCCR_Z_any}
Let $R$ be a toric ring satisfying Settings~\ref{set_toric}, and $N=\bigoplus_{a=0}^{\beta-1}T(a)$ is the basic module given in Theorem~\ref{NCCR_Z}. 
Then, modules with the form $(N\otimes_RI)^{**}$ where $I$ is a divisorial ideal are precisely basic modules giving splitting NCCRs of $R$. 
\end{proposition}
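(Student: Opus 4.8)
The plan is to reduce the statement to a combinatorial classification of finite subsets $A \subset \ZZ \cong \Cl(R)$. For a finite set $A$ of pairwise distinct integers write $M_A \coloneqq \bigoplus_{a \in A} T(a)$, so that the basic splitting modules are exactly the $M_A$. I will show that $\End_R(M_A)$ is an NCCR if and only if $A$ is a block of $\beta$ consecutive integers, i.e. $A = \{b, b+1, \dots, b+\beta-1\}$ for some $b \in \ZZ$. Granting this, since every divisorial ideal is $I \cong T(b)$ and $(T(a) \otimes_R T(b))^{**} \cong T(a+b)$, we get $(N \otimes_R T(b))^{**} \cong \bigoplus_{a=0}^{\beta-1} T(a+b) = M_{\{b,\dots,b+\beta-1\}}$, so the modules $(N \otimes_R I)^{**}$ are precisely the $M_A$ with $A$ a block of $\beta$ consecutive integers, which is the assertion. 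The central tool is that $(-\otimes_R I)^{**}$ is an autoequivalence of the category of reflexive $R$-modules, under which $\Hom_R\big((T(a)\otimes I)^{**},(T(a')\otimes I)^{**}\big) \cong \Hom_R(T(a),T(a')) \cong T(a'-a)$; in particular, for any reflexive $M$ it induces an $R$-algebra isomorphism $\End_R\big((M \otimes_R I)^{**}\big) \cong \End_R(M)$.

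For the direction that each $(N \otimes_R I)^{**}$ gives a splitting NCCR, note first that $(N \otimes_R I)^{**}$ is a finite direct sum of rank one reflexive modules, hence splitting, and that its summands $T(a+b)$ are pairwise non-isomorphic because $T(c)\cong T(c')$ forces $c=c'$ in $\Cl(R)\cong\ZZ$, so it is basic. By the algebra isomorphism above (with $M=N$), $\End_R\big((N \otimes_R I)^{**}\big) \cong \End_R(N)$. Finiteness of global dimension is a ring-theoretic property, and being an MCM $R$-module is preserved because an $R$-algebra isomorphism is in particular an $R$-module isomorphism; since $\End_R(N)$ is an NCCR by Theorem~\ref{NCCR_Z} and $R$ is Gorenstein (so the relaxed criterion of Remark~\ref{rem_NCCR}(a) applies), so is $\End_R\big((N \otimes_R I)^{**}\big)$.

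For the converse, let $M_A$ be an arbitrary basic module giving a splitting NCCR. The twist-invariance just established (applied to $M=M_A$) shows that tensoring with $T(-\min A)$ preserves the property of being a basic splitting NCCR, so I may assume $\min A = 0$. The MCM condition now confines $A$ to an interval: since $\End_R(M_A) = \bigoplus_{a,a' \in A} \Hom_R(T(a),T(a')) \cong \bigoplus_{a,a'} T(a'-a)$ must be MCM and a finite direct sum is MCM iff each summand is, Lemma~\ref{MCM_conic} forces $a'-a \in [-\beta+1,\beta-1]$ for all $a,a' \in A$, i.e. $\max A - \min A \le \beta-1$, so $A \subseteq \{0,1,\dots,\beta-1\}$. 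The remaining point is that $|A| = \beta$; granting it, the interval $\{0,\dots,\beta-1\}$ contains exactly $\beta$ integers, so $A = \{0,\dots,\beta-1\}$ and hence $M_A = N$, which finishes the proof after undoing the normalization.

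The main obstacle is the claim $|A|=\beta$, that is, that the number of indecomposable summands of any basic module giving an NCCR equals that of $N$. I would deduce this from the invariance of the number of simple modules under derived equivalence together with the fact that any two NCCRs of $R$ are derived equivalent (they are all derived equivalent to $\rmD^\rmb(\coh Y)$ for a toric crepant resolution $Y$ of $\Spec R$, or to one another by the general theory of \cite{IW1}); for basic $M$ the number of indecomposable summands equals the number of simple $\End_R(M)$-modules, which is the rank of the Grothendieck group and hence a derived invariant, so it equals $\beta$ by Theorem~\ref{NCCR_Z}. Alternatively, and more in the spirit of Subsection~\ref{subsec_keylem}, one can argue directly: if $A \subsetneq \{0,\dots,\beta-1\}$, choosing $j \in \{0,\dots,\beta-1\}\setminus A$ and the one-parameter subgroups $\lambda = \pm 1 \in \rmY(G)$, the Koszul-type complexes $C_{\calL,\lambda,\chi}$ of Lemma~\ref{key_lem2} fail to resolve $P_{\calL,\chi}$ for the character $\chi$ associated to $j$, forcing $\pdim_{\Lambda_\calL} P_{\calL,\chi} = \infty$ and hence $\gldim \Lambda_\calL = \infty$ by Lemma~\ref{key_lem1}; carrying out this bookkeeping for the two rays of $\rmY(G)$ is the technical heart of the converse.
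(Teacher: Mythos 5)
Your overall architecture matches the paper's: the forward direction via the autoequivalence $(-\otimes_R I)^{**}$ and the isomorphism $\End_R\bigl((N\otimes_R I)^{**}\bigr)\cong\End_R(N)$ is exactly what the paper does (citing the proof of Lemma~6.1 of \cite{Nak1}), and your use of Lemma~\ref{MCM_conic} to confine $A$ to an interval of length at most $\beta-1$ is also the paper's first step. The genuine gap is in the step you yourself flag as the main obstacle, namely $|A|=\beta$. Your primary route invokes the derived equivalence of all NCCRs of $R$, either through a toric crepant resolution $Y$ (which need not exist for a Gorenstein toric ring of dimension $d>3$) or through ``the general theory of \cite{IW1}'' (whose statement that all NCCRs are derived equivalent is a theorem only in dimension $3$ and is open in general). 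Settings~\ref{set_toric} allows arbitrary $d$, so this input is not available. Your alternative route is also not a proof as stated: the failure of the particular Koszul-type complexes $C_{\calL,\lambda,\chi}$ of Lemma~\ref{key_lem2} to be projective resolutions does not imply $\pdim_{\Lambda_\calL}P_{\calL,\chi}=\infty$; by Lemma~\ref{key_lem1} you would need to rule out \emph{every} finite projective resolution, not just the Koszul ones.

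The paper closes this gap with a different and much shorter ingredient: the maximality of modules giving NCCRs, \cite[Proposition~4.5]{IW1}, which says that if $\End_R(M)$ is an NCCR and $\End_R(M\oplus N)$ is MCM, then $N\in\add_R M$. With your normalization $A\subseteq\{0,\dots,\beta-1\}$, if some $b\in\{0,\dots,\beta-1\}$ were missing from $A$, then every $\Hom_R(T(a),T(b))\cong T(b-a)$ and $\Hom_R(T(b),T(a))\cong T(a-b)$ with $a\in A$ would still be MCM by Lemma~\ref{MCM_conic}, since all these differences lie in $[-\beta+1,\beta-1]$; hence $\End_R(M_A\oplus T(b))$ would be MCM with $T(b)\notin\add_R M_A$, a contradiction. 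This forces $A=\{0,\dots,\beta-1\}$ in one stroke, simultaneously giving consecutiveness and $|A|=\beta$, and is the ingredient your argument is missing.
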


\begin{proof}
Let $M\coloneqq T(a_1)\oplus T(a_2)\oplus\cdots\oplus T(a_r)$ be a basic module giving a splitting NCCR of $R$, where $a_1, \cdots, a_r\in\ZZ$ for some $r\in\ZZ_{>0}$, and we may assume that $a_1<a_2\cdots<a_r$. 
By the maximality of modules giving an NCCR (see \cite[Proposition~4.5]{IW1}) and the description of rank one MCM modules shown in Lemma~\ref{MCM_conic}, we easily see that $a_{i+1}=a_1+i$ for all $i=0,\cdots,r-1$ and $r=\beta$. 
Thus, we have that $M\cong(N\otimes_RT(a_1))^{**}$. 
On the other hand, since $\End_R((N\otimes_RI)^{**})\cong\End_R(N)$ holds (see e.g., \cite[the proof of Lemma~6.1]{Nak1}), this actually gives a splitting NCCR by Theorem~\ref{NCCR_Z}. 
\end{proof}

We then turn our attention to $3$-dimensional Gorenstein toric rings. 
In this case, splitting NCCRs are obtained from dimer models satisfying the consistency condition (see \cite{Bro,IU}). 
A \emph{dimer model} is a polygonal cell decomposition of the real two-torus whose nodes and edges form a finite bipartite graph. 
Since a dimer model is a bipartite graph, we color each node either black or white, and each edge connects a black node to a white node. 
We say that a dimer model is \emph{reduced} if it does not contain a node whose valency is two. 
If there is a node whose valency is two, we can remove it as shown in \cite[Figure~5]{IU}, and make a dimer model reduced. 
We say that two reduced dimer models are \emph{isomorphic} if their cell decompositions of the real two-torus are homotopy equivalent. 

On the other hand, we can obtain the quiver with potential $(Q_{\Gamma},W_{\Gamma})$ as the dual of a dimer model $\Gamma$. 
Namely, for a dimer model $\Gamma$, we assign a vertex dual to each face in $\Gamma$, an arrow dual to each edge in $\Gamma$. 
Note that the orientation of arrows is given so that the white node is on the right of the arrow. 
The potential $W_{\Gamma}$ is a certain linear combination of cycles surrounding nodes on $\Gamma$. 
For example, Figure~\ref{ex_dimer} is a dimer model and the associated quiver.  

\begin{figure}[H]
\begin{center}
\begin{tikzpicture}
\node (DM) at (0,0) 
{\scalebox{0.6}{
\begin{tikzpicture}
\node (B1) at (1,1){$$}; \node (B2) at (3,1){$$}; \node (W1) at (0,2){$$}; \node (W2) at (2,2){$$};
\node (B3) at (0,3){$$}; \node (B4) at (2,3){$$}; \node (W3) at (1,4){$$}; \node (W4) at (3,4){$$};

\draw[line width=0.035cm]  (-0.5,0.5) rectangle (3.5,4.5);

\draw[line width=0.05cm]  (B1)--(W2)--(B4)--(W3)--(B3)--(W1)--(B1);
\draw[line width=0.05cm]  (W2)--(B2)--(3.5,1.5);\draw[line width=0.05cm] (W2)--(B3);
\draw[line width=0.05cm] (-0.5,1.5)--(W1);\draw[line width=0.05cm] (B4)--(W4)--(3.5,3.5); \draw[line width=0.05cm] (B3)--(-0.5,3.5);
\draw[line width=0.05cm] (B1)--(1,0.5); \draw[line width=0.05cm] (B2)--(3,0.5); 
\draw[line width=0.05cm] (W3)--(1,4.5); \draw[line width=0.05cm] (W4)--(3,4.5); 

\filldraw  [ultra thick, fill=black] (1,1) circle [radius=0.17] ;\filldraw  [ultra thick, fill=black] (3,1) circle [radius=0.17] ;
\filldraw  [ultra thick, fill=black] (0,3) circle [radius=0.17] ;\filldraw  [ultra thick, fill=black] (2,3) circle [radius=0.17] ;
\draw  [ultra thick,fill=white] (0,2) circle [radius=0.18] ;\draw  [ultra thick, fill=white] (2,2) circle [radius=0.18] ;
\draw  [ultra thick, fill=white] (1,4) circle [radius=0.18] ;\draw  [ultra thick, fill=white] (3,4) circle [radius=0.18] ;
\end{tikzpicture}
} }; 

\node (QV) at (5,0) 
{\scalebox{0.6}{
\begin{tikzpicture}[sarrow/.style={black, -latex, very thick}]
\node (B1) at (1,1){$$}; \node (B2) at (3,1){$$}; \node (W1) at (0,2){$$}; \node (W2) at (2,2){$$};
\node (B3) at (0,3){$$}; \node (B4) at (2,3){$$}; \node (W3) at (1,4){$$}; \node (W4) at (3,4){$$};

\node (Q0) at (1,2){$0$}; \node (Q1) at (1,3){$1$}; \node (Q2) at (3.5,2.4){$2$}; \node (Q2a) at (-0.5,2.4){$2$};
\node (Q3a) at (-0.5,0.5){$3$}; \node (Q3b) at (-0.5,4.5){$3$};  \node (Q3c) at (3.5,0.5){$3$};  \node (Q3d) at (3.5,4.5){$3$}; 
\node (Q4a) at (2,0.5){$4$}; \node (Q4b) at (2,4.5){$4$};


\draw[lightgray, line width=0.05cm]  (B1)--(W2)--(B4)--(W3)--(B3)--(W1)--(B1);
\draw[lightgray, line width=0.05cm]  (W2)--(B2)--(3.5,1.5);\draw[lightgray, line width=0.05cm] (W2)--(B3);
\draw[lightgray, line width=0.05cm] (-0.5,1.5)--(W1);\draw[lightgray, line width=0.05cm] (B4)--(W4)--(3.5,3.5); \draw[lightgray, line width=0.05cm] (B3)--(-0.5,3.5);
\draw[lightgray, line width=0.05cm] (B1)--(1,0.5); \draw[lightgray, line width=0.05cm] (B2)--(3,0.5); 
\draw[lightgray, line width=0.05cm] (W3)--(1,4.5); \draw[lightgray, line width=0.05cm] (W4)--(3,4.5); 

\filldraw  [ultra thick, draw=lightgray, fill=lightgray] (1,1) circle [radius=0.17] ;\filldraw  [ultra thick, draw=lightgray, fill=lightgray] (3,1) circle [radius=0.17] ;
\filldraw  [ultra thick,draw=lightgray, fill=lightgray] (0,3) circle [radius=0.17] ;\filldraw  [ultra thick,draw=lightgray, fill=lightgray] (2,3) circle [radius=0.17] ;
\draw  [ultra thick, draw=lightgray,fill=white] (0,2) circle [radius=0.18] ;\draw  [ultra thick, draw=lightgray, fill=white] (2,2) circle [radius=0.18] ;
\draw  [ultra thick, draw=lightgray, fill=white] (1,4) circle [radius=0.18] ;\draw  [ultra thick, draw=lightgray, fill=white] (3,4) circle [radius=0.18] ;

\draw[sarrow, line width=0.06cm] (Q0)--(Q1); \draw[sarrow, line width=0.06cm] (Q1)--(Q2);\draw[sarrow, line width=0.06cm] (Q2)--(Q4a);
\draw[sarrow, line width=0.06cm] (Q4a)--(Q0); \draw[sarrow, line width=0.06cm] (Q0)--(Q3a);\draw[sarrow, line width=0.06cm] (Q1)--(Q3b);
\draw[sarrow, line width=0.06cm] (Q4b)--(Q1); \draw[sarrow, line width=0.06cm] (Q3a)--(Q4a); \draw[sarrow, line width=0.06cm] (Q3b)--(Q4b);
\draw[sarrow, line width=0.06cm] (Q2)--(Q4b);\draw[sarrow, line width=0.06cm] (Q4a)--(Q3c);\draw[sarrow, line width=0.06cm] (Q4b)--(Q3d);
\draw[sarrow, line width=0.06cm] (Q2a)--(Q0);\draw[sarrow, line width=0.06cm] (Q3d)--(Q2);
\draw[sarrow, line width=0.06cm] (Q3b)--(Q2a);\draw[sarrow, line width=0.06cm] (Q3c)--(Q2); \draw[sarrow, line width=0.06cm] (Q3a)--(Q2a);
\end{tikzpicture}
} }; 
\end{tikzpicture}
\caption{An example of a dimer model and the associated quiver}
\label{ex_dimer}
\end{center}
\end{figure}
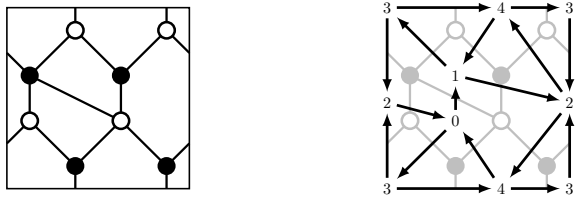

Using the quiver with potential $(Q_{\Gamma},W_{\Gamma})$, we define the Jacobian algebra $\calP(Q_{\Gamma},W_{\Gamma})$, 
which is the path algebra of $Q_{\Gamma}$ with relations coming from $W_{\Gamma}$. 
If $\Gamma$ satisfies the ``consistency condition",  then the center $R\coloneqq\mathrm{Z}\left(\calP(Q_{\Gamma},W_{\Gamma})\right)$ is a $3$-dimensional  Gorenstein toric ring (see \cite[Lemma~5.6]{Bro}). 
Furthermore, there exists a reflexive module $M$ such that $\calP(Q_{\Gamma},W_{\Gamma})\cong\End_R(M)$ and this is a splitting NCCR of $R$ \cite{Bro,IU}. 
We remark that for any $3$-dimensional Gorenstein toric ring, there is a consistent dimer model giving $R$ as the center of the Jacobian algebra 
(see \cite{Gul,IU}), and we call it a consistent dimer model associated with $R$. However, such a consistent dimer model is not unique in general. 

On the other hand, splitting NCCRs always come from consistent dimer models. 
In particular, there is an algorithm to construct a consistent dimer model from a reflexive module giving a splitting NCCR (see \cite{Boc2,CQV}). 
Also, it is known that the number of rank one reflexive modules appearing in basic modules giving splitting NCCRs of $R$ is all the same, and it coincides with the number of faces on consistent dimer models associated with $R$. 
Thus, by combining Proposition~\ref{NCCR_Z_any} we especially have the following. 

\begin{corollary}
\label{cor_splitting_dimer}
Let $R$ be a toric ring satisfying Setting~\ref{set_toric} with $d=3$. 
We denote the number of faces on some (and hence any) consistent dimer model associated with $R$ by $r$. 
Then, any module giving a splitting NCCR of $R$ takes the form $\bigoplus_{i=0}^{r-1}T(a+i)$ with $a\in\ZZ$. 
\end{corollary}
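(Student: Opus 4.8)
The plan is to read off the statement from Proposition~\ref{NCCR_Z_any} together with the dimer-theoretic count of $r$ recalled above; the only substantive point is the identity $r=\beta$. First I would let $M$ be any module giving a splitting NCCR of $R$ and pass to its basic form, i.e.\ the direct sum of its pairwise non-isomorphic indecomposable summands, which has a Morita equivalent endomorphism ring and hence again gives a splitting NCCR. By Proposition~\ref{NCCR_Z_any} (more precisely, by the argument in its proof, which uses the maximality of NCCR-giving modules and the list of rank one MCM modules in Lemma~\ref{MCM_conic}), this basic module is forced to be an interval of consecutive divisorial ideals
\[
\bigoplus_{i=0}^{\beta-1} T(a+i)
\]
for some $a\in\ZZ$, and in particular it has exactly $\beta$ indecomposable summands.

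Next I would identify $\beta$ with the face count $r$. For a consistent dimer model $\Gamma$ associated with $R$ one has $\calP(Q_\Gamma,W_\Gamma)\cong\End_R(M)$ for some $M$ giving a splitting NCCR, where the vertices of $Q_\Gamma$ are dual to the faces of $\Gamma$; thus the number of faces of $\Gamma$ equals the number of vertices of $Q_\Gamma$, which equals the number of indecomposable summands of $M$. By the invariance recalled before the statement, this count is independent of the chosen consistent dimer model, so it is the well-defined integer $r$. Combining this with the previous paragraph and Theorem~\ref{NCCR_Z}, the number of summands is simultaneously $\beta$ and $r$, whence $r=\beta$. Substituting into the display yields that every module giving a splitting NCCR has the form $\bigoplus_{i=0}^{r-1} T(a+i)$ with $a\in\ZZ$, as claimed.

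The step requiring the most care is the equality $r=\beta$: it rests on the correspondence between consistent dimer models and splitting NCCRs (passing from a dimer model to its dual quiver with potential and then to the Jacobian algebra $\calP(Q_\Gamma,W_\Gamma)$) and on the fact that all consistent dimer models associated with $R$ have the same number of faces. Everything else is a direct appeal to Proposition~\ref{NCCR_Z_any} and a formal manipulation of divisorial ideals in $\Cl(R)\cong\ZZ$, where tensoring followed by taking the reflexive hull realizes the group law, so that $(N\otimes_R T(a))^{**}\cong\bigoplus_{i=0}^{\beta-1}T(a+i)$.
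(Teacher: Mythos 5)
Your proposal is correct and follows essentially the same route as the paper: the paper likewise deduces the corollary by combining Proposition~\ref{NCCR_Z_any} (every basic module giving a splitting NCCR is an interval $\bigoplus_{i=0}^{\beta-1}T(a+i)$) with the standard fact that the number of summands of any basic module giving a splitting NCCR equals the number of faces of a consistent dimer model associated with $R$, forcing $r=\beta$. Your explicit reduction to the basic case via Morita equivalence is a reasonable way to make precise a step the paper leaves implicit.
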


As we mentioned, a consistent dimer model associated with $R$ is not unique. 
However, in our situation the endomorphism ring $\End_R(\bigoplus_{i=0}^{r-1}T(a+i))$ is isomorphic for all $a\in\ZZ$. 
Thus, using the method given in \cite[subsection~5.3]{CQV}, we see that all consistent dimer models reconstructed from modules giving splitting NCCRs are isomorphic. 

\begin{proposition}
\label{unique_dimer}
Let $R$ be a toric ring satisfying Setting~\ref{set_toric} with $d=3$. 
Then, a consistent dimer model associated with $R$ is unique up to isomorphism. 
\end{proposition}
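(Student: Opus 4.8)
The plan is to reduce the uniqueness of the dimer model to the observation recorded just before the statement, namely that all modules giving splitting NCCRs of $R$ have the same endomorphism ring up to isomorphism. Recall from Corollary~\ref{cor_splitting_dimer} that every basic module giving a splitting NCCR of $R$ has the form $M_a\coloneqq\bigoplus_{i=0}^{r-1}T(a+i)$ for some $a\in\ZZ$, where $r$ is the number of faces of any consistent dimer model associated with $R$; here $r=\beta$ and $M_0=N$ in the notation of Theorem~\ref{NCCR_Z}. The first step is to check that these modules all share one endomorphism ring. Indeed, Proposition~\ref{NCCR_Z_any} identifies $M_a$ with $(N\otimes_R T(a))^{**}$, and in its proof one has $\End_R((N\otimes_R I)^{**})\cong\End_R(N)$ for any divisorial ideal $I$ (see \cite[the proof of Lemma~6.1]{Nak1}); hence $\End_R(M_a)\cong\End_R(N)$ for every $a\in\ZZ$.

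The second step is to recall how a consistent dimer model is recovered from a splitting NCCR. By the discussion above (following \cite{Bro,IU}), any consistent dimer model $\Gamma$ associated with $R$ yields a splitting NCCR $\calP(Q_\Gamma,W_\Gamma)\cong\End_R(M)$ for some module $M$ giving a splitting NCCR, and conversely, by the algorithm of \cite{Boc2,CQV} one reconstructs a consistent dimer model from any such module. The crucial input, supplied by \cite[subsection~5.3]{CQV}, is that this reconstruction recovers $\Gamma$ up to isomorphism and depends only on the isomorphism class of the NCCR algebra $\End_R(M)$: isomorphic endomorphism rings produce isomorphic quivers with potential, hence isomorphic cell decompositions of the real two-torus.

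Combining these steps gives the result. Let $\Gamma$ and $\Gamma'$ be two consistent dimer models associated with $R$. Each is the dual of a module giving a splitting NCCR, so by Corollary~\ref{cor_splitting_dimer} there are $a,a'\in\ZZ$ with $\calP(Q_\Gamma,W_\Gamma)\cong\End_R(M_a)$ and $\calP(Q_{\Gamma'},W_{\Gamma'})\cong\End_R(M_{a'})$. The reconstruction of \cite[subsection~5.3]{CQV} recovers $\Gamma$ (respectively $\Gamma'$) up to isomorphism from $\End_R(M_a)$ (respectively $\End_R(M_{a'})$). Since $\End_R(M_a)\cong\End_R(M_{a'})$ by the first step, these reconstructions coincide up to isomorphism, and therefore $\Gamma\cong\Gamma'$.

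I expect the main obstacle to lie in the second step: one must be certain that the procedure of \cite{Boc2,CQV} genuinely factors through the isomorphism class of $\End_R(M)$. The quiver $Q_\Gamma$ is the Gabriel quiver of the algebra and hence already an isomorphism invariant, but one must further check that the potential $W_\Gamma$, and thus the homotopy class of the cell decomposition, is likewise determined by the algebra up to isomorphism. This is exactly the content of \cite[subsection~5.3]{CQV}; once it is granted, the passage from the common endomorphism ring in the first step to a common dimer model is immediate.
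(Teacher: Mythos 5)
Your proposal is correct and follows essentially the same route as the paper: the paper's own justification is precisely that all modules $\bigoplus_{i=0}^{r-1}T(a+i)$ giving splitting NCCRs have isomorphic endomorphism rings, so the reconstruction procedure of \cite[subsection~5.3]{CQV} produces isomorphic dimer models. Your additional remark about needing the reconstruction to factor through the isomorphism class of $\End_R(M)$ correctly identifies the one external input, which the paper likewise delegates to \cite{CQV}.
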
 

We then consider the relationships of modules giving splitting NCCRs using the mutations. 
Here, we recall the definition of the mutation. 
Let $M=\bigoplus_{i\in I}M_i$ be a basic reflexive $R$-module giving an NCCR where $I=\{1, \cdots, r\}$. 
For each $i\in I$, we set $M_{I{\setminus}\{i\}}=\bigoplus_{j\in I{\setminus}\{i\}}M_j$. 
We say that a morphism $\varphi:N\rightarrow M_i$ with $N\in\add_R M_{I{\setminus}\{i\}}$ is a \emph{right $(\add_R M_{I{\setminus}\{i\}})$-approximation} of $M_i$ if $\Hom_R(M_{I{\setminus}\{i\}}, \varphi)$ is surjective. 
Also, we say that $\varphi$ is \emph{minimal} if there does not exist a non-zero direct summand of $N$ that is mapped to zero via $\varphi$. 
By \cite[the proof of Proposition~4.2]{AS}, we have that $\add_R M_{I{\setminus}\{i\}}$ is contravariantly finite, thus a minimal right $(\add_R M_{I{\setminus}\{i\}})$-approximation $\varphi$ exists and is unique up to isomorphism. 
We then define the \emph{right mutation} $\mu^+_i$ of $M$ at $i\in I$ as $\mu^+_i(M)\coloneqq M_{I{\setminus}\{i\}}\oplus \Ker\varphi.$ 
Also, we define the \emph{left mutation} $\mu^-_i$ of $M$ at $i\in I$ as $\mu^-_i(M)\coloneqq (\mu^+_i(M^*))^*$, and 
$\mu^+_i(M)=\mu^-_i(M)$ holds when $\dim R=3$ (see \cite[Section~6]{IW1}). Thus, we will simply denote this by $\mu_i(M)$, and call the \emph{mutation} of $M$ at $i\in I$ (or at $M_i$). 
Furthermore, by the results in \cite[Section~6]{IW1}, we have that $\mu_i\mu_i(M)=M$ and $\mu_i(M)$ also gives an NCCR. 
We remark that even if $\End_R(M)$ is a splitting NCCR, $\End_R(\mu_i(M))$ is not splitting in general, this is just an NCCR. 
Repeating the mutations, we obtain many modules giving NCCRs. In general, NCCRs are infinitely many families even if we only consider splitting ones. 
However, the number of generators giving splitting NCCRs is finite up to isomorphism, because if $M$ is such a generator then $M$ is MCM by the definition of NCCR, and the number of rank one MCM modules is finite \cite[Corollary~5.2]{BG1}. 
Under these backgrounds, we introduce the notion of the \emph{exchange graph} of this mutation. 

\begin{definition}
The \emph{exchange graph} $\mathsf{EG}(R)$ (resp. $\mathsf{EG}_0(R)$) of the mutations of modules (resp. generators) giving splitting NCCRs is a graph whose 
vertices are modules (resp. generators) giving splitting NCCRs, and we draw an edge between modules (resp. generators) $M$ and  $M^\prime$ 
giving splitting NCCRs if $M^\prime$ is described as $M^\prime=\mu_i(M)$ for some $i\in I$. 
\end{definition}

In \cite{Nak1}, the author showed that if $R$ is a $3$-dimensional Gorenstein toric ring defined by a reflexive polygon, then $\mathsf{EG}(R)$ is connected. 
However, we do not know whether $\mathsf{EG}(R)$ is connected or not in general (see \cite[Question~6.4]{Nak1}). 
In our situation, we can obtain the next theorem, which gives a partial affirmative answer to this problem. 

\begin{theorem}
\label{ex_graph_dimer}
Let $R$ be a toric ring satisfying Setting~\ref{set_toric} with $d=3$. 
Let $M(a)=\bigoplus_{i=0}^{r-1}T(-a+i)$ be a module giving a splitting NCCR of $R$ where $a\in\ZZ$ (see Corollary~\ref{cor_splitting_dimer}). 
Then, the exchange graph $\mathsf{EG}(R)$ is connected. 
In particular, the exchange graph $\mathsf{EG}_0(R)$ takes the following form. 
\begin{center}
{\scalebox{1}{
\begin{tikzpicture}
\node (Ar) at (8.7,0) {$M(0)$}; \node (Ar1) at (6.7,0) {$M(1)$};
\node (A0) at (-0.3,0) {$M(r-1)$}; \node (A1) at (2,0) {$M(r-2)$};

\draw[thick] (5.2,0)--(Ar1)--(Ar);
\draw[thick] (A0)--(A1)--(3.8,0) ;
\draw[thick,dotted] (4,0)--(5,0);
\end{tikzpicture} }}
\end{center}
\end{theorem}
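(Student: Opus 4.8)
The plan is to combine the classification of splitting NCCRs in Corollary~\ref{cor_splitting_dimer} with an explicit computation of the mutation at the two extremal summands, using a short combinatorial observation to dispose of all remaining mutations. First I would record the bookkeeping. By Corollary~\ref{cor_splitting_dimer} the vertices of $\mathsf{EG}(R)$ are exactly the modules $M(a)=\bigoplus_{i=0}^{r-1}T(-a+i)$, $a\in\ZZ$. Since $R=T(0)$, the module $M(a)$ is a generator precisely when $0\in\{-a,\dots,-a+r-1\}$, i.e. when $0\le a\le r-1$, so the vertices of $\mathsf{EG}_0(R)$ are exactly $M(0),\dots,M(r-1)$. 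I would also note that $(-\otimes_R T(1))^{**}$ sends $M(a)$ to $M(a-1)$ and is compatible with mutation (it preserves $\End_R(-)$, cf. Proposition~\ref{NCCR_Z_any}), so it suffices to analyse the mutations of $M(0)=\bigoplus_{i=0}^{r-1}T(i)$ and transport the conclusions by this $\ZZ$-action.

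The combinatorial heart is the following. By definition $\mu_i$ alters only the chosen summand, so $\mu_i(M(a))$ shares $r-1$ of its indecomposable summands with $M(a)$; by the mutation theory of \cite{IW1} it again gives an NCCR, which by Corollary~\ref{cor_splitting_dimer} is either some $M(b)$ or non-splitting. If it equals $M(b)$, then the two length-$r$ windows $\{-a,\dots,-a+r-1\}$ and $\{-b,\dots,-b+r-1\}$ of consecutive integers share $r-1$ elements; an elementary window-overlap argument forces $b=a\pm1$ and forces the differing summand to be an extremal one. In particular, mutation at any non-extremal summand is automatically non-splitting, since otherwise it would reproduce $M(a)$ itself, i.e. be a trivial mutation, which is excluded because $M(a)$ is basic. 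This reduces the whole analysis to the two extremal mutations.

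I would then compute the extremal mutation via an exchange sequence. Writing the four weights as $\beta_1\le\beta_2<0<\beta_3\le\beta_4$ with $r=\beta=-(\beta_1+\beta_2)$ (Settings~\ref{set_toric}), multiplication by the negative-weight variables $x_1,x_2$ yields maps whose sources both lie in $\add_R\bigl(\bigoplus_{i=0}^{r-2}T(i)\bigr)$ and whose kernel is $T(-1)$, realised by $h\mapsto(x_2h,-x_1h)$, giving
\[
0\longrightarrow T(-1)\longrightarrow T(-\beta_2-1)\oplus T(-\beta_1-1)\xrightarrow{\,(x_1,x_2)\,} T(r-1).
\]
The main work is to check that $\varphi=(x_1,x_2)$ is the \emph{minimal right} $\add_R\bigl(\bigoplus_{i=0}^{r-2}T(i)\bigr)$-approximation of $T(r-1)$: any homomorphism $T(j)\to T(r-1)$ with $j\le r-2$ lies in $\Hom_R(T(j),T(r-1))\cong T(r-1-j)$ with $r-1-j\ge1$, and such a module of covariants is generated by monomials that strictly raise the class, each of which is divisible by $x_1$ or $x_2$ and hence factors through $\varphi$; minimality is clear since neither component vanishes. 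Granting this, $\mu_{\mathrm{top}}(M(0))=\bigoplus_{i=0}^{r-2}T(i)\oplus T(-1)=M(1)$, and the symmetric computation with $x_3,x_4$ gives $\mu_{\mathrm{bottom}}(M(0))=M(-1)$. Transporting by the $T(1)$-action, $\mu_{\mathrm{top}}(M(a))=M(a+1)$ and $\mu_{\mathrm{bottom}}(M(a))=M(a-1)$ for every $a$.

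Finally I would assemble the graphs. The preceding steps show that the edges of $\mathsf{EG}(R)$ are exactly the pairs $\{M(a),M(a+1)\}$, $a\in\ZZ$, so $\mathsf{EG}(R)$ is the bi-infinite path and in particular connected. Restricting to the generators $M(0),\dots,M(r-1)$, the surviving edges are $\{M(a),M(a+1)\}$ with $0\le a\le r-2$; the two mutations leaving the generator locus, namely $\mu_{\mathrm{bottom}}(M(0))=M(-1)$ and $\mu_{\mathrm{top}}(M(r-1))=M(r)$, produce non-generators and hence no edge in $\mathsf{EG}_0(R)$. Thus $\mathsf{EG}_0(R)$ is precisely the path $M(r-1)-M(r-2)-\cdots-M(1)-M(0)$ displayed in the statement. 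I expect the approximation/minimality verification in the exchange-sequence step to be the only genuinely technical point; everything else is combinatorial bookkeeping built on Corollary~\ref{cor_splitting_dimer} and Lemma~\ref{MCM_conic}.
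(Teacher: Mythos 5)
Your proposal is correct, and its computational core coincides with the paper's: the exchange sequence you write down, $0\to T(-1)\to T(-\beta_2-1)\oplus T(-\beta_1-1)\xrightarrow{(x_1,x_2)}T(r-1)$, is exactly the Koszul complex on the two negative-weight variables that the paper obtains as $C_{\lambda,a}$, and in both cases the mutation at an extremal summand replaces it by the term $T(\mp1)$ sitting at the far end of the window, yielding the edge $\{M(a),M(a\pm1)\}$. The differences are in the supporting arguments. The paper gets both the exactness and the right-approximation property for free from Lemma~\ref{key_lem2} (the character $a$ is separated from $\calL_a\setminus\{a\}$ by $\lambda<0$), whereas you verify the approximation property by hand via the observation that every monomial of negative weight is divisible by $x_1$ or $x_2$; your argument is more elementary and self-contained, at the cost of also having to check minimality (which you do, and which is genuinely needed since the paper defines mutation via the \emph{minimal} approximation). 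You also supply the window-overlap argument showing that no mutation at a non-extremal summand can produce a \emph{different} splitting module, which the paper leaves implicit when it asserts the exact shape of $\mathsf{EG}_0(R)$; this is a worthwhile addition. One small slip: your claim that the case $\mu_i(M(a))\cong M(a)$ is ``excluded because $M(a)$ is basic'' is not a valid argument --- basicness does not prevent $\Ker\varphi\cong M_i$, and self-mutations do occur in general. Fortunately this does not matter: if a non-extremal mutation is splitting, your window argument forces it to reproduce $M(a)$ itself, which contributes no edge between distinct vertices, so the edge set and both conclusions of the theorem are unaffected.
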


\begin{proof}
First, we recall that $T(-a)$ is isomorphic to the module of covariants $M_\chi=(S\otimes_\kk V_\chi)^G$ associated with the character $\chi$ of weight $a$. 
(By the abuse of notation, we will write $\chi=a$.) 
Thus, we may write $M(a)=\bigoplus_{i=0}^{r-1}M_{a-i}$. Let $\calL_a=\{a,a-1,\cdots,a-r+1\}$. 
We see that $\chi=a$ is separated from $\calL_a{\setminus}\{a\}$ by $\lambda\in\rmY(G)_\RR$ with $\lambda<0$. 
Since $\dim R=3$ and $\Cl(R)\cong\ZZ$, there are four prime divisors on $\Spec R$. 
Thus, the weights of characters corresponding to prime divisors can be described as 
$\beta_1,\cdots,\beta_4$ satisfying $\beta_1,\beta_2>0$, $\beta_3,\beta_4<0$ and $\beta_1+\beta_2=-(\beta_3+\beta_4)=r$. 
Thus, for a one-parameter subgroup $\lambda<0$, we have that $\langle\lambda,\beta_3\rangle>0$ and $\langle\lambda,\beta_4\rangle>0$. 
Then, by the same arguments shown in Section~\ref{sec_HibiZ2}, we have the exact sequence 
$$
C_{\lambda,a}\colon0\rightarrow V_{a+\beta_3+\beta_4}\otimes_\kk S\xrightarrow{\delta_2}(V_{a+\beta_3}\otimes_\kk S)\oplus (V_{a+\beta_4}\otimes_\kk S)\xrightarrow{\delta_1}V_{a}\otimes_\kk S. 
$$
Then, by Lemma~\ref{key_lem2}, we see that $\Hom_{(G,S)}(P_{\calL_a{\setminus}\{a\}}, \delta_1)$ is surjective. 
Also, we see that $a+\beta_3, a+\beta_4\in\calL_a{\setminus}\{a\}$. 
Thus, applying the functor $(-)^G$ to $C_{\lambda,a}$, we have the exact sequence 
$$
0\rightarrow M_{a+\beta_3+\beta_4}\xrightarrow{\delta_2^G}M_{a+\beta_3}\oplus M_{a+\beta_4}\xrightarrow{\delta_1^G}M_{a}, 
$$
and $\delta_1^G$ is a right ($\add_R\bigoplus_{i=1}^{r-1}M_{a-i}$)-approximation of $M_a$. 
Since $M_{a+\beta_3+\beta_4}=M_{a-r}$, we see that $M(a-1)=\bigoplus_{i=1}^{r}M_{a-i}$ is the mutation of $M(a)$ at $M_a$, 
and hence we can draw an edge between $M(a)$ and $M(a-1)$ for any $a\in\ZZ$. 
(Similarly, using $\lambda\in\rmY(G)_\RR$ with $\lambda>0$, we can see that $M(a)$ is the mutation of $M(a-1)$ at $M_{a-r}$, in which case we consider the sequence $C_{\lambda,a-r}$ using $\beta_1$ and $\beta_2$.) 

Since $M(a)$ is a generator if and only if $a=0,1,\cdots,r-1$, we have the exchange graph $\mathsf{EG}_0(R)$ as above. 
\end{proof}

\begin{example}

We consider the $3$-dimensional Gorenstein toric ring $R$ defined by the cone $\sigma$: 
\[
\sigma=\mathrm{Cone}\{\bfv_1=(1,-1,1), \bfv_2=(0,1,1), \bfv_3=(-1,0,1), \bfv_4=(-1,-1,1) \}. 
\]
As an element in $\Cl(R)$, we have that $\calD_2+2\calD_1=0$, $\calD_3-4\calD_1=0$, $\calD_4+3\calD_1=0$ where $\calD_i$ is the prime divisor corresponding to $\bfv_i$. 
Therefore, we have that $\Cl(R)=\langle\calD_1\rangle\cong\ZZ$, and each divisorial ideal is represented by $T(a)$ where $a\in\ZZ$. 
For this toric ring $R$, there is a unique consistent dimer model associated with $R$ (see Proposition~\ref{unique_dimer}), and that is the one given in Figure~\ref{ex_dimer} (see \cite[Subsection~5.5]{Nak1}). 
By Corollary~\ref{cor_splitting_dimer}, generators giving splitting NCCRs are $M(a)=T(-a)\oplus T(-a+1)\oplus\cdots\oplus T(-a+4)$ with $a=0,\cdots,4$. 
By Theorem~\ref{ex_graph_dimer}, we have the exchange graph $\mathsf{EG}_0(R)$ as follows. 
In the following figure, the double circle stands for the origin, and each point $a\in\ZZ$ corresponds to the module $T(a)$. 

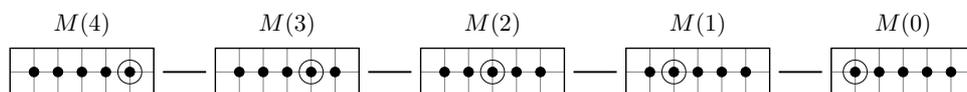
\begin{figure}[H]
\begin{center}
{\scalebox{0.9}{
\begin{tikzpicture}

\node  at (0,0.7) {$M(4)$}; 
\node (A1) at (0,0)
{\scalebox{0.35}{\begin{tikzpicture}
\draw [step=1,thin, gray] (0,-1) grid (6,1); 
\draw [ultra thick] (0,-1) rectangle (6,1);
\draw (5,0) circle [ultra thick, radius=0.5]; 
\filldraw [thick, fill=black] (5,0) circle [radius=0.2]; 
\filldraw  [thick, fill=black] (4,0) circle [radius=0.2] ;
\filldraw  [thick, fill=black] (3,0) circle [radius=0.2] ;
\filldraw  [thick, fill=black] (2,0) circle [radius=0.2] ;
\filldraw  [thick, fill=black] (1,0) circle [radius=0.2] ;
\end{tikzpicture}}};

\node at (3,0.7) {$M(3)$}; 
\node (A4) at (3,0)
{\scalebox{0.35}{\begin{tikzpicture}
\draw [step=1,thin, gray] (0,-1) grid (6,1); 
\draw [ultra thick] (0,-1) rectangle (6,1);
\draw (4,0) circle [ultra thick, radius=0.5]; 
\filldraw [thick, fill=black] (5,0) circle [radius=0.2]; 
\filldraw  [thick, fill=black] (4,0) circle [radius=0.2] ;
\filldraw  [thick, fill=black] (3,0) circle [radius=0.2] ;
\filldraw  [thick, fill=black] (2,0) circle [radius=0.2] ;
\filldraw  [thick, fill=black] (1,0) circle [radius=0.2] ;
\end{tikzpicture}}};

\node at (6,0.7) {$M(2)$}; 
\node (A2) at (6,0)
{\scalebox{0.35}{\begin{tikzpicture}
\draw [step=1,thin, gray] (0,-1) grid (6,1); 
\draw [ultra thick] (0,-1) rectangle (6,1);
\draw (3,0) circle [ultra thick, radius=0.5]; 
\filldraw [thick, fill=black] (5,0) circle [radius=0.2]; 
\filldraw  [thick, fill=black] (4,0) circle [radius=0.2] ;
\filldraw  [thick, fill=black] (3,0) circle [radius=0.2] ;
\filldraw  [thick, fill=black] (2,0) circle [radius=0.2] ;
\filldraw  [thick, fill=black] (1,0) circle [radius=0.2] ;
\end{tikzpicture}}};

\node at (9,0.7) {$M(1)$}; 
\node (A3) at (9,0)
{\scalebox{0.35}{\begin{tikzpicture}
\draw [step=1,thin, gray] (0,-1) grid (6,1); 
\draw [ultra thick] (0,-1) rectangle (6,1);
\draw (2,0) circle [ultra thick, radius=0.5]; 
\filldraw [thick, fill=black] (5,0) circle [radius=0.2]; 
\filldraw  [thick, fill=black] (4,0) circle [radius=0.2] ;
\filldraw  [thick, fill=black] (3,0) circle [radius=0.2] ;
\filldraw  [thick, fill=black] (2,0) circle [radius=0.2] ;
\filldraw  [thick, fill=black] (1,0) circle [radius=0.2] ;
\end{tikzpicture}}};

\node at (12,0.7) {$M(0)$}; 
\node (A0) at (12,0)
{\scalebox{0.35}{\begin{tikzpicture}
\draw [step=1,thin, gray] (0,-1) grid (6,1); 
\draw [ultra thick] (0,-1) rectangle (6,1);
\draw (1,0) circle [ultra thick, radius=0.5]; 
\filldraw [thick, fill=black] (5,0) circle [radius=0.2]; 
\filldraw  [thick, fill=black] (4,0) circle [radius=0.2] ;
\filldraw  [thick, fill=black] (3,0) circle [radius=0.2] ;
\filldraw  [thick, fill=black] (2,0) circle [radius=0.2] ;
\filldraw  [thick, fill=black] (1,0) circle [radius=0.2] ;
\end{tikzpicture}}};

\draw[thick] (A1)--(A4)--(A2)--(A3)--(A0) ;

\end{tikzpicture} }}
\caption{An example of the exchange graph $\mathsf{EG}_0(R)$}
\label{ex_exchange}
\end{center}
\end{figure}
\end{example}

\subsection*{Acknowledgement}
The author would like to thank Akihiro Higashitani for valuable discussions and comments, especially the proof of Lemma~\ref{poset_Z2} is based on his idea. 
The author also thanks \v{S}pela \v{S}penko for stimulating discussions concerning NCCRs. 
The author thank the anonymous referee for valuable comments and suggestions. 

The author is supported by World Premier International Research Center Initiative (WPI initiative), MEXT, Japan, and JSPS Grant-in-Aid for Young Scientists (B) 17K14159.


\end{document}